\renewcommand{\mod}{\operatorname{mod}\nolimits}
\newcommand{\Mod}{\operatorname{Mod}\nolimits}
\newcommand{\uMod}{\operatorname{\underline{Mod}}\nolimits}
\newcommand{\gr}{{\operatorname{gr}\nolimits}}
\newcommand{\Ass}{\operatorname{Ass}\nolimits}
\newcommand{\Hom}{\operatorname{Hom}\nolimits}
\newcommand{\uHom}{\operatorname{\underline{Hom}}\nolimits}
\renewcommand{\Im}{\operatorname{Im}\nolimits}
\newcommand{\Ker}{\operatorname{Ker}\nolimits}
\newcommand{\rad}{\operatorname{rad}\nolimits}
\newcommand{\rrad}{\mathfrak{j}}
\newcommand{\Ann}{\operatorname{Ann}\nolimits}
\newcommand{\Tot}{\operatorname{Tot}\nolimits}
\newcommand{\Ext}{\operatorname{Ext}\nolimits}
\newcommand{\chkExt}{\operatorname{\widehat{Ext}}\nolimits}
\newcommand{\op}{{\operatorname{op}\nolimits}}
\newcommand{\id}{{\operatorname{id}\nolimits}}
\newcommand{\mingen}{{\operatorname{mingen}\nolimits}}
\newcommand{\cx}{{\operatorname{cx}\nolimits}}
\newcommand{\comp}{\operatorname{\scriptstyle\circ}}
\newcommand{\fraka}{\mathfrak{a}}
\newcommand{\frakb}{\mathfrak{b}}
\newcommand{\frake}{\mathfrak{e}}
\newcommand{\frakg}{\mathfrak{g}}
\newcommand{\frakl}{\mathfrak{l}}
\newcommand{\frakm}{\mathfrak{m}}
\newcommand{\frakp}{\mathfrak{p}}
\newcommand{\frakq}{\mathfrak{q}}
\newcommand{\frakr}{\mathfrak{r}}
\newcommand{\End}{\operatorname{End}\nolimits}
\renewcommand{\L}{\Lambda}
\newcommand{\Z}{{\mathbb Z}}
\newcommand{\A}{{\mathcal A}}
\newcommand{\B}{{\mathcal B}}
\newcommand{\C}{{\mathcal C}}
\newcommand{\D}{{\mathcal D}}
\newcommand{\X}{{\mathcal X}}
\newcommand{\bfC}{{\mathbf C}}
\newcommand{\bfD}{{\mathbf D}}
\newcommand{\bfK}{{\mathbf K}}
\newcommand{\perf}{{\operatorname{perf}\nolimits}}
\newcommand{\extto}{\xrightarrow}
\newcommand{\Spec}{\operatorname{Spec}\nolimits}
\newcommand{\Supp}{\operatorname{Supp}\nolimits}
\newcommand{\Proj}{\operatorname{Proj}\nolimits}
\newcommand{\Thick}{\operatorname{Thick}\nolimits}
\newcommand{\MCM}{\operatorname{CM}\nolimits}
\newcommand{\uMCM}{\operatorname{\underline{CM}}\nolimits}
\newcommand{\HH}{\operatorname{HH}\nolimits}
\newcommand{\cone}{{\operatorname{cone}\nolimits}}
\newcommand{\kos}{/\!\!/} 
\theoremstyle{plain}
\newtheorem{lem}{Lemma}[section]
\newtheorem{prop}[lem]{Proposition}
\newtheorem{cor}[lem]{Corollary}
\newtheorem{thm}[lem]{Theorem}
\theoremstyle{definition}
\newtheorem{defin}[lem]{Definition}
\newtheorem{example}[lem]{Example}
\newtheorem{assumption}[lem]{Assumption}
\newtheorem*{ack}{Acknowledgement}
\theoremstyle{remark}
\newtheorem{remark}[lem]{Remark}
\begin{document}

\title{Support varieties -- an axiomatic approach}
\author[Buan]{Aslak Bakke Buan}
\address{Aslak Bakke Buan and \O yvind Solberg\\
Institutt for matematiske fag\\
NTNU\\ N--7491 Trondheim\\ Norway}
\email{aslak.buan \textrm{and} oyvind.solberg \textrm{at} ntnu.no}
\author[Krause]{Henning Krause}
\address{Henning Krause\\
Fakult\"at f\"ur Mathematik\\
Universit\"at Bielefeld\\
D-33501 Bielefeld\\
Germany} 
\email{hkrause@math.uni-bielefeld.de}
\author[Snashall]{Nicole Snashall}
\address{Nicole Snashall\\ 
Department of Mathematics\\
University of Leicester\\
University Road\\
Leicester, LE1 7RH\\
England}
\email{njs5@le.ac.uk}
\author[Solberg]{\O yvind Solberg}

\date{\today}

\begin{abstract}
  We provide an axiomatic approach for studying support varieties of
  objects in a triangulated category via the action of a tensor
  triangulated category, where the tensor product is not necessarily
  symmetric. This is illustrated by examples, taken in particular from
  the representation theory of finite dimensional algebras.
\end{abstract}

\maketitle
\tableofcontents

\section*{Introduction}
The main purpose of this paper is to present a common framework where
most of the existing occurrences of support varieties fit in. Examples
of such are support varieties for finite dimensional algebras, finite
groups, restricted finite dimensional Lie algebras, smooth algebraic
groups, finite group schemes, stable homotopy categories and complete
intersections. This paper is an early thought of, but late arriving,
companion of \cite{BKS}.  Some of the results were presented in
\cite{S,Sol}. 

An inspiration for this work have been the notes on axiomatic stable
homotopy theory by Hovey, Palmieri and Strickland \cite{HPS1997},
where tensor triangulated categories play a central role. There is
also the more recent approach of Balmer towards a support theory for
tensor triangulated categories \cite{Ba2005, Ba2010} and closely
related a theory of support via central ring actions \cite{BIK1}.  The
purpose of this paper is to point out (i) that then one often misses a
vital underlying structure, namely a tensor triangulated category
acting on the category where the theory of support is constructed and
(ii) that one obtains a central ring action from the graded
endomorphism ring of the tensor identity of the acting tensor
triangulated category.  This point of view has been taken successfully
by Stevenson in \cite{S1,S2}, but there the tensor triangulated
category acting has a symmetric tensor product.  This is not
necessarily true in our setting as our prime example is to consider
the category of bimodules over a finite dimensional algebra $\L$,
which are projective both as a left and as a right $\L$-module.

The pivotal results for conceiving a theory of support varieties in a
non\-com\-mu\-ta\-tive setting were shown in \cite{Ev,Go,V} around
1960, where the group cohomology ring of a finite group is shown to be
Noetherian, and further structural results of the cohomology ring were
obtained in \cite{Q} in 1971. Then in 1981 J.\ F.\ Carlson defined a
theory of support varieties for finitely generated modules over a
group algebra of a finite group (see \cite{Ca,Ca2}). These papers
define the genesis of a theory of support varieties considered in
noncommutative settings, as they have served as a motivation providing
the means to associate geometric data to algebraic structures.  We
also have to adopt similar finiteness conditions to obtain a proper
theory of support varieties in our setting, following ideas in
\cite{EHSST,SS}. 

An interesting source of examples are the stable module categories of
finite dimensional Hopf algebras.  The Hopf structure gives rise to a
tensor product which is not necessarily symmetric. In some cases,
results from the theory of cocommutative Hopf algebras carry over to
the noncommutative setting \cite{PW2009,PW2015}, while other examples
exhibit some new phenomena \cite{BW2014}.

The rough outline of this paper is as follows:
\S\S\ref{section:1}--\ref{section:2} are devoted to the foundations of
triangulated categories with a tensor action. In
\S\S\ref{section:3}--\ref{section:6} the basic properties of support
varieties are discussed.  The final
\S\S\ref{section:7}--\ref{section:9} present various classes of
examples.

\begin{ack}
The authors thank the Mathematisches Forschungsinstitut Oberwolfach
for the unique opportunity and for the support of our two Research in
Pairs stays in 2002 and 2004, which made this work possible. Some of
this work was presented at the Mini-Workshop `Support Varieties' at
Oberwolfach in February 2009 (see \cite{Sol}).
\end{ack}

\section{Tensor categories and actions}\label{section:1}
A category with a tensor product is called a \emph{monoidal} or a
\emph{tensor category} in the literature. This section is devoted to
recalling the definition of a tensor category and an action of a
tensor category on another category (see \cite{JK,K}). The examples we
have in mind are mostly triangulated categories, in particular those
equipped with a suspension. Even though some of our results only
depend on having suspended categories, in the main results we are
assuming the presence of a triangulated structure.  Therefore we focus
throughout this paper on triangulated tensor categories and actions of
such on triangulated categories. We end the section by reviewing our
arsenal of examples of triangulated tensor categories and actions of
these.

Recall that an additive category $\C$ is a \emph{tensor category} if
$\C$ carries an additional structure
$(\C,\otimes,\frake,\fraka,\frakl,\frakr)$, where $-\otimes-\colon
\C\times\C \to \C$ is an additive bifunctor, $\frake$ is an object in
$\C$, and $\fraka \colon (-\otimes-)\otimes-\to -\otimes(-\otimes-)$
is an isomorphism of functors $\C\times\C\times\C \to \C$.
Furthermore, $\frakl \colon \frake\otimes-\to -$ and $\frakr\colon
-\otimes \frake \to -$ are isomorphisms of functors $\C\to \C$ making
the following diagrams commute for all objects $x$, $y$, $z$ and $w$
in $\C$: (Pentagon Axiom)
\[\xymatrix{
((x\otimes y)\otimes z)\otimes w
\ar[r]^{\fraka}\ar[d]^{\fraka\otimes 1} &
(x\otimes y)\otimes (z\otimes w) \ar[r]^{\fraka} &
x\otimes (y\otimes (z\otimes w)) \\
(x\otimes (y\otimes z))\otimes w \ar[rr]^{\fraka} & & 
x\otimes((y\otimes z)\otimes w) \ar[u]^{1\otimes \fraka} }\]
and (Triangle Axiom) 
\[\xymatrix{
(x\otimes \frake)\otimes y \ar[rr]^\fraka\ar[dr]_{\frakr\otimes 1} &
& 
x\otimes (\frake\otimes y)\ar[dl]^{1\otimes \frakl} \\
& x\otimes y & }\]
Recall that 
\begin{align}
\frakl_{\frake\otimes x} & = 1_\frake\otimes\frakl_x,\notag\\
\frakr_{x\otimes\frake} & = \frakr_x\otimes 1_\frake,\notag\\
\frakl_\frake & =\frakr_\frake \colon \frake\otimes\frake\to
\frake\notag
\end{align}
from (\cite[Lemma XI.2.3]{K}).  A \emph{suspended} category is a
category $\D$ equipped with an autoequivalence $T\colon \D\to \D$.

Now we recall the definition of a triangulated tensor category.  A
\emph{triangulated} tensor category (\cite{SA}) is a tensor category
$(\C,\otimes,\frake,\fraka,\frakl,\frakr)$ and at the same time a
triangulated category with a suspension $T\colon \C\to \C$, where
there exist isomorphisms of functors $\lambda\colon -\otimes T(-)\to
T(-\otimes-)$ and $\rho\colon T(-)\otimes-\to T(-\otimes-)$ from
$\C\times\C\to \C$ making the following diagrams commutative
\[\xymatrix{
\frake\otimes T(x) \ar[r]^\frakl\ar[d]^\lambda & T(x)\ar@{=}[d] \\
T(\frake\otimes x) \ar[r]^{T(\frakl)} & T(x)}\qquad
\xymatrix{
T(x)\otimes \frake \ar[d]^\rho\ar[r]^\frakr & T(x)\ar@{=}[d]\\
T(x\otimes \frake) \ar[r]^{T(\frakr)} & T(x)}\]
and the following diagram anti-commutative
\[\xymatrix{
T(x)\otimes T(y) \ar[r]^{\rho_{x,T(y)}} \ar[d]^{\lambda_{T(x),y}} &
T(x\otimes T(y))\ar[d]^{T(\lambda_{x,y})} \\
T(T(x)\otimes y)\ar[r]^{T(\rho_{x,y})} & T^2(x\otimes y)}\]
for all objects $x$ and $y$ in $\C$. 

By an action of a tensor category on a category we mean the
following.  Let $(\C,\otimes,\frake,\fraka,\frakl,\frakr)$ be a tensor
category, and let $\A$ be a category. An \emph{action} of $\C$ on $\A$
is defined by the following data (see \cite{JK}):
\begin{enumerate}[\rm(i)]
\item An additive bifunctor $-*-\colon \C\times \A\to \A$, 
\item A natural isomorphism $\alpha_{x,y,a}\colon (x\otimes y)*a
\to x*(y*a)$ for all $x$ and $y$ in $\C$ and $a$ in $\A$, 
\item A natural isomorphism $\frakl'_a\colon \frake* a\to a$
for all $a$ in $\A$, 
\end{enumerate}
where these satisfy the following commutative diagrams:
\[\xymatrix{
((x\otimes y)\otimes z)* a
\ar[r]^{\alpha}\ar[d]^{\fraka* 1} &
(x\otimes y)*(z* a) \ar[r]^{\alpha} &
x*(y*(z*a))\\
(x\otimes (y\otimes z))*a \ar[rr]^{\alpha} & & 
x*((y\otimes z)*a) \ar[u]^{1*\alpha}}\]

\[\xymatrix{
(\frake\otimes x)* a\ar[rr]^{\alpha_{\frake,x,a}}\ar[dr]_{\frakl * 1} &
& 
\frake*(x*a)\ar[dl]^{\frakl'_{x*a}} \\
& x * a & }\]
and
\[\xymatrix{
(x\otimes \frake)* a\ar[rr]^{\alpha_{x,\frake,a}}\ar[dr]_{\frakr *1} &
& 
x*(\frake *a)\ar[dl]^{1*\frakl'_a} \\
& x * a & }\]
for all $x$, $y$ and $z$ in $\C$ and $a$ in $\A$. Using that
$\frakl_\frake= \frakr_\frake$ one obtains immediately from the above
axioms that $\frakl'_{\frake*a}=1_\frake*\frakl'_a\colon
\frake*(\frake*a)\to \frake*a$ for all objects $a$ in $\A$. 

Finally we recall the definition of an action of a triangulated tensor
category on a triangulated category. Let
$(\C,\otimes,\frake,\fraka,\frakl,\frakr,T,\lambda,\rho)$ be a
triangulated tensor category, and let $\A=(\A,\Sigma)$ be a
triangulated category. Then we define an \emph{action} of $\C$ on $\A$
to be
\begin{enumerate}[\rm(i)]
\item a functor $-*-\colon \C\times \A\to \A$, a natural
isomorphism $\alpha_{x,y,a}$ and a natural isomorphism $\frakl'_a$ for
all $x$ and $y$ in $\C$ and $a$ in $\A$ as above, such that 
\item there exist isomorphisms $\lambda'$ and $\rho'$ between
  the functors 
\[\lambda'\colon -*\Sigma(-) \to \Sigma(-*-)\]
and 
\[\rho'\colon T(-)*- \to \Sigma(-*-)\]
when viewed as bifunctors from $\C\times\A$ to $\A$, and such that 
\item the diagram 
\[\xymatrix{
e*\Sigma(a) \ar[r]^{\frakl'_{\Sigma(a)}}\ar[d]^{\lambda'} & 
\Sigma(a)\ar@{=}[d]\\
\Sigma(e*a)\ar[r]^{\Sigma(\frakl'_a)} & \Sigma(a)}\]
commutes for all $a$ in $\A$, and such that 
\item there is an anti-commutative diagram 
\[\xymatrix{
T(x)*\Sigma(a) \ar[d]^{\lambda'_{T(x),a}}\ar[r]^{\rho'_{x,\Sigma(a)}} &
\Sigma(x*\Sigma(a)) \ar[d]^{\Sigma(\lambda'_{x,a})} \\
\Sigma(T(x)*a)\ar[r]^{\Sigma(\rho'_{x,a})} & \Sigma^2(x*a)}\]
for all $x$ in $\C$ and $a$ in $\A$. 
\end{enumerate}
\begin{remark}
Let $\C=(\C,\otimes,\frake,\fraka,\frakl,\frakr,T,\lambda,\rho)$ be a
triangulated tensor category. Then it follows directly from the
definition, that there is an action of $\C$ on $\C$ by letting
$-*-=-\otimes -$, $\alpha=\fraka$, $\frakl'=\frakl$,
$\lambda'=\lambda$ and $\rho'=\rho$. Also note that we do not assume
any exactness properties of the tensor product $-\otimes-$ in either
of the variables. It is only the graded structure through the shift in
the triangulated categories that is crucial for Section~\ref{section:2}.
\end{remark}

We end this section by giving some examples of triangulated tensor
categories with actions on triangulated categories. To do this it is
convenient to point out some elementary general facts about categories
of complexes.

Let $R$ be a ring. Denote by $\bfC(R)$ and $\bfC(\mod R)$ the category
of complexes of all left $R$-modules and all finitely presented left 
$R$-modules, respectively. The tensor product gives rise to a functor
$\bfC(R^\op)\times \bfC(R)\to \bfC(\Z)$ via the total complex. Our
conventions for the signs are the following. The shift of a complex
$X$ is given by $X[p]^n=X^{n-p}$ and $d_{X[p]}=(-1)^p d_{X}$ for any
integer $p$ in $\Z$. For a morphism $f\colon X\to Y$ of complexes
$f[p]^n=f^{n-p}$. Given a complex $X$ in $\bfC(R^\op)$ and a complex
$Y$ in $\bfC(R)$, the total complex $\Tot(X,Y)=X\otimes_R Y$ has
$(X\otimes_R Y)^n =\amalg_{i\in\Z} X^i\otimes_R Y^{n-i}$ with 
differential $d^n\colon (X\otimes_R Y)^n\to (X\otimes_R Y)^{n+1}$
given by $x^i\otimes y^{n-i}\mapsto d_X(x^i)\otimes y^{n-i} +
(-1)^ix^i\otimes d_Y(y^{n-i})$.

By abuse of notation let $R$ also denote the stalk complex with $R$
concentrated in degree zero. The multiplication maps $R\otimes_R M\to
M$ and $N\otimes_R R\to N$ for an $R$-module $M$ and an $R^\op$-module
$N$ induce natural isomorphisms $\frakr\colon X\otimes_R R\to X$ and
$\frakl\colon R\otimes_R Y\to Y$ for all complexes $X$ in
$\bfC(R^\op)$ and all complexes $Y$ in $\bfC(R)$. Hence $R$ is the
tensor identity in $\bfC(R)$.

Define $\lambda\colon X\otimes_R Y[-1]\to (X\otimes_R
Y)[-1]$ by letting 
\[\lambda^n=\amalg_{i\in\Z} (-1)^i \id_{X^i}\otimes
\id_{Y^{n-i+1}}\colon (X\otimes_R Y[-1])^n\to (X\otimes_R Y)[-1]^n.\]
Let $\rho\colon X[-1]\otimes_R Y\to (X\otimes_R Y)[-1]$ be given by 
\[\rho^n=\amalg_{i\in\Z}\id_{X^{i+1}}\otimes 
\id_{Y^{n-i}}\colon (X[-1]\otimes_R Y)^n\to (X\otimes_R Y)[-1]^n.\]
Both of these maps are isomorphisms, which are natural in each
variable. We leave it to the reader to check that the diagrams 
\[\xymatrix{
R\otimes_R X[-1] \ar[r]^\frakl\ar[d]^\lambda & X[-1]\ar@{=}[d] \\
(R\otimes_R X)[-1] \ar[r]^(.6){\frakl[-1]} & X[-1]}\qquad
\xymatrix{
X[-1]\otimes_R R \ar[d]^\rho\ar[r]^\frakr & X[-1]\ar@{=}[d]\\
(X\otimes_R R)[-1] \ar[r]^(.6){\frakr[-1]} & X[-1]}\]
are commutative 
and that the following diagram
\[\xymatrix@C=35pt{
X[-1]\otimes_R Y[-1] \ar[r]^(.48){\rho_{X,Y[-1]}} \ar[d]^{\lambda_{X[-1],Y}} &
(X\otimes_R Y[-1])[-1]\ar[d]^{\lambda_{X,Y}[-1]} \\
(X[-1]\otimes_R Y)[-1]\ar[r]^(.52){\rho_{X,Y}[-1]} & (X\otimes_R Y)[-2]}\]
is anti-commutative.

If we are in a setting where $(X\otimes_R Y)\otimes_S Z$ and
$X\otimes_R(Y\otimes_S Z)$ are defined, then there is an associativity
isomorphism between them induced by the associativity isomorphism for
tensor products of modules.  In addition the tensor product
$\bfC(S^\op)\times \bfC(S)\to \bfC(\Z)$ given by the total tensor
product over $S$ sends null homotopic maps to null homotopic maps,
such that the tensor product induces a functor $\bfK(S^\op)\times
\bfK(S)\to \bfK(\Z)$.  Here $\bfK(S)$ and $\bfK(\mod S)$ denote the
homotopy category of complexes of all left $S$-modules and all
finitely presented left $S$-modules over the ring $S$, respectively.
Having this in mind it is easy to check that the other requirements
for a triangulated tensor category are satisfied in the following
examples.
\begin{example}\label{exam:1}
  Let $R$ be a commutative ring. Then $\bfK(R)$ is a triangulated
  tensor category with the tensor product induced by the total tensor
  product over $R$, and with $\frake=R$, $\fraka$, $\frakl$, $\frakr$,
  $T$, and $\lambda$ and $\rho$ given as above. This gives rise to an
  action of $\bfK(R)$ on $\bfK(R)$.
\end{example}

\begin{example}\label{exam:2}
  Let $G$ be a finite group, and let $k$ be a field. Then $\bfK(kG)$
  is a triangulated tensor category with the tensor product induced by
  the total tensor product over $k$, and with $\frake=k$, $\fraka$,
  $\frakl$, $\frakr$, $T$, and $\lambda$ and $\rho$ given as
  above. Consequently there is an action of $\bfK(kG)$ on $\bfK(kG)$.
\end{example} 

\begin{example}\label{exam:3}
  An easy generalization of the above example is to consider a finite
  dimensional Hopf algebra $H$ over a field $k$. Then $\bfK(H)$ is a
  triangulated tensor category with the same choice of structures as
  for the group ring case. Hence there is an action of $\bfK(H)$ on
  $\bfK(H)$.
\end{example}

\begin{example}\label{exam:4}
  Let $\L$ be an algebra over a commutative ring $k$. Let
  $\L^e=\L\otimes_k \L^\op$ be the enveloping algebra of $\L$. Then
  $\bfK(\L^e)$ is a triangulated tensor category with the tensor
  product induced by the total tensor product over $\L$, and with
  $\frake=\L$, $\fraka$, $\frakl=\frakr$, $T$, and $\lambda$ and
  $\rho$ given as above. As above this gives rise to an action of
  $\bfK(\L^e)$ on $\bfK(\L^e)$. Furthermore, we obtain an action of
  $\bfK(\L^e)$ on $\bfK(\L)$ in a natural way.
\end{example}

In the examples $\bfK(kG)$ and $\bfK(H)$ the tensor product
$\otimes_k$ in $\Mod kG$ and $\Mod H$ is exact, so that the tensor
product of a complex with an acyclic complex is always an acyclic
complex again, or equivalently tensoring with a fixed complex
preserves quasi-isomorphisms. It follows from this that the tensor
product in the homotopy categories induces a tensor product on the
derived categories $\bfD(kG)$ and $\bfD(H)$. In addition, this induces
a triangulated tensor structure on $\bfD(kG)$ and $\bfD(H)$.

The situation is different for $\bfK(\L^e)$. Here, we restrict
to the full subcategory $\B$ in $\Mod\L^e$ (or $\mod\L^e$) consisting
of those $\L^e$-modules which are projective over $\L$ and
  $\L^\op$.  Then the tensor product $-\otimes_\L-$ is exact on
  $\B$. Let $\C=\bfD^{b}(\B)$ be the full subcategory of $\bfD(\L^e)$
  generated by all complexes of modules in $\B$ with bounded
  homology. Similarly as above, the tensor product $-\otimes_\L-$
  induces a tensor product on $\C$ making it a triangulated tensor
  category with the tensor structure induced from $\bfK(\L^e)$. This
  also gives rise to an action of $\C$ on $\bfD(\L)$, $\bfD^-(\mod\L)$
  and $\bfD^{b}(\mod\L)$.

The derived tensor product $-\otimes_\L^{\mathbb{L}} -$ on
$\bfD(\L^e)$ given by $X\otimes_\L p(Y)$ where $p(Y)\to Y$ is a
quasi-isomorphism and $p(Y)$ is a complex of projective modules, makes
$\bfD(\L^e)$ into a triangulated tensor category. Similarly, if $Y$ is
in $\bfD(\L)$, the derived tensor product $X\otimes_\L^{\mathbb{L}} Y$
yields an action of $\bfD(\L^e)$ on $\bfD(\L)$.

As above, for a commutative Noetherian ring $R$ the derived tensor
product $-\otimes_R^{\mathbb{L}}-$ on $\bfD(R)$ makes $\bfD(R)$ into a
triangulated tensor category. Consider the full subcategory
$\bfD^\perf(R)$ of perfect complexes and $\bfD^b(\mod R)$ under this
action. It is easy to see that the above action restricts to an action
of $\bfD^\perf(R)$ on $\bfD^b(\mod R)$, where the tensor product is
given by taking the total tensor product over $R$.

\begin{example}\label{exam:5}
  For a selfinjective algebra $\L$ let $\uMod\L$ denote the category
  $\Mod\L$ modulo the morphisms factoring through projective modules.
  This is a triangulated category with suspension given by the first
  negative syzygy, $\Omega^{-1}_\L$. In the stable categories $\uMod
  kG$ or $\uMod H$ there is an induced tensor product by the Hopf
  structure and since $P\otimes_k M$ and $M\otimes_k P$ are projective
  modules whenever $P$ is a projective module and $M$ is any
  module. In addition this tensor product induces exact functors
  (triangle functors) for a fixed object in each of the variables of
  the tensor product.  Hence we obtain that the stable categories
  $\uMod kG$ and $\uMod H$ are triangulated tensor categories.
\end{example}

\begin{example}\label{exam:6}
  Let $\L$ be a finite dimensional selfinjective algebra over a field
  $k$. Again let $\B$ denote the full subcategory of $\Mod\L^e$
  consisting of the bimodules projective as modules on either
  side. Since $P\otimes_\L B$ and $B\otimes_\L P$ are projective
  $\L^e$-modules whenever $P$ is a projective $\L^e$-module and $B$ is
  in $\B$, the tensor product $-\otimes_\L-$ also induces a tensor
  product on the stable category $\underline{\B}$ as a full
  subcategory of $\uMod\L^e$. As above the category $\underline{\B}$
  becomes a triangulated tensor category.
\end{example}

\begin{example}\label{exam:7}
  Let $\mathfrak S_d$ be the symmetric group permuting $d$ elements
  and let $k$ be a field. Let $n\ge d$ and set $V=k^n$. Then the Schur
  algebra $S_k(n,d)$ is by definition the endomorphism algebra
  $\End_{k\mathfrak S_d}(V^{\otimes d})$ and there exists an
  idempotent $e$ in $S_k(n,d)$ such that
  $e S_k(n,d)e\cong k\mathfrak S_d$. Multiplying with $e$ yields the
  \emph{Schur functor} $\mod S_k(n,d)\to\mod k\mathfrak S_d$ (see
  \cite{Gr1980}).

  The category $\mod S_k(n,d)$ carries a (not necessarily exact)
  symmetric tensor product \cite{Kr2013}. On the other hand,
  $\mod k\mathfrak S_d$ is a tensor category via $-\otimes_k -$ with
  the diagonal group action.  The Schur functor preserves the tensor
  product \cite{AR2014} and this yields an exact functor
  $\bfD^b(\mod S_k(n,d))\to\bfD^b(\mod k\mathfrak S_d)$ between
  triangulated tensor categories. In fact, it is a triangulated
  quotient functor \cite[Lemma 1.15]{Keller}. Thus the known
  classification of thick tensor ideals of
  $\bfD^b(\mod k\mathfrak S_d)$ via homogeneous prime ideals of the
  cohomology ring $H^*(\mathfrak S_d,k)$ (see \cite{BCR1997}) embeds
  into the presently unknown classification for
  $\bfD^b(\mod S_k(n,d))$.
\end{example}

\section{The endomorphism ring of the tensor identity}\label{section:2}
The endomorphism ring of the tensor identity in a suspended tensor
category was considered in \cite{SA} and shown to be
graded-commutative. Any homomorphism of graded rings from a positively
graded and graded-commutative ring $R$ to the graded centre of a
triangulated category is shown to give rise to a theory of support
varieties (see \cite{AI,BIK1,BIK2,BIKO}). This is called a
\emph{central ring action} of the graded ring $R$ on the triangulated
category $\A$.

This section is devoted to showing that there is a homomorphism of
graded rings from the graded endomorphism ring of the tensor identity
$\frake$ in a triangulated tensor category $\C$ to the graded centre
of a triangulated category $\A$ on which $\C$ is acting.  Hence it
gives rise to a central ring action on $\A$.

Let $\C=(\C,\otimes,\frake,\fraka,\frakl,\frakr,T,\lambda,\rho)$ be a
triangulated tensor category acting on a triangulated category
$\A=(\A,\Sigma)$.  Consider the graded endomorphism ring
$\End^*_\C(\frake)=\amalg_{p\in\Z}\Hom_\C(\frake, T^p(\frake))$ of the
tensor identity in $\C$, which clearly is a naturally $\Z$-graded ring
with multiplication given as follows: If $h\colon \frake\to
T^p(\frake)$ and $h'\colon \frake\to T^q(\frake)$, then
\[h\cdot h'=T^q(h)\comp h'\colon \frake\to T^{p+q}(\frake).\] Recall
that the \emph{graded centre} $Z^*(\A)$ of $\A$ is defined as the
graded ring which in degree $p$ in $\Z$ consists of all natural
transformations $z\colon \id_\A\to \Sigma^p$ such that
$\Sigma z=(-1)^p z\Sigma$ (see \cite{BF}). We want to define a
homomorphism of graded rings from $\End_\C^*(\frake)$ to $Z^*(\A)$. To
this end we need to study the induced isomorphisms
$x*\Sigma^p(a)\to \Sigma^p(x*a)$ and $T^p(x)*a\to \Sigma^p(x*a)$ for
all integers $p$.  Let $\lambda'_0$ and $\rho'_0$ be the identity
transformation of the functor $-*-\colon \C\times\A\to \A$. For $p>0$
let
\[\lambda'_p=\Sigma^{p-1}(\lambda')\comp\Sigma^{p-2}(\lambda')\comp\cdots
\comp\Sigma(\lambda')\comp \lambda'\colon -*\Sigma^p(-)\to \Sigma^p(-*-)\]
and
\[\rho'_p=\Sigma^{p-1}(\rho')\comp\Sigma^{p-2}(\rho')\comp\cdots
\comp\Sigma(\rho')\comp \rho'\colon T^p(-)*-\to \Sigma^p(-*-).\]
In particular, 
\[(\lambda'_p)^{-1}\colon \Sigma^p(-*\Sigma^{-p}(-))\to
-*\Sigma^p\Sigma^{-p}(-) \simeq -*-\]
when $(\lambda'_p)^{-1}$ is starting in $\Sigma^p(-*\Sigma^{-p}(-))$, 
and therefore
\[\Sigma^{-p}((\lambda'_p)^{-1})\colon -*\Sigma^{-p}(-)\to
\Sigma^{-p}(-*-)\] 
for $p>0$. Let $\lambda'_{-p}=\Sigma^{-p}((\lambda'_p)^{-1})$
for $p>0$. Similarly let 
\[\rho'_{-p}=\Sigma^{-p}((\rho'_p)^{-1})\colon T^{-p}(-)*-\to
\Sigma^{-p}(-*-)\] 
for $p>0$. With these definitions it is easy to check that
\begin{equation}\label{eq:leftidaction}
\Sigma^p(\frakl')\comp \lambda'_p =\frakl'\colon e*\Sigma^p(-)\to
\Sigma^p(-) 
\end{equation}
and 
\begin{equation}\label{eq:gradedcommactions}
\Sigma^q(\lambda'_p)\comp \rho'_q = (-1)^{pq} \Sigma^p(\rho_q')\comp
\lambda'_p \colon T^q(-)*\Sigma^p(-)\to \Sigma^{p+q}(-*-)
\end{equation}
for all integers $p$ and $q$. This last relation corresponds to the
diagram 
\[\xymatrix{
T^q(x)*\Sigma^p(a) \ar[d]^{\lambda'_p}\ar[r]^{\rho'_q} &
\Sigma^q(x*\Sigma^p(a)) \ar[d]^{\Sigma^q(\lambda'_p)} \\
\Sigma^p(T^q(x)*a)\ar[r]^{\Sigma^p(\rho'_q)} & \Sigma^{p+q}(x*a)}\]
being commutative up to the sign $(-1)^{pq}$. 

Let $h\colon \frake\to T^p(\frake)$ be a degree $p$ element in
$\End_\C^*(\frake)$. Then consider the following composition of
natural transformations of functors  
\[\id_\A\extto{\frakl'^{-1}} \frake*-\extto{h*1} T^p(\frake)*-
\extto{\rho_p'} \Sigma^p(\frake*-)\extto{\Sigma^p(\frakl')}
\Sigma^p(-),\] 
which we denote by $\varphi_\A(h)$.  We show that $\varphi_\A$
gives rise to a homomorphism of graded rings
$\varphi_\A\colon \End_\C^*(\frake)\to Z^*(\A)$.
\begin{prop}
The map $\varphi_\A\colon \End_\C^*(\frake)\to Z^*(\A)$ is a homomorphism
of graded rings. 
\end{prop}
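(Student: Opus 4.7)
The plan is to verify in turn: (i) that each $\varphi_\A(h)$ is a natural transformation $\id_\A\to\Sigma^p$ lying in the graded centre $Z^p(\A)$; (ii) that $\varphi_\A$ is additive and sends $1_\frake$ to the identity; and (iii) that $\varphi_\A$ respects multiplication. Naturality of $\varphi_\A(h)$ in $a\in\A$ is immediate, since each of the four constituent morphisms $(\frakl'_a)^{-1}$, $h*1_a$, $\rho'_{p,\frake,a}$, and $\Sigma^p(\frakl'_a)$ is natural in $a$. Additivity in $h$ follows from biadditivity of $*$ and composition. When $h=1_\frake$ the conventions $\rho'_0=\id$ and $\Sigma^0=\id$ collapse the composite to $\frakl'_a\comp(\frakl'_a)^{-1}=\id_a$.

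The main step is the graded centre sign relation $\Sigma\varphi_\A(h)=(-1)^p\varphi_\A(h)\Sigma$. The plan is to expand $\Sigma(\varphi_\A(h)_a)$ using the definition and then transform it into $(-1)^p\varphi_\A(h)_{\Sigma a}$ via three ingredients. First, naturality of $\lambda'=\lambda'_1$ in the first $\C$-variable applied to $h\colon\frake\to T^p(\frake)$ rewrites $\Sigma(h*1_a)$ as $\lambda'_{T^p(\frake),a}\comp(h*1_{\Sigma a})\comp(\lambda'_{\frake,a})^{-1}$. Second, relation (\ref{eq:gradedcommactions}) supplies the identity $\Sigma(\rho'_{p,\frake,a})\comp\lambda'_{T^p(\frake),a}=(-1)^p\Sigma^p(\lambda'_{\frake,a})\comp\rho'_{p,\frake,\Sigma a}$, which is where the sign $(-1)^p$ enters. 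Third, relation (\ref{eq:leftidaction}) in the case $p=1$, namely $\frakl'_{\Sigma a}=\Sigma(\frakl'_a)\comp\lambda'_{\frake,a}$, lets one collapse the outer factors $\Sigma^{p+1}(\frakl'_a)\comp\Sigma^p(\lambda'_{\frake,a})$ into $\Sigma^p(\frakl'_{\Sigma a})$ and $(\lambda'_{\frake,a})^{-1}\comp\Sigma(\frakl'_a)^{-1}$ into $(\frakl'_{\Sigma a})^{-1}$, yielding exactly $(-1)^p\varphi_\A(h)_{\Sigma a}$.

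For multiplicativity, take $h$ of degree $p$ and $h'$ of degree $q$, so that $h\cdot h'=T^q(h)\comp h'$ and $(\varphi_\A(h)\cdot\varphi_\A(h'))_a=\Sigma^q(\varphi_\A(h)_a)\comp\varphi_\A(h')_a$ (the analogous convention in $Z^*(\A)$). Expanding the right-hand side, an inner pair $\Sigma^q(\frakl'_a)^{-1}\comp\Sigma^q(\frakl'_a)$ cancels. Naturality of $\rho'_q$ in the first $\C$-variable at $h$ then converts $\Sigma^q(h*1_a)\comp\rho'_{q,\frake,a}$ into $\rho'_{q,T^p(\frake),a}\comp(T^q(h)*1_a)$, and the identity $\Sigma^q(\rho'_{p,\frake,a})\comp\rho'_{q,T^p(\frake),a}=\rho'_{p+q,\frake,a}$, which is immediate from the iterative definition of $\rho'_r$, glues the two $\rho'$-factors into one. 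The remaining composite is precisely the expansion of $\varphi_\A(T^q(h)\comp h')_a=\varphi_\A(h\cdot h')_a$. The sign step above is the only non-routine point; every other calculation is bookkeeping with naturality and the coherence axioms of a triangulated tensor action.
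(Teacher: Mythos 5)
Your argument is correct and follows essentially the same route as the paper: the sign relation $\Sigma\varphi_\A(h)=(-1)^p\varphi_\A(h)\Sigma$ is established by chasing the same four-square diagram the paper draws, with relation \eqref{eq:leftidaction} handling the outer squares, naturality of $\lambda'$ the second, and \eqref{eq:gradedcommactions} supplying the $(-1)^p$ in the third. You additionally spell out the multiplicativity check, which the paper dismisses as ``straightforward''; your use of naturality of $\rho'_q$ in the $\C$-variable together with the telescoping identity $\Sigma^q(\rho'_p)\comp\rho'_q=\rho'_{p+q}$ is exactly the intended calculation.
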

\begin{proof}
We need to show that $\Sigma\varphi_\A(h) = (-1)^p\varphi_\A(h)\Sigma$ for
$h\colon \frake\to T^p(\frake)$. Consider the following diagram 
\[\xymatrix@C=35pt{
\Sigma \ar[r]^-{\Sigma(\frakl'^{-1})}\ar@{=}[d] & 
\Sigma(\frake*-)\ar[r]^-{\Sigma(h*1)}\ar[d]^-{\lambda'^{-1}} & 
\Sigma(T^p(\frake)*-)\ar[r]^-{\Sigma(\rho_p')}\ar[d]^{\lambda'^{-1}} & 
\Sigma(\Sigma^p(\frake*-))\ar[r]^-{\Sigma^{p+1}(\frakl')}
\ar[d]^-{\Sigma^p(\lambda'^{-1})} & 
\Sigma\Sigma^p(-)\\
\Sigma\ar[r]^-{\frakl'^{-1}} & 
\frake*\Sigma(-)\ar[r]^-{h*1} & 
T^p(\frake)*\Sigma(-)\ar[r]^-{\rho_p'} & 
\Sigma^p(\frake*\Sigma(-))\ar[r]^-{\Sigma(\frakl')} & 
\Sigma^p\Sigma(-)\ar@{=}[u]
}\]
The leftmost and the rightmost squares commute due to
\eqref{eq:leftidaction}. The second square commutes since $\lambda'$
is a morphism of functors. The third square commutes up to the sign
$(-1)^p$ by \eqref{eq:gradedcommactions}. Hence it follows that
$\varphi_\A(h)$ is in $Z^*(\A)$.  It is straightforward to check that
$\varphi_\A$ is a homomorphism of graded rings.
\end{proof}
Let 
\[\Hom_\A^*(a,b)=\amalg_{p\in\Z}\Hom_\A(a,\Sigma^p(b))\] 
for any objects $a$ and $b$ in $\A$, and let
$\End_\A^*(a)=\Hom_\A^*(a,a)$. The homomorphism set $\Hom_\A^*(a,b)$
is endowed with a left and a right module structure from
$\End_\A^*(b)$ and $\End_\A^*(a)$, respectively. For each object $a$
in $\A$ the evaluation at $a$ induces a homomorphism of graded rings
$\gamma_a\colon Z^*(\A)\to \End_\A^*(a)$ given by
$\gamma_a(\eta)=\eta_a\colon a\to \Sigma^p(a)$ for $\eta\colon
\id_\A\to \Sigma^p$ in $Z^*(\A)$. Then $\Hom_\A^*(a,b)$ has a left and
a right $Z^*(\A)$-module structure via the ring homomorphisms 
$\gamma_b$ and $\gamma_a$ respectively. For completeness we recall the
following.

\begin{prop}\label{prop:gradcomm}
The action of $Z^*(\A)$ on the right and on the left of
$\Hom^*_\A(a,b)$ for $a$ and $b$ in $\A$ satisfies, for $\eta\colon
\id_\A\to \Sigma^p$ in $Z^*(\A)$ and $f\colon a\to \Sigma^q b$ in
$\Hom^*_\A(a,b)$, the following equality
\[ \eta\cdot f = (-1)^{pq}f\cdot \eta.\]
\end{prop}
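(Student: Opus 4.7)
The plan is to reduce everything to the naturality square of $\eta$ and the defining relation $\Sigma\eta = (-1)^p \eta\Sigma$ for elements of the graded centre. First I would unpack the two actions. The left action is $\eta\cdot f = \gamma_b(\eta)\cdot f = \Sigma^q(\eta_b)\circ f$, viewed as a map $a \to \Sigma^q b \to \Sigma^{q+p}b$, while the right action is $f\cdot \eta = f\cdot \gamma_a(\eta) = \Sigma^p(f)\circ \eta_a$, viewed as $a \to \Sigma^p a \to \Sigma^p\Sigma^q b$. So what must be shown is the single identity
\[
\Sigma^q(\eta_b)\circ f \;=\; (-1)^{pq}\,\Sigma^p(f)\circ \eta_a \colon a \longrightarrow \Sigma^{p+q}b.
\]

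Next, the naturality of the transformation $\eta\colon \id_\A \to \Sigma^p$ applied to the morphism $f\colon a \to \Sigma^q b$ immediately yields the commutative square
\[
\Sigma^p(f)\circ \eta_a \;=\; \eta_{\Sigma^q b}\circ f.
\]
So the whole statement reduces to comparing $\eta_{\Sigma^q b}$ with $\Sigma^q(\eta_b)$. This is where the graded-centre condition enters: by definition, $\Sigma(\eta_x) = (-1)^p \eta_{\Sigma x}$ for every object $x$, equivalently $\eta_{\Sigma x} = (-1)^p\Sigma(\eta_x)$.

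Iterating this relation $q$ times (with sign $(-1)^p$ accumulated at each step, negative values of $q$ handled by using the same identity for $\Sigma^{-1}$) gives the key formula
\[
\eta_{\Sigma^q b} \;=\; (-1)^{pq}\,\Sigma^q(\eta_b).
\]
Combining this with the naturality identity above yields $\Sigma^p(f)\circ \eta_a = (-1)^{pq}\Sigma^q(\eta_b)\circ f$, which is exactly $f\cdot\eta = (-1)^{pq}\eta\cdot f$, equivalent to the desired $\eta\cdot f = (-1)^{pq}f\cdot\eta$. There is no real obstacle here; the only point that deserves care is the iteration producing the correct sign $(-1)^{pq}$ for arbitrary $q\in\Z$, but this is a routine induction using only the graded-centre axiom.
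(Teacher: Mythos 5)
Your proof is correct and follows essentially the same route as the paper: both reduce the claim to the naturality square for $\eta$ applied to $f$, followed by the identity $\eta_{\Sigma^q b} = (-1)^{pq}\Sigma^q(\eta_b)$ obtained by iterating the graded-centre axiom. You simply spell out the iteration step, which the paper takes for granted.
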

\begin{proof}
Let $\eta\colon \id_\A\to \Sigma^p$ be in $Z^*(\A)$ and $f\colon a\to
\Sigma^q(b)$ in $\Hom_\A^*(a,b)$. Since $\eta$ is a natural
transformation of functors, the following diagram commutes 
\[\xymatrix{
a\ar[r]^f\ar[d]^{\eta_a} & \Sigma^q(b)\ar[d]^{\eta_{\Sigma^q(b)}} \\
\Sigma^p(a)\ar[r]^{\Sigma^p(f)} & \Sigma^p\Sigma^q(b)
}\]
As $\eta_{\Sigma^q(b)}=(-1)^{pq}\Sigma^q(\eta_b)$, the claim follows. 
\end{proof}
Using that a tensor triangulated category $\C$ acts on itself, we
obtain the following immediate corollary.
\begin{cor}
\begin{enumerate}[\rm(a)]
\item The composition 
\[\End_\C^*(\frake)\extto{\varphi_\C}
Z^*(\C)\extto{\gamma_\frake} \End_\C^*(\frake)\] 
of homomorphisms of graded rings is the identity. 
\item  The graded endomorphism ring $\End_\C^*(\frake)$ is
  graded-commutative.  
\end{enumerate}
\end{cor}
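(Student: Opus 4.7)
For part (a), my plan is to compute $\gamma_\frake(\varphi_\C(h))$ directly from the definitions, using the canonical action of $\C$ on itself ($*=\otimes$, $\frakl'=\frakl$, $\rho'=\rho$, $\Sigma=T$). Evaluating the defining composition of $\varphi_\C(h)$ at $\frake$ gives
\[
\frake\xrightarrow{\frakl_\frake^{-1}}\frake\otimes\frake\xrightarrow{h\otimes 1_\frake}T^p(\frake)\otimes\frake\xrightarrow{(\rho_p)_{\frake,\frake}}T^p(\frake\otimes\frake)\xrightarrow{T^p(\frakl_\frake)}T^p(\frake).
\]
The key identity I would prove is
\[
T^p(\frakr_\frake)\comp(\rho_p)_{\frake,\frake}=\frakr_{T^p(\frake)}.
\]
This follows by induction on $p$: the base case is exactly the triangle axiom (the right-hand commutative square $T(\frakr)\comp\rho=\frakr$ in the definition of a triangulated tensor category), and the inductive step uses functoriality of $T$ applied to the previous step, together with the triangle axiom once more. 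Combining this with the equality $\frakl_\frake=\frakr_\frake$ (quoted from Kelly's lemma in the paper) replaces the last two arrows of the composition by $\frakr_{T^p(\frake)}$. Then naturality of $\frakr$ gives $\frakr_{T^p(\frake)}\comp(h\otimes 1_\frake)=h\comp\frakr_\frake$, so the whole composition collapses to $h\comp\frakr_\frake\comp\frakl_\frake^{-1}=h$, using $\frakr_\frake=\frakl_\frake$ once more.

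For part (b), I would deduce graded-commutativity directly from part (a) and Proposition~\ref{prop:gradcomm}. Apply that proposition to the action of $\C$ on itself with $a=b=\frake$, to $\eta=\varphi_\C(h)\in Z^p(\C)$ and to $f=h'\in\Hom^q_\C(\frake,\frake)$. The proposition yields
\[
\gamma_\frake(\varphi_\C(h))\cdot h'=(-1)^{pq}\,h'\cdot\gamma_\frake(\varphi_\C(h)),
\]
since the left and right $Z^*(\C)$-module structures on $\Hom^*_\C(\frake,\frake)=\End^*_\C(\frake)$ are defined by evaluation through $\gamma_\frake$. By part (a), $\gamma_\frake(\varphi_\C(h))=h$, so this reads exactly $h\cdot h'=(-1)^{pq}h'\cdot h$, which is graded-commutativity.

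The only nontrivial step is the inductive identity $T^p(\frakr_\frake)\comp(\rho_p)_{\frake,\frake}=\frakr_{T^p(\frake)}$; everything else is formal manipulation with naturality squares and the evaluation homomorphism $\gamma_\frake$. I do not anticipate any real obstacle, since all needed axioms (the triangle axiom, $\frakl_\frake=\frakr_\frake$, naturality of $\frakr$, and Proposition~\ref{prop:gradcomm}) are already in place in the excerpt.
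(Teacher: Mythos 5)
Your proof is correct and matches the paper's approach: the paper dismisses part (a) as ``a direct computation'' (which you simply spell out, the key step being the inductive identity $T^p(\frakr_\frake)\comp(\rho_p)_{\frake,\frake}=\frakr_{T^p(\frake)}$ obtained from the compatibility square $T(\frakr)\comp\rho=\frakr$), and derives part (b) exactly as you do, as an immediate consequence of (a) together with Proposition~\ref{prop:gradcomm}.
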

\begin{proof}
The proof of (a) is a direct computation. The claim in (b) is then an
immediate consequence of Proposition~\ref{prop:gradcomm}. 
\end{proof}

\begin{remark} (1) The  triangulated  tensor category $\C$ with an
action on $\A$ can be viewed as a categorification of a central ring
action, namely, a homomorphism of graded rings from a
graded-commutative ring $R$ to $Z^*(\A)$. 

(2) The above gives rise to a homomorphism of graded rings
$\varphi_a\colon
\End_\C^*(\frake)\to \End_\A^*(a)$ for any object $a$ in $\A$ by
letting $\varphi_a=\gamma_a\varphi_\A$. For $h\colon \frake\to 
T^p(\frake)$, the morphism $\varphi_a(h)$ is given as
\[a\extto{\frakl'^{-1}} \frake*a\extto{h*1}
T^p(\frake)*a\extto{\rho_p'}
\Sigma^p(\frake*a)\extto{\Sigma^p(\frakl')} \Sigma^p(a).\]

(3) Suppose that idempotents split in $\C$ and in $\A$. Then, if
$\End_\C^0(\frake)$ decomposes as a ring, then the categories $\C$ and
$\A$ also decompose as categories. Hence we can always assume that
$\C$ and $\A$ are indecomposable as categories and therefore that
$\End_\C^*(\frake)$ is indecomposable as a ring.

(4) The statement in (b) was first shown in \cite[Theorem 1.7]{SA}.
As pointed out in that paper, we obtain the graded-commutativity of
the following graded rings (using the notation of Section~\ref{section:1}
and Examples~\ref{exam:1}--\ref{exam:6}):
\begin{enumerate}[\rm(i)]
\item Let $G$ be a finite group, and let $k$ be a field. Then
\[\End^*_{\bfD(kG)}(k)=\amalg_{p\in\Z}\Hom_{\bfD(kG)}(k,k[p])
\simeq \amalg_{p\geqslant 0} \Ext^p_{kG}(k,k),\]
is the group cohomology ring of $G$. Also
\[\End^*_{\uMod kG}(k)=\amalg_{p\in\Z}\uHom_{kG}(k,\Omega_{kG}^{-p}(k))
\simeq \chkExt^*_{kG}(k,k),\]
is the Tate cohomology ring of $G$. 
\item Let $H$ be a Hopf algebra over a field $k$. Then 
\[\End^*_{\bfD(H)}(k)=\amalg_{p\in\Z}\Hom_{\bfD(H)}(k,k[p])
\simeq \amalg_{p\geqslant 0} \Ext^p_H(k,k),\]
is the cohomology ring of $k$ over $H$. Also 
\[\End^*_{\uMod H}(k)=\amalg_{p\in\Z}\uHom_H(k,\Omega_H^{-p}(k))
\simeq \chkExt^*_H(k,k),\]
is the Tate cohomology ring of $k$ over $H$. 
\item Let $\L$ be an algebra over a field $k$. Then
\[\End^*_{\bfD(\L^e)}(\L)=\amalg_{p\in\Z}\Hom_{\bfD(\L^e)}(\L,\L[p])
\simeq \amalg_{p\geqslant 0} \Ext^p_{\L^e}(\L,\L),\]
is the Hochschild cohomology ring of $\L$ over $k$. Also, if $\L$ is
selfinjective,  
\[\End^*_{\underline{\B}}(\L)=\amalg_{p\in\Z}\uHom_{\L^e}(\L,
\Omega_{\L^e}^{-p}(\L))
\simeq \chkExt^*_{\L^e}(\L,\L),\]
is the Tate cohomology ring of $\L$ over $\L^e$.
\end{enumerate}
\end{remark}

\section{Support varieties}\label{section:3}

Throughout this section
$\C=(\C,\otimes,\frake,\fraka,\frakl,\frakr,T,\lambda,\rho)$ is a
triangulated tensor category acting on a small triangulated category
$\A=(\A,\Sigma)$. Let $H$ be a positively graded and
graded-commutative ring with a homomorphism of graded rings
$H\to \End_\C^*(\frake)$. As mentioned earlier this gives rise to a
theory of support varieties in $\Spec H$, where $\Spec H$ is the set
of all homogeneous prime ideals in $H$.  We begin this section by
pointing out the standard properties of these support varieties. We
then give realizability results for closed homogeneous subvarieties of
varieties of given objects and possible generators for $\A$.

In order to obtain our results, further assumptions are needed. So the
following are our standing assumptions.
\begin{assumption}\label{ass:standing}
For $\C$, $\A$ and $H$ the following holds:
\begin{enumerate}[(1)]
\item $\C=(\C,\otimes,\frake,\fraka,\frakl,\frakr,T,\lambda,\rho)$ 
is a triangulated tensor category acting on a small triangulated category
$\A=(\A,\Sigma)$. 
\item $H$ is a positively graded-commutative Noetherian ring with
  a homomorphism of graded rings $H\to \End_\C^*(\frake)$.
\item The left $H$-module $\Hom_\A^*(a,b)$ is finitely generated
  for all objects $a$, $b$ in $\A$. 
\end{enumerate}
\end{assumption}
If the graded ring $H$ has a non-trivial idempotent $f$ in degree
zero, and for some object $a$ in $\A$ we have both $f*1_a$ and
$(1_H - f)*1_a$ non-zero, then assuming that idempotents split in
$\A$, one can show that the category $\A$ decomposes. Hence, in this 
case, we can assume that the graded ring $H$ has only trivial
idempotents in degree zero.  We sometimes assume a stronger condition,
namely that $H^0$ is a local ring.

In general the graded endomorphism ring $\End_\C^*(\frake)$ need not
be a positively graded ring making $\Spec \End_\C^*(\frake)$ a more
difficult object to handle than $\Spec H$. One could use the positive
part $\End_\C^{\geqslant 0}(\frake)$ of $\End_\C^*(\frake)$ instead of
some graded-commutative ring $H$. However, there are situations where
assuming finite generation over $H$ or over
$\End_\C^{\geqslant 0}(\frake)$, are equivalent, which we now demonstrate. 

Having $\C$ acting on $\A$ gives rise to a functor from $\C$ to the
endofunctors of $\A$. A necessary condition related for this functor
to have a right adjoint, is as pointed out in \cite{JK}, that each
functor $-*a\colon \C\to \A$ has a right adjoint, that is, there is a
functor $\A\to\C$ for each object $a$ in $\A$, denoted $F''(a,-)$ and
an isomorphism
\[\Hom_\A(x*a,b)\to \Hom_\C(x,F''(a,b)),\]
natural in all three variables. Having such a right adjoint induces an
isomorphism $\Hom_\A^*(a,b)\simeq \Hom_\C^*(\frake,F''(a,b))$ of
$\End_\C^*(\frake)$-modules.  In some situations there are objects $a$
and $b$ in $\A$ such that $\frake$ is in the thick triangulated
subcategory generated by $F''(a,b)$ in $\C$. Therefore, if
$\Hom_\A^*(a,b)$ is finitely generated for some positively graded
commutative Noetherian ring $H$, then $\End_\C^{\geqslant 0}(\frake)$
(also $\End_\C^*(\frake)$) is Noetherian too. A further discussion on
such functors $F''$, called function objects, can be found in Section
\ref{section:5}. A classical isomorphism, which gives rise to such a
function object, is the adjunction isomorphism
\[\Hom_\L(B\otimes_\L M,N)\simeq \Hom_{\L^e}(B,\Hom_k(M,N))\]
for a $k$-algebra $\L$, where $B$ is a $\L^e$-module, and $M$ and $N$ 
are $\L$-modules.

Now we give the definition of the support variety of a pair of objects
$(a,b)$ in $\A$. 
\begin{defin}
For a pair of objects $a$ and $b$ in $\A$, the \emph{support variety
$V(a,b)$ of $(a,b)$ with respect to $H$} is given by
\[V(a,b)=\{\frakp\in \Spec H\mid \Hom_\A^*(a,b)_\frakp\neq (0)\} =
         \Supp(\Hom_\A^*(a,b)).\]
\end{defin}
Proposition~\ref{prop:gradcomm} implies that the annihilator
$\Ann_H\Hom_\A^*(a,b)$ of $\Hom_\A^*(a,b)$ as an $H$-module for any
objects $a$ and $b$ in $\A$, is independent of viewing
$\Hom_\A^*(a,b)$ as a left or as a right $H$-module. We denote this
annihilator by $A(a,b)$. For a graded ideal $I$ in $H$ we denote by
$V(I)=\Supp(H/I)$. 

The following properties of the support variety are standard and
straightforward to verify, and we leave the proofs to the reader.
\begin{prop}\label{prop:elementarypropvar}
\pushQED{\qed} The support variety $V(-,-)$ has the following properties:
\begin{enumerate}[\rm(a)]
\item Let $a_1\to a_2\to a_3\to \Sigma(a_1)$ be a triangle in
  $\A$. Let $a$ be an object in $\A$.
\begin{enumerate}[\rm(i)]
\item $V(a,a_r) \subseteq V(a,a_s)\cup
  V(a,a_t)$ whenever $\{r,s,t\}=\{1,2,3\}$. 
\item $V(a_r,a) \subseteq V(a_s,a)\cup
  V(a_t,a)$ whenever $\{r,s,t\}=\{1,2,3\}$. 
\end{enumerate}
\item $V(a,b)=V(\Sigma^i(a),\Sigma^j(b))$ for any pair of objects
  $(a,b)$ in $\A$ and integers $i$ and $j$ in $\Z$.
\item Let $\{a_i\}_{i=1}^r$ and $\{b_j\}_{j=1}^s$ be two finite
  sets of objects in $\A$. Then
\[V(\amalg_{i=1}^r a_i,\amalg_{j=1}^s b_j)=\cup_{i,j=1}^{r,s}
V(a_i,b_j).\qedhere\]
\end{enumerate} 
\end{prop}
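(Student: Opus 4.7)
The plan is to derive all three properties from the standard fact that $\Hom_\A(a,-)$ and $\Hom_\A(-,a)$ send distinguished triangles to long exact sequences of abelian groups, combined with the observation that the $H$-module structure on $\Hom_\A^*(a,b)$ (given via $\varphi_b$ on the left and $\varphi_a$ on the right, or equivalently the single action whose annihilator is $A(a,b)$ by Proposition~\ref{prop:gradcomm}) is compatible with these long exact sequences.

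For part (a)(i), I would apply $\Hom_\A^*(a,-)$ to the triangle $a_1\to a_2\to a_3\to \Sigma(a_1)$ to obtain a long exact sequence
\[\cdots\to \Hom_\A^n(a,a_1)\to \Hom_\A^n(a,a_2)\to \Hom_\A^n(a,a_3)\to \Hom_\A^{n+1}(a,a_1)\to\cdots\]
and observe that each connecting map is $H$-linear because it is obtained by composition with a morphism in $\A$, while the $H$-action is induced by a natural transformation $\id_\A\to \Sigma^p$ and therefore commutes with every morphism. Localizing at a prime $\frakp\in\Spec H$ preserves exactness, so if $\Hom_\A^*(a,a_s)_\frakp$ and $\Hom_\A^*(a,a_t)_\frakp$ both vanish for some rotation $\{r,s,t\}=\{1,2,3\}$, then $\Hom_\A^*(a,a_r)_\frakp$ vanishes as well. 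Part (a)(ii) follows from the same argument applied to the contravariant functor $\Hom_\A^*(-,a)$.

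For part (b), I would use the canonical isomorphism of graded $H$-modules
\[\Hom_\A^*(\Sigma^i(a),\Sigma^j(b))\;\simeq\; \Hom_\A^*(a,b)[j-i],\]
where the grading shift changes neither the underlying ungraded module nor its annihilator as an $H$-module (up to relabeling of degrees, which is irrelevant for support since $H$ is graded and support is computed from the homogeneous annihilator). Thus the two modules have the same support.

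For part (c), I would use that $\Hom_\A$ converts finite coproducts in either variable into finite products, so
\[\Hom_\A^*\Bigl(\amalg_{i=1}^r a_i,\amalg_{j=1}^s b_j\Bigr)\;\simeq\;\bigoplus_{i,j}\Hom_\A^*(a_i,b_j)\]
as graded $H$-modules, and then invoke the standard fact $\Supp(M\oplus N)=\Supp(M)\cup\Supp(N)$ for $H$-modules. The only subtle point, and the one place I would take care, is verifying that the long exact sequence in (a) is genuinely $H$-linear; this reduces to checking that the $H$-action commutes with the morphisms of the triangle, which is immediate from the definition of $\varphi_a$ as a natural transformation. Everything else is bookkeeping about supports of finitely generated modules over the Noetherian graded ring $H$.
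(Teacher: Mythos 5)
Your argument is correct and is precisely the standard one. The paper gives no proof here, stating only that these properties ``are standard and straightforward to verify'' and leaving them to the reader; what you wrote is the intended verification, and you correctly flag and resolve the one genuine subtlety, namely that the long exact sequence obtained from $\Hom_\A^*(a,-)$ or $\Hom_\A^*(-,a)$ is a sequence of $H$-modules, which follows because the $H$-action factors through natural transformations $\id_\A\to\Sigma^p$ and hence commutes (up to sign, which is irrelevant for supports) with all morphisms appearing in the sequence. One small remark: your localization argument for (a) directly gives $V(a,a_2)\subseteq V(a,a_1)\cup V(a,a_3)$ from the triangle as written; to obtain the other two inclusions one rotates the triangle and invokes part (b), which you implicitly acknowledge with the phrase ``for some rotation,'' and the rest of the bookkeeping (shift-invariance of support, $\Supp(M\oplus N)=\Supp M\cup\Supp N$) is exactly as you say.
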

Since the action of $H$ on $\Hom_\A^*(a,b)$ factors through the action
of $H$ on both $\Hom_\A^*(a,a)$ and $\Hom_\A^*(b,b)$ for any pair of
objects $a$ and $b$ in $\A$, the following result is immediate. 
\begin{prop}\label{prop:annihilator}
Let $a$ and $b$ be objects in $\A$. 
\begin{enumerate}[\rm(a)]
\item $V(a,b) \subseteq V(a,a)\cap V(b,b)$. 
\item $V(a,a) = \cup_{x\in \A}V(a,x) = \cup_{x\in\A} V(x,a)$. \qed
\end{enumerate} 
\end{prop}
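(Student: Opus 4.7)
The proof rests on the observation recorded just after Proposition~\ref{prop:gradcomm}: the $H$-action on $\Hom_\A^*(a,b)$ factors on the right through $\varphi_a\colon\End_\C^*(\frake)\to\End_\A^*(a)$ and on the left through $\varphi_b\colon\End_\C^*(\frake)\to\End_\A^*(b)$, and by Proposition~\ref{prop:gradcomm} these two actions agree up to sign and so share a common annihilator $A(a,b)$. My plan is to deduce from this that both $A(a,a)$ and $A(b,b)$ are contained in $A(a,b)$, and then translate this ring-theoretic containment into the stated inclusions of varieties using $V(a,b)=\Supp\Hom_\A^*(a,b)=V(A(a,b))$, which is valid since $\Hom_\A^*(a,b)$ is a finitely generated $H$-module by Assumption~\ref{ass:standing}(3).

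For part (a), the key identity is that $A(a,a)$ coincides with the kernel of the composition $H\to\End_\C^*(\frake)\xrightarrow{\varphi_a}\End_\A^*(a)$: an element $h\in H$ annihilates $\End_\A^*(a)$ exactly when it kills $1_a$, and the result of applying $h$ to $1_a$ is precisely the image of $h$ in $\End_\A^*(a)$ under that composition. Once this is in hand, for any $f\in\Hom_\A^*(a,b)$ and any $h\in A(a,a)$, the right action gives $f\cdot h = f\circ(\text{image of }h\text{ in }\End_\A^*(a)) = 0$, so $h\in A(a,b)$. Hence $A(a,a)\subseteq A(a,b)$ and consequently $V(a,b)\subseteq V(a,a)$. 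The inclusion $V(a,b)\subseteq V(b,b)$ follows by the mirror argument using the left action through $\varphi_b$.

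For part (b), the inclusions $\cup_{x\in\A} V(a,x)\subseteq V(a,a)$ and $\cup_{x\in\A} V(x,a)\subseteq V(a,a)$ are immediate from part (a), since each $V(a,x)$ and $V(x,a)$ sits inside $V(a,a)$. The reverse inclusions are trivial by taking $x=a$, which places $V(a,a)$ itself as a term in each union. I anticipate no serious obstacle: the only point worth checking is the general fact that $\Supp M = V(\Ann_H M)$ for a finitely generated graded module $M$ over the graded-commutative Noetherian ring $H$, which is the standard extension of the classical affine identity to the homogeneous setting.
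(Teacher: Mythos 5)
Your proof is correct and is essentially the paper's intended argument: the paper states the result as immediate from the observation that the $H$-action on $\Hom_\A^*(a,b)$ factors (on the right through $\varphi_a$, on the left through $\varphi_b$) through the actions on $\End_\A^*(a)$ and $\End_\A^*(b)$, which is precisely what you spell out. Your identification of $A(a,a)$ with the kernel of $H\to\End_\A^*(a)$ and the resulting containments $A(a,a),A(b,b)\subseteq A(a,b)$ are exactly the content the authors leave to the reader.
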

Having these properties at hand we define the \emph{support variety}
of an object $a$ in $\A$ to be $V(a)=V(a,a)$. The properties above give
the following behaviour.
\begin{prop}\label{prop:elempropvariety}
 The support variety $V(-)$ has the following properties:
\begin{enumerate}[\rm(a)]
\item If $a_1\to a_2\to a_3\to \Sigma(a_1)$ is an exact triangle
  in $\A$, then $V(a_r)\subseteq V(a_s)\cup V(a_t)$
  whenever $\{r,s,t\}=\{1,2,3\}$. 
\item $V(a)=V(\Sigma^i(a))$ for all objects $a$ in $\A$ and $i$
  in $\Z$. 
\item $V(\amalg_{i=1}^n a_i)=\cup_{i=1}^n V(a_i)$.\qed
\end{enumerate}
\end{prop}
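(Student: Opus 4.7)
The plan is to reduce all three parts to the corresponding statements for the two-variable support variety $V(-,-)$ proved in Propositions~\ref{prop:elementarypropvar} and~\ref{prop:annihilator}. The one identity driving everything is Proposition~\ref{prop:annihilator}(b), namely $V(a)=\bigcup_{x\in\A} V(a,x)=\bigcup_{x\in\A} V(x,a)$, which converts each one-variable variety into a union of two-variable varieties to which the earlier results apply.

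For part (a), fix a triangle $a_1\to a_2\to a_3\to \Sigma(a_1)$ and a permutation $\{r,s,t\}=\{1,2,3\}$. For every object $x$ in $\A$, applying Proposition~\ref{prop:elementarypropvar}(a)(i) to this triangle (with $x$ occupying the first slot) yields $V(x,a_r)\subseteq V(x,a_s)\cup V(x,a_t)$. Proposition~\ref{prop:annihilator}(a) bounds each piece by a single-object variety: $V(x,a_s)\subseteq V(a_s)$ and $V(x,a_t)\subseteq V(a_t)$. Taking the union over all $x\in\A$ and using Proposition~\ref{prop:annihilator}(b) on the left-hand side produces $V(a_r)\subseteq V(a_s)\cup V(a_t)$. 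For part (b), Proposition~\ref{prop:elementarypropvar}(b) applied with $i=j$ immediately gives $V(a)=V(a,a)=V(\Sigma^i(a),\Sigma^i(a))=V(\Sigma^i(a))$.

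For part (c), expand using Proposition~\ref{prop:elementarypropvar}(c) in both slots:
\[V\bigl(\amalg_{i=1}^n a_i\bigr)=V\bigl(\amalg_{i=1}^n a_i,\,\amalg_{j=1}^n a_j\bigr)=\bigcup_{i,j=1}^n V(a_i,a_j).\]
The diagonal terms give $V(a_i)=V(a_i,a_i)\subseteq V(\amalg_i a_i)$, yielding one inclusion. For the other, Proposition~\ref{prop:annihilator}(a) shows $V(a_i,a_j)\subseteq V(a_i)\cap V(a_j)\subseteq\bigcup_k V(a_k)$, so the double union collapses to $\bigcup_i V(a_i)$.

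There is no real obstacle; every step is a direct quotation of a prior result, and the only care needed is to apply the symmetric hom-bound from Proposition~\ref{prop:annihilator}(a) and the triangle inclusion from Proposition~\ref{prop:elementarypropvar}(a) in the correct slot. Effectively the proof is bookkeeping that propagates the already-established two-variable statements along the identification $V(a)=V(a,a)$ by way of the union formula.
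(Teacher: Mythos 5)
Your proof is correct and follows exactly the route the paper intends: the paper leaves this proposition without an explicit argument precisely because it falls out of Propositions~\ref{prop:elementarypropvar} and~\ref{prop:annihilator} by the kind of bookkeeping you carry out. One small streamlining in part (a): rather than unioning over all $x\in\A$ and invoking Proposition~\ref{prop:annihilator}(b), you can simply take $x=a_r$ in Proposition~\ref{prop:elementarypropvar}(a)(ii) to get $V(a_r)=V(a_r,a_r)\subseteq V(a_s,a_r)\cup V(a_t,a_r)\subseteq V(a_s)\cup V(a_t)$ directly via Proposition~\ref{prop:annihilator}(a).
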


Our next aim is to show that any closed homogeneous subvariety of the
variety of an object $a$ in $\A$ occurs as a variety of an object in
$\A$. In doing so the construction of Koszul objects is crucial (see
\cite[\S6]{HPS1997}). Any morphism $h\colon \frake\to T^p(\frake)$
induces for any object $a$ in $\A$ a morphism
\[h* 1_a\colon \frake*a \to T^p(\frake)*a\]
which we can identify with 
\[h\cdot 1_a\colon a\simeq \frake*a\extto{h*1_a} T^p(\frake)*a\simeq
\Sigma^p(\frake*a)\simeq \Sigma^p(a).\]
Complete this morphism to a triangle
\[
a \extto{h\cdot 1_a} \Sigma^p(a)\to a\kos h\to\Sigma(a)
\]
in $\A$. An immediate consequence of the above construction is that
the Koszul object $a\kos h$ is in the thick subcategory generated by
$a$ in $\A$ for all homogeneous elements $h$ in $H$. Moreover, as we
also note below, $V(a\kos h)\subseteq V(a)$.

Using the triangle $a\extto{h\cdot 1_a} \Sigma^p(a) \to a\kos h \to
\Sigma(a)$ we have the following.

\begin{prop}\label{prop:decompannihilator}
  Let $h\colon \frake\to T^p(\frake)$ be in $\C$. Then the following
  assertions hold.
\begin{enumerate}[\rm(a)]
\item If $h$ is in $A(a,a)$, then $\Sigma(a)\amalg
  \Sigma^p(a)\simeq a\kos h$. 
\item If $\{ h_1,h_2,\ldots,h_t\}$ is in $A(a,a)$, then
  $\Sigma^t(a)$ is a direct summand of 
\[(\cdots ((a\kos h_1)\kos h_2)\cdots)\kos h_t.\]
\item $V(a\kos h) \subseteq V(a)$. 
\item The element $h^2$ is in
  $A(a\kos h,a\kos h)$. In particular,
 \[V(a\kos h)\subseteq V(\langle h\rangle)\cap V(a).\]
\end{enumerate}
\end{prop}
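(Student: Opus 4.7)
The plan is to leverage the defining Koszul triangle
\[ a \extto{h\cdot 1_a}\Sigma^p(a)\extto{u} a\kos h\extto{v}\Sigma(a) \]
together with the naturality of the central action $\varphi_{(-)}(h)\colon\id_\A\to\Sigma^p$ supplied by Section~\ref{section:2}, and the identification $h\cdot 1_x=\varphi_x(h)$ for any object $x$ in $\A$.

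Parts (a)--(c) are largely formal. For (a), the connecting morphism $h\cdot 1_a=\varphi_a(h)\in\End^p_\A(a)$ vanishes whenever $h$ annihilates $\Hom^*_\A(a,a)$, so the triangle splits and $a\kos h\simeq\Sigma^p(a)\amalg\Sigma(a)$. For (b), I would induct on $t$ with the slightly stronger invariant that $M_t:=(\cdots(a\kos h_1)\cdots)\kos h_t$ is a finite direct sum of suspensions of $a$ containing $\Sigma^t(a)$ among its summands. Then $\End^*_\A(M_{t-1})$ is built from shifted copies of $\Hom^*_\A(a,a)$, hence is annihilated by each $h_i\in A(a,a)$; in particular $h_t\cdot 1_{M_{t-1}}=0$, and applying (a) gives $M_t\simeq\Sigma^{p_t}(M_{t-1})\amalg\Sigma(M_{t-1})$, with $\Sigma^t(a)$ appearing inside $\Sigma(M_{t-1})$. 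Part (c) is immediate from Proposition~\ref{prop:elempropvariety}(a,b) applied to the defining triangle, since $V(\Sigma^p(a))=V(a)$.

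The main content lies in (d), where the goal is $\varphi_{a\kos h}(h^2)=0$. I would prove this by combining two naturality identities. Using the graded-centre relation $\varphi_{\Sigma^q(x)}(h)=(-1)^{pq}\Sigma^q(\varphi_x(h))$ together with the naturality of $\varphi_{(-)}(h)$ applied to $u$ and to $v$, one obtains
\[ \varphi_{a\kos h}(h)\circ u \;=\; \pm\,\Sigma^p\bigl(u\circ\varphi_a(h)\bigr)\;=\;0,\qquad \Sigma^p(v)\circ\varphi_{a\kos h}(h) \;=\; \pm\,\Sigma(\varphi_a(h))\circ v\;=\;0, \]
each reducing to two consecutive arrows of the (possibly rotated) Koszul triangle. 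The second vanishing lets me factor $\varphi_{a\kos h}(h)=\Sigma^p(u)\circ g$ for some $g\colon a\kos h\to\Sigma^{2p}(a)$. Then, using that $\varphi_{a\kos h}$ is a ring homomorphism and that multiplication in $\End^*_\A(a\kos h)$ is $f\cdot f'=\Sigma^{|f'|}(f)\circ f'$,
\[ \varphi_{a\kos h}(h^2)\;=\;\Sigma^p(\varphi_{a\kos h}(h))\circ\varphi_{a\kos h}(h)\;=\;\Sigma^p\bigl(\varphi_{a\kos h}(h)\circ u\bigr)\circ g\;=\;0. \]
Since the $H$-action on any morphism in $\End^*_\A(a\kos h)$ factors through $\varphi_{a\kos h}$, vanishing of $\varphi_{a\kos h}(h^2)$ entails that $h^2$ annihilates the whole of $\End^*_\A(a\kos h)$, i.e.\ $h^2\in A(a\kos h,a\kos h)$. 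The inclusion $V(a\kos h)\subseteq V(\langle h\rangle)$ is then immediate from $V(\langle h^2\rangle)=V(\langle h\rangle)$, and combining with (c) yields $V(a\kos h)\subseteq V(\langle h\rangle)\cap V(a)$.

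The hardest step will be (d): one must set up the double use of the triangle, lifting $\varphi_{a\kos h}(h)$ through $\Sigma^p(u)$ while simultaneously exploiting the dual identity $\varphi_{a\kos h}(h)\circ u=0$ to kill the square. The sign bookkeeping from the graded centre is a routine nuisance but is irrelevant to the vanishing conclusion.
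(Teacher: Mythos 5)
Your proof is correct but part (d) takes a different route from the paper. You argue purely at the level of morphisms: from the two naturality identities you show that $\varphi_{a\kos h}(h)$ factors as $\Sigma^p(u)\circ g$ through the Koszul triangle and is simultaneously annihilated by precomposition with $u$, and multiplying these two facts gives $\varphi_{a\kos h}(h^2)=\Sigma^p(\varphi_{a\kos h}(h)\circ u)\circ g=0$. The paper instead applies $\Hom^*_\A(b,-)$ to the Koszul triangle and works with the resulting long exact sequence of graded $H$-modules, which collapses to a short exact sequence pinching $\Hom^*_\A(b,a\kos h)$ between $\Hom^*_\A(b,a)/h\cdot\Hom^*_\A(b,a)$ and $\Ker(h\cdot -|_{\Hom^*_\A(b,a)})$; both outer terms are killed by $h$, so $h^2$ kills the middle. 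These are two standard renditions of the same fact (the cone on $h$ carries $h$-torsion of order at most two): your morphism-level version is more self-contained and makes the sign discipline explicit, while the paper's module-level version directly outputs the formally stronger statement that $h^2$ annihilates $\Hom^*_\A(b,a\kos h)$ for every $b$, a strengthening which, as you note, is automatic from $\varphi_{a\kos h}(h^2)=0$ since the $H$-action on any $\Hom^*_\A(b,a\kos h)$ factors through $\gamma_{a\kos h}$. For (b) you supply the induction the paper compresses into ``repeated use of (a)'', and the key observation making that induction run is exactly the one you record: $A(a,a)\subseteq A(M_{t-1},M_{t-1})$ because $\End^*_\A(M_{t-1})$ is a finite direct sum of shifts of $\Hom^*_\A(a,a)$, so (a) applies at each stage. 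Parts (a) and (c) match the paper's argument.
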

\begin{proof}
(a) This follows immediately from the triangle we constructed above. 

(b) Repeated use of (a) shows this. 

(c) We have the triangle $\Sigma^p(a)\to a\kos h\to
\Sigma(a)\to \Sigma^{p+1}(a)$ in $\A$. By Proposition
\ref{prop:elempropvariety} we infer that
$V(a\kos h)\subseteq V(\Sigma^p(a))\cup V(\Sigma(a))=V(a)$.

(d) From the triangle $a\extto{h\cdot 1_a} \Sigma^p(a)\to
a\kos h\to \Sigma(a)$ in $\A$ we get the exact sequence
\begin{multline}
\Hom_\A^*(b,a) \extto{\Hom_\A(b,(h\cdot 1_a))}
\Hom_\A^*(b,\Sigma^p(a))\to \notag \\
\Hom_\A^*(b,a\kos h)\to \Hom_\A^*(b,\Sigma(a))\notag
\end{multline}
for all objects $b$ in $\A$. A straightforward calculation shows that 
the map
\[\Hom_\A(b,(h\cdot 1_a))\colon \Hom_\A^*(b,a)\to
\Hom_\A^*(b,\Sigma^p(a))\] is given by multiplication by $h$ from the
left (up to sign). Since $\Hom_\A^*(b,\Sigma^p(a))=\Hom_\A^*(b,a)$, we
obtain the exact sequence
\[0\to \Hom_\A^*(b,a)/h\cdot\Hom_\A^*(b,a) \to 
\Hom_\A^*(b,a\kos h)\to \Ker(h\cdot -|_{\Hom_\A^*(b,a)})\to
0,\]
so that $h^2\cdot \Hom_\A^*(b,a\kos h)=(0)$ for all objects
$b$ in $\A$. It follows that  
$V(a\kos h)\subseteq V(\langle h\rangle)\cap V(a)$.
\end{proof}

If we impose the following extra condition on the action of $\C$ on $\A$, 
\begin{enumerate}
\item[(4)] The functor $-*a\colon \C\to \A$ is an exact functor for
all objects $a$ in $\A$. In this case the action is said to be
\emph{compatible} with the triangulation in $\C$,
\end{enumerate}
then we get an additional way of viewing $a\kos h$. Given the morphism
$h\colon \frake \to T^p(\frake)$ in $\C$, complete it to a triangle
\[\frake\extto{h} T^p(\frake) \to \frake\kos h\to T(\frake)\]
in $\C$. Here $\frake\kos h$ is unique up to a non-unique
isomorphism. For any object $a$ in $\A$ we get a commutative diagram
in $\A$, where the upper and the lower rows are triangles in $\A$.  
\[\xymatrix{%
\frake * a \ar[r]^-{h*1_a} \ar[dd]^{\frakl'} & 
T^p(\frake)*a \ar[r]\ar[d]^{\rho_p'} & 
(\frake\kos h)*a \ar[r]\ar[dd]^\wr & 
T(\frake)*a \ar[d]^{\rho'} \\
 & \Sigma^p(\frake *a) \ar[d]^{\Sigma^p(\frakl')}  &  & 
\Sigma(\frake*a) \ar[d]^{\Sigma(\frakl')} \\
a \ar[r]^-{h\cdot 1_a} & \Sigma^p(a) \ar[r] & a\kos h \ar[r] & \Sigma(a)}\] 
Since the two first vertical maps are isomorphisms, it follows that
$(\frake\kos h)*a\simeq a\kos h$. 

Computing the support variety of $a\kos h$ is the key to our
main result in this section. The proof is similar to the analogous
result in \cite[Proposition 4.3]{EHSST}. Next we show that the
inclusion in (d) above actually is an equality.
\begin{prop}\label{prop:intersecvar}
Let $h\colon \frake\to T^p(\frake)$ be in $H$. Then for any object $a$
in $\A$ 
\[V(a\kos h)=V(\langle h\rangle)\cap V(a).\]
\end{prop}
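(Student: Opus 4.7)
The plan is to establish the nontrivial inclusion $V(\langle h\rangle)\cap V(a)\subseteq V(a\kos h)$, since the reverse containment is already Proposition~\ref{prop:decompannihilator}(d). I fix a homogeneous prime $\frakp\in V(\langle h\rangle)\cap V(a)$, equivalently $h\in\frakp$ and $\End_\A^*(a)_\frakp\neq 0$. Because $V(a\kos h)=\Supp \End_\A^*(a\kos h)$ by Proposition~\ref{prop:annihilator}(b), it suffices to show $\End_\A^*(a\kos h)_\frakp\neq 0$.

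The approach is to extract $\End_\A^*(a\kos h)$ via two long exact sequences coming from the defining Koszul triangle $a\extto{h\cdot 1_a}\Sigma^p(a)\to a\kos h\to \Sigma(a)$, exactly in the spirit of the proof of Proposition~\ref{prop:decompannihilator}(d) but now applied in each variable. First, apply $\Hom_\A^*(a,-)$; as already noted there, the connecting map is multiplication by $h$ (up to sign), yielding a short exact sequence of graded $H$-modules
\[
0\to \End_\A^*(a)/h\End_\A^*(a)\to \Hom_\A^*(a,a\kos h)\to \Ann_{\End_\A^*(a)}(h)\to 0
\]
(with appropriate degree shifts). Second, apply $\Hom_\A^*(-,a\kos h)$ to the same triangle to obtain an analogous short exact sequence presenting $\End_\A^*(a\kos h)$ as an extension of a submodule of $\Hom_\A^*(a,a\kos h)$ (annihilated by $h$) by a quotient $\Hom_\A^*(a,a\kos h)/h\Hom_\A^*(a,a\kos h)$.

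The closing move is the graded Nakayama lemma. Assumption~\ref{ass:standing}(3) ensures that both $\End_\A^*(a)$ and $\Hom_\A^*(a,a\kos h)$ are finitely generated over $H$, and $h$ lies in the maximal homogeneous ideal of the graded-local ring $H_\frakp$. Hence quotienting a nonzero finitely generated $H_\frakp$-module by $h$ again yields a nonzero module. Chaining this twice, I first conclude from $\End_\A^*(a)_\frakp\neq 0$ that $(\End_\A^*(a)/h\End_\A^*(a))_\frakp\neq 0$ and hence, via the first exact sequence, $\Hom_\A^*(a,a\kos h)_\frakp\neq 0$; then via the second exact sequence I conclude $\End_\A^*(a\kos h)_\frakp\neq 0$, placing $\frakp$ in $V(a\kos h)$.

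The main obstacle I anticipate is routine bookkeeping rather than genuine difficulty: correctly identifying both connecting maps with multiplication by $h$ (with the right signs coming from the graded-commutativity established in Section~\ref{section:2} and from the natural transformations $\lambda'_p,\rho'_p$), keeping track of degree shifts so that the exact sequences really are of $H$-modules, and verifying that each intermediate Hom-module meets the finite generation hypothesis that legitimizes graded Nakayama. Both the sign issue and the finite generation input were handled in the proof of Proposition~\ref{prop:decompannihilator}(d), so they should transfer directly.
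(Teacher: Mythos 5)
Your proof is correct, and it takes a genuinely different route from the paper's. The paper argues by contradiction: it assumes $\frakp \supseteq \langle h, A(a,a)\rangle$ but $\frakp \not\supseteq A(a\kos h, a\kos h)$, deduces that $\Hom_\A^*(b,a\kos h)_\frakp = 0$ for \emph{every} object $b$, pushes this through the single short exact sequence from the proof of Proposition~\ref{prop:decompannihilator}(d) (in the variable $b$), and derives $\Hom_\A^*(b,a)_\frakp = 0$ for all $b$ via Nakayama, contradicting $\frakp \in V(a)$. You instead run a direct argument with $b=a$ fixed: you first show $\Hom_\A^*(a, a\kos h)_\frakp \neq 0$, then apply a \emph{second} exact sequence (from $\Hom_\A^*(-, a\kos h)$ applied to the Koszul triangle) plus Nakayama to get $\End_\A^*(a\kos h)_\frakp \neq 0$. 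Both proofs rest on the same two ingredients — the Koszul triangle's long exact sequence and graded Nakayama — but your version avoids the contradiction and the quantification over all objects, which is cleaner. One small remark: your second step is more work than necessary. Once you know $\Hom_\A^*(a, a\kos h)_\frakp \neq 0$, i.e.\ $\frakp \in V(a, a\kos h)$, Proposition~\ref{prop:annihilator}(a) already gives $V(a,a\kos h) \subseteq V(a\kos h)$, so $\frakp \in V(a\kos h)$ with no second exact sequence. (That inclusion encodes exactly the fact your second Nakayama step re-proves by hand: the $H$-action on $\Hom_\A^*(a,a\kos h)$ factors through $\End_\A^*(a\kos h)$.) Your bookkeeping concerns about signs and degree shifts are real but innocuous, since the relevant self-maps agree with $h$-multiplication up to sign by Proposition~\ref{prop:gradcomm}, and annihilators and supports are insensitive to signs and shifts.
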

\begin{proof}\sloppy 
Choose a prime ideal $\frakp$ in $\Spec H$ lying
over $\langle h,A(a,a)\rangle$. Suppose that
$\cap_{b\in\A}A(b,a\kos h)$ is not contained in
$\frakp$. Then $(\Hom_\A^*(b,a\kos h))_{\frakp}=(0)$ for all
objects $b$ in $\A$. From the short exact sequence in the proof of the
previous result, we infer that
\[\Hom_\A^*(b,a)_{\frakp}=h\cdot \Hom_\A^*(b,a)_{\frakp}.\]
Since $\Hom_\A^*(b,a)_{\frakp}$ is a finitely generated
$H_{\frakp}$-module and $h$ is in $\frakp H_{\frakp}$, the Nakayama
Lemma implies that $\Hom_\A^*(b,a)_{\frakp}=(0)$. As $\Hom_\A^*(b,a)$
is a finitely generated $H$-module, the ideal $A(b,a)$ is not
contained in $\frakp$ for all objects $b$ in $\A$. In particular,
$A(a,a)$ is not contained in $\frakp$. This is a contradiction by the
choice of $\frakp$, hence
$\cap_{b\in\A}A(b,a\kos h)=A(a\kos h,
a\kos h)$ is contained in $\frakp$. It follows that $V(\langle
h\rangle)\cap V(a)\subseteq V(a\kos h)$. This completes the
proof using the previous result.
\end{proof}

Our main result of this section now follows directly from the above. 

\begin{thm}\label{thm:realization}
\begin{enumerate}[\rm(a)]
\item Let $a$ be an object in $\A$. Then any closed homogeneous
  subvariety of $V(a)$ occurs as the variety of some object in $\A$.
\item Suppose that $\A$ has a generator $\frakg$ in the sense that
  $\A=\Thick(\frakg)$. Then any closed homogeneous subvariety of $V(\frakg)$
  occurs as the variety of some object in $\A$. \qed
\end{enumerate}
\end{thm}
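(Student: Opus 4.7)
The plan is to reduce both parts to an iterated application of Proposition~\ref{prop:intersecvar}, using the Noetherian hypothesis in Assumption~\ref{ass:standing} to replace an arbitrary homogeneous ideal by a finite list of homogeneous generators.

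For part (a), I would start with a closed homogeneous subvariety $W \subseteq V(a)$. Any such $W$ has the form $W = V(I)$ for some homogeneous ideal $I$ of $H$, and since $H$ is Noetherian there exist homogeneous elements $h_1,\dots,h_t \in H$ with $I = \langle h_1,\dots,h_t\rangle$. Via the structural homomorphism $H\to \End_\C^*(\frake)$ we may view each $h_i$ as a morphism $\frake\to T^{p_i}(\frake)$ in $\C$, so the iterated Koszul object
\[
a_W \;:=\; (\cdots((a\kos h_1)\kos h_2)\cdots)\kos h_t
\]
is a well-defined object of $\A$.

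The key computation is then to show by induction on $t$ that $V(a_W) = V(I)\cap V(a)$. The base case $t=1$ is exactly Proposition~\ref{prop:intersecvar}. For the inductive step, one applies Proposition~\ref{prop:intersecvar} with $a$ replaced by $a_{W'}$ (where $W' = V(\langle h_1,\dots,h_{t-1}\rangle)\cap V(a)$), to obtain
\[
V(a_W) \;=\; V(\langle h_t\rangle)\cap V(a_{W'}) \;=\; V(\langle h_t\rangle)\cap V(\langle h_1,\dots,h_{t-1}\rangle)\cap V(a),
\]
and the right-hand side equals $V(I)\cap V(a)$ because intersection of supports matches the variety of a sum of ideals in $\Spec H$. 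Since $W\subseteq V(a)$ we have $V(I)\cap V(a) = W$, so $V(a_W) = W$, which proves (a).

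For part (b), the hypothesis $\A = \Thick(\frakg)$ plays no extra role in the construction: part (a) applied to $a = \frakg$ immediately produces, for each closed homogeneous subvariety $W\subseteq V(\frakg)$, an object $\frakg_W \in \A$ (in fact in $\Thick(\frakg)=\A$, since iterated Koszul objects on $\frakg$ lie in the thick subcategory generated by $\frakg$, as observed just before Proposition~\ref{prop:decompannihilator}) with $V(\frakg_W) = W$. There is no real obstacle here beyond keeping track of the finite list of generators produced by the Noetherian hypothesis; the substantive content has already been absorbed into Proposition~\ref{prop:intersecvar}.
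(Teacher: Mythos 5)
Your proof is correct and matches the paper's intended argument: Theorem~\ref{thm:realization} is stated with an immediate \qed because it is meant to follow directly from Proposition~\ref{prop:intersecvar} by exactly the induction on a finite generating set for the homogeneous ideal that you spell out, and part (b) is, as you note, just part (a) applied to $a = \frakg$ together with the observation that iterated Koszul objects on $\frakg$ remain in $\Thick(\frakg) = \A$.
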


\section{Complexity and perfect and periodic objects}\label{section:4}
Throughout this section we keep the setup from the previous
section. Thus we fix a triangulated tensor category $\C$ acting on a
triangulated category $\A$ and a ring $H$ satisfying
Assumption~\ref{ass:standing}.

In this context we define the class of perfect objects as the objects
with support variety contained in $V(H^+)$ with
$H^+=\langle \sqrt{0_{H^0}}, H^{\geqslant 1}\rangle$, where
$\sqrt{0_{H^0}}$ is the nilradical of $H^0$.  We introduce a notion of
complexity of objects in $\A$, and we characterize the perfect objects
as those being of complexity $0$.  We also define and characterize
periodic objects in terms of complexity when $H^0$ is a local ring.

First we discuss the concept of complexity of objects in $\A$.
Condition (2) is equivalent to $R=H^0$ being a (commutative)
Noetherian ring and $H$ being a finitely generated
(graded-commutative) graded algebra over $R$ (as for commutative
graded rings). It follows from this that each graded part $H^i$ of $H$
is a finitely generated $R$-module.  Condition (3) says that any
$\Hom_\A^*(a,b)$ is a finitely generated $H$-module for all objects
$a$ and $b$ in $\A$, hence $\Hom_\A(a,\Sigma^i(b))$ is a finitely
generated $R$-module for all objects $a$ and $b$ in $\A$ and all $i$
in $\Z$. For a finitely generated $S$-module $M$, denote by
$\mingen_S(M)$ the minimal number of generators as an $S$-module. Then
we define the complexity of an object in $\A$ as follows.
\begin{defin}
The \emph{complexity} $\cx(a)$ of an object $a$ in $\A$ is given by
\[ \min\{ s\in\mathbb{N}_0\mid\forall\,
b\in \A, \exists\, r_b\in \mathbb{R};
\mingen_R \left(\Hom_\A(a,\Sigma^n(b))\right)\leqslant r_b|n|^{s-1},
\forall\, |n|\gg 0\},\]
if such $r_b$ and $s$ exist for all objects $b$ in $\A$. Otherwise we
set $\cx(a)=\infty$.  
\end{defin}
Note that since (3) $\Hom_\A^*(a,b)$ is a finitely generated
$H$-module for all objects $a$ and $b$ in $\A$ and (2) $H$ is
graded-commutative and Noetherian, we have that the complexity is
bounded by the polynomial growth of the graded parts of $H$ as
$R$-modules, which is finite. In addition, since $\Hom_\A^*(a,b)$ is a
finitely generated $\End_\A^*(a)$-module, the complexity of $a$ is
bounded, and therefore equal to
\[\min\{ s\in\mathbb{N}_0\mid\exists\, r\in \mathbb{R};
\mingen_R \left(\Hom_\A(a,\Sigma^n(a))\right)\leqslant r|n|^{s-1},
\forall\, |n|\gg 0\}.\] 

We collect some elementary properties of the complexity of objects
next, where we leave the proofs to the reader.
\begin{prop}
\pushQED{\qed}
\begin{enumerate}[\rm(a)]
\item $\cx(a)=\cx(\Sigma^i(a))$ for all objects $a$ in $\A$ and
  all integers $i$.
\item $\cx(a\amalg b)=\max\{\cx(a),\cx(b)\}$ for all objects $a$
  and $b$ in $\A$. 
\item If $a_1\to a_2\to a_3\to \Sigma(a_1)$ is a triangle in
  $\A$, then 
\[\cx(a_2)\leqslant \max\{\cx(a_1),\cx(a_3)\}.\qedhere\] 
\end{enumerate}
\end{prop}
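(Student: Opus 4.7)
The three parts are controlled by how $\Hom_\A^*$ transforms under suspension, coproduct, and triangles, and I would treat them in that order. For (a), I would apply the suspension autoequivalence to identify $\Hom_\A(\Sigma^i(a),\Sigma^n(b)) \cong \Hom_\A(a,\Sigma^{n-i}(b))$ as $R$-modules; since $|n-i|$ and $|n|$ coincide up to a bounded additive constant, any polynomial bound of degree $s-1$ transfers between the two after enlarging $r_b$, yielding $\cx(\Sigma^i a) = \cx(a)$ by symmetry. For (b), the natural splitting $\Hom_\A(a\amalg b,\Sigma^n c) \cong \Hom_\A(a,\Sigma^n c)\oplus \Hom_\A(b,\Sigma^n c)$ combined with the sandwich $\max\{\mingen_R M,\mingen_R N\}\leqslant \mingen_R(M\oplus N)\leqslant \mingen_R M + \mingen_R N$ yields $\cx(a\amalg b)=\max\{\cx(a),\cx(b)\}$, using that the sum of two polynomials of degree at most $s-1$ is again of degree at most $s-1$.

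For (c), the substantive assertion, I would apply $\Hom_\A(-,\Sigma^n b)$ to the triangle to produce the exact sequence
\[\Hom_\A(a_3,\Sigma^n b) \to \Hom_\A(a_2,\Sigma^n b) \to \Hom_\A(a_1,\Sigma^n b),\]
assemble over all $n$ into a three-term exact sequence of finitely generated graded $H$-modules (using Assumption~\ref{ass:standing}(3) and Noetherianity of $H$), and factor it through its image to obtain a short exact sequence
\[0\to K\to \Hom_\A^*(a_2,b)\to J\to 0\]
in which $K$ is a graded quotient of $\Hom_\A^*(a_3,b)$ and $J$ is a graded submodule of $\Hom_\A^*(a_1,b)$. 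Subadditivity of $\mingen_R$ in each degree then reduces matters to bounding $\mingen_R(K^n)$ at the $\cx(a_3)$-rate and $\mingen_R(J^n)$ at the $\cx(a_1)$-rate; the conclusion follows with $s=\max\{\cx(a_1),\cx(a_3)\}$ and $r_b$ the sum of the two resulting constants.

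The quotient bound for $K$ is immediate and pointwise: $\mingen_R(K^n)\leqslant \mingen_R(\Hom_\A(a_3,\Sigma^n b))$. The submodule bound for $J$ is the main obstacle, since over a Noetherian commutative base a submodule of a finitely generated module can have strictly more generators in a given degree, so no pointwise inequality is available. I would resolve this by invoking asymptotic monotonicity of $\mingen_R$-growth for finitely generated graded modules over the Noetherian graded-commutative ring $H$: the polynomial growth exponent is controlled by $\dim(H/\Ann_H(-))$, and since $\Ann_H(J)\supseteq \Ann_H(\Hom_\A^*(a_1,b))$, the exponent for $J$ is at most that of $\Hom_\A^*(a_1,b)$, furnishing a constant $r'$ with $\mingen_R(J^n)\leqslant r'|n|^{\cx(a_1)-1}$ asymptotically.
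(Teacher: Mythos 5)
The paper explicitly leaves these proofs to the reader, so there is no official proof to compare against; what follows is an assessment of correctness. Parts (a) and (b) are correct and are essentially the only natural argument: (a) follows from the degree shift together with the fact that $|n-i|$ and $|n|$ differ by a bounded constant, and (b) from the two-sided estimate $\max\{\mingen_R M,\mingen_R N\}\leqslant\mingen_R(M\oplus N)\leqslant\mingen_R M+\mingen_R N$.

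For (c), the outline is the right one: produce the short exact sequence of graded $H$-modules $0\to K\to\Hom_\A^*(a_2,b)\to J\to 0$ with $K$ a quotient of $\Hom_\A^*(a_3,b)$ and $J$ a submodule of $\Hom_\A^*(a_1,b)$, and use degreewise subadditivity $\mingen_R(\Hom_\A(a_2,\Sigma^n b))\leqslant\mingen_R(K^n)+\mingen_R(J^n)$. You also correctly single out the submodule $J$ as the genuine obstacle, since over a Noetherian $R$ a submodule can have strictly more generators in a given degree.

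The resolution of that obstacle, however, has a gap. You assert that the polynomial growth exponent of $n\mapsto\mingen_R(M^n)$ is ``controlled by $\dim(H/\Ann_H M)$'' and deduce from $\Ann_H J\supseteq\Ann_H\Hom_\A^*(a_1,b)$ that the exponent for $J$ is bounded by $\cx(a_1)-1$. But the Krull dimension of the support gives only an \emph{upper} bound for the growth exponent, and this bound can be strict: if $R=H^0$ has positive Krull dimension and $M$ is a nonzero finitely generated graded $H$-module concentrated in a single degree, then $\mingen_R(M^n)=0$ for $|n|\gg 0$ (growth exponent effectively $-\infty$) while $\dim(H/\Ann_H M)$ can be arbitrary. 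Consequently the chain $\gamma(J)\leqslant\dim(H/\Ann_H J)-1\leqslant\dim(H/\Ann_H\Hom_\A^*(a_1,b))-1$ does not terminate at $\cx(a_1)-1$ unless one also knows $\dim(H/\Ann_H\Hom_\A^*(a_1,b))-1\leqslant\cx(a_1)-1$, i.e.\ that the growth exponent of $\Hom_\A^*(a_1,b)$ actually \emph{equals} its support dimension. That equality fails in this generality, so the argument as stated does not close.

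A correct route to the needed submodule monotonicity, under the paper's auxiliary hypothesis that $R=H^0$ is local with maximal ideal $\frakm$ and residue field $k$, is to reduce modulo $\frakm$: then $\mingen_R(M^n)=\dim_k\bigl((M/\frakm M)^n\bigr)$, so the growth exponent of $\mingen_R(M^n)$ is exactly $\dim_{H/\frakm H}(M/\frakm M)-1$ (the degree of the Hilbert polynomial of the reduction). For $J\subseteq M$ one has
\[
\Supp_{H/\frakm H}(J/\frakm J)=\Supp_H(J)\cap V(\frakm H)\subseteq\Supp_H(M)\cap V(\frakm H)=\Supp_{H/\frakm H}(M/\frakm M),
\]
whence $\dim(J/\frakm J)\leqslant\dim(M/\frakm M)$ and therefore $\gamma(J)\leqslant\gamma\bigl(\Hom_\A^*(a_1,b)\bigr)\leqslant\cx(a_1)-1$. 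This is close in spirit to what you wrote, but the dimension that governs the growth is that of the $\frakm$-reduction $M/\frakm M$, not that of $H/\Ann_H M$; conflating the two is where the argument leaks. One should also note that for a general (non-local) Noetherian $R$ the invariant $\mingen_R$ is not simply $\dim_k$ of a reduction, so additional care (or the local hypothesis the paper invokes when convenient) is genuinely needed.
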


One of the focal points in this section is the following notion of a perfect object.
\begin{defin}
An object $a$ in $\A$ is a \emph{perfect object} if $V(a)\subseteq
V(H^+)$.
\end{defin}
Now we characterize the perfect objects as those of complexity zero.
\begin{prop}\label{prop:trivvar}
Let $a$ be in $\A$. Then the following are equivalent. 
\begin{enumerate}[\rm(a)]
\item $a$ is perfect object in $\A$.
\item $\cx(a)=0$.
\item $\forall\, b\in \A, \exists\, n_b\in \mathbb{N}$ such that
  $\Hom_\A(a,\Sigma^i(b))=(0)$ for $|i|\geqslant n_b$.
\item $\forall\, b\in \A, \exists\, m_b\in \mathbb{N}$ such that
  $\Hom_\A(b,\Sigma^i(a))=(0)$ for $|i|\geqslant m_b$. 
\end{enumerate}
\end{prop}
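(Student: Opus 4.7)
My plan is to factor all four conditions through the intermediate statement $(\ast)$: the graded $R$-module $\End_\A^*(a)$ has only finitely many non-zero components, which I will show is equivalent to the existence of $M\geqslant 0$ with $H^{\geqslant M}\subseteq A(a,a)$. Condition (a) is the statement $H^+\subseteq\sqrt{A(a,a)}$, while (c) and (d) both specialise to $(\ast)$ on taking $b=a$, so once $(\ast)\Leftrightarrow$ (a) is settled, the remaining work is to upgrade the specialised vanishing to the version for all $b$, and to match up the complexity definition with (c).

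First I would record the structural fact about $H$ exploited throughout. Since $H$ is Noetherian and positively graded, it is a finitely generated $H^0$-algebra; choosing homogeneous algebra generators of degrees bounded by some $d$, one has $H^n\subseteq (H^+)^{\lceil n/d\rceil}$. Combined with Noetherianness (so $(\sqrt{I})^N\subseteq I$ for some $N$ whenever $I$ is a graded ideal), this shows that $H^+\subseteq\sqrt{I}$ is equivalent to the existence of $M$ with $H^{\geqslant M}\subseteq I$. In particular (a) is equivalent to the existence of some $M$ with $H^{\geqslant M}\subseteq A(a,a)$.

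The equivalence $(\ast)\Leftrightarrow$ (a) now goes as follows. By Assumption~\ref{ass:standing}(3), $\End_\A^*(a)$ is finitely generated over the positively graded ring $H$ by elements of degrees in some range $[d_-,d_+]$, which already forces $\End_\A^n(a)=0$ for $n<d_-$. A relation $H^{\geqslant M}\subseteq A(a,a)$ then forces $\End_\A^n(a)=0$ for $n\geqslant d_++M$, yielding $(\ast)$. Conversely, if $(\ast)$ holds then the image of $\varphi_a\colon H\to \End_\A^*(a)$ is a graded submodule concentrated in bounded degrees, so $\varphi_a$ kills $H^{\geqslant M}$ for some $M$, which gives (a) via the previous paragraph.

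To close the circle among (a), (c), (d), I observe that each of (c) and (d) implies $(\ast)$ on taking $b=a$, so (c) $\Rightarrow$ (a) and (d) $\Rightarrow$ (a). Conversely, Proposition~\ref{prop:annihilator}(a) gives $V(a,b),V(b,a)\subseteq V(a)\subseteq V(H^+)$, so by the same radical-to-filtration argument $A(a,b)$ and $A(b,a)$ each contain some $H^{\geqslant M(b)}$, and applying the finite-generation argument of paragraph three to $\Hom_\A^*(a,b)$ and $\Hom_\A^*(b,a)$ yields (c) and (d). The equivalence (b) $\Leftrightarrow$ (c) is immediate from the definition of complexity with $s=0$: the bound $\mingen_R\Hom_\A(a,\Sigma^n(b))\leqslant r_b|n|^{-1}$ forces the integer-valued left-hand side to vanish once $|n|>r_b$, while (c) trivially gives such a bound with any $r_b\geqslant 0$. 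The main obstacle is the passage from the radical condition $H^+\subseteq\sqrt{A(a,a)}$ to the explicit filtration containment $H^{\geqslant M}\subseteq A(a,a)$; this is where both the Noetherian and positively-graded-finitely-generated hypotheses on $H$ are essential, and once that step is available the remainder is routine bookkeeping of finite generation over a positively graded ring.
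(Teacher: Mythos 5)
Your proof is correct and uses the same essential ingredients as the paper: the passage from the radical condition $H^+\subseteq\sqrt{A(a,a)}$ to a filtration containment $H^{\geqslant M}\subseteq A(a,a)$ via finite generation of $H$ over $R$, and the consequent bounded-degree vanishing via finite generation of $\Hom_\A^*(a,b)$ over $H$ (equivalently over $\End_\A^*(a)$).

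The only genuine organizational difference is that you factor all four conditions through the hub $(\ast)$, whereas the paper runs the cycle (b)\,$\Leftrightarrow$\,(c)\,$\Rightarrow$\,(d)\,$\Rightarrow$\,(a)\,$\Rightarrow$\,(c). Your hub structure is slightly cleaner and makes the symmetry between (c) and (d) visible at a glance: each specialises at $b=a$ to $(\ast)$, and both are recovered from $(\ast)$ by the same bounded-annihilator argument applied to $\Hom_\A^*(a,b)$ and $\Hom_\A^*(b,a)$ respectively, using $V(a,b),V(b,a)\subseteq V(a)$ from Proposition~\ref{prop:annihilator}. In contrast the paper goes (c)\,$\Rightarrow$\,(d) via finite generation of $\Hom_\A^*(b,a)$ over $\End_\A^*(a)$, which is the same tool in a different order. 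Your explicit observation that $A(a,a)=\Ker\varphi_a$ (since $h$ acts on $\End_\A^*(a)$ by composing with $\varphi_a(h)$, and taking $f=1_a$ shows the annihilator is no bigger) is a nice crystallisation of something the paper leaves implicit in its (d)\,$\Rightarrow$\,(a) step; it also quietly repairs the paper's slightly loose assertion that $H^{\geqslant m_a}\subseteq A(a,a)$, which really needs a larger cutoff, though this does not affect the radical and hence does not affect the conclusion. Both approaches buy the same theorem at the same cost; yours is marginally more modular.
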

\begin{proof} (b) implies (c): Assume that $\cx(a)=0$. This means that
  for all $b$ in $\A$ there exists $r_b$ in $\mathbb{R}$ such that
  $\mingen_R\left(\Hom_\A(a,\Sigma^n(b))\right) \leqslant r_b|n|^{-1}$
  for all $n$ with $|n|\gg 0$. This implies in turn that for all $b$
  in $\A$ there exists $n_b$ in $\mathbb{N}$ such that
  $\mingen_R\Hom_\A(a,\Sigma^n(b))=(0)$ for all $n$ such that
  $|n|\geqslant n_b$, which is the statement of (c).

  (c) implies (b): It follows immediately from the definition that
  $\cx(a)=0$.

(c) implies (d): Suppose that (c) holds. In particular,
$\Hom_\A(a,\Sigma^i(a))=(0)$ for all $i$ such that $|i|\geqslant n_a$ for
some integer $n_a$. Fix an object $b$ in $\A$. Since $\Hom_\A^*(b,a)$
is a finitely generated module over $\Hom_\A^*(a,a)$, say generated in
degrees $r_1<r_2<\cdots<r_t$ as a module over $\Hom_\A^*(a,a)$, then
we have that $\Hom_\A(b,\Sigma^i(a))=(0)$ for $i< r_1-n_a$ and for $i>
r_t+n_a$. The number $m_b=\max\{|r_1-n_a|, |r_t+n_1|\}+1$ depends only
on $b$ (when $a$ is fixed), and this number makes (d) hold true. 

(d) implies (a): Suppose that for all $b$ in $\A$ there exists $m_b$
in $\mathbb{N}$ such that $\Hom_\A(b,\Sigma^i(a))=(0)$ for all $i$
with $|i|\geqslant m_b$. In particular, $\Hom_\A(a,\Sigma^i(a))=(0)$ for
all $i$ with $|i|\geqslant m_a$. Then $H^{\geqslant m_a}$ is in $A(a,a)$. Since
$\sqrt{H^{\geqslant m_a}}=H^+$, it follows that $V(a)\subseteq V(H^+)$ and
$a$ is a perfect object.

(a) implies (c): Suppose that $a$ is a perfect object in $\A$. Then
$H^+\subseteq \sqrt{A(a,a)}$. Since $H$ is a finitely generated
algebra over $R$, we infer that $H^{\geqslant N}\subseteq A(a,a)$ for some
integer $N$ and $\Hom_\A^*(a,a)$  is a finitely generated $H/H^{\geqslant
  N}$-module. Hence $\Hom_\A(a,\Sigma^i(a))=(0)$ for all $i$ with
$|i|\geqslant n_a$ for some integer $n_a$. Since $\Hom_\A^*(a,b)$ is a
finitely generated module over $\Hom_\A^*(a,a)$ for all objects $b$ in
$\A$, it follows that there exists for all $b$ in $\A$ an integer
$n_b$ such that $\Hom_\A(a,\Sigma^i(b))=(0)$ for all $i$ with $|i|\geqslant
n_b$. Hence we have proved (c). This completes the proof of the
proposition. 
\end{proof}
\begin{remark}
If the triangulated category $\A$ has a generator $\frakg$ in the
sense that $\A=\Thick(\frakg)$, all of the above can be reformulated
in terms of $\frakg$ instead of for all objects $b$ in $\A$.
\end{remark}

We denote the full subcategory of $\A$ consisting of the perfect
objects by $\A^\perf$. The subcategory $\A^\perf$ is a thick
subcategory of $\A$, so we can form the Verdier quotient
$\A/\A^\perf$. Using this quotient we can define periodic objects as
follows.
\begin{defin}
  An object $a$ in $\A$ is \emph{periodic of period $n$} if $a$ is not
  perfect and $a\simeq \Sigma^n(a)$ in $\A/\A^\perf$ for some positive
  integer $n$, where $n$ is smallest possible.
\end{defin}
We have the following characterization of periodic objects.
\begin{prop}
  Assume that $R=H^0$ is a local ring.  Let $a$ be an object in
  $\A$. Then $a$ is a periodic object if and only if $\cx(a)=1$.
\end{prop}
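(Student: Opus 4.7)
The proof splits into a structural forward direction and a more technical reverse direction.

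$(\Rightarrow)$ Suppose $a$ is periodic of period $n$. Then $a\notin\A^\perf$ and there is an isomorphism $a\cong\Sigma^n(a)$ in $\A/\A^\perf$. Writing this isomorphism as a Verdier fraction $a\xleftarrow{s}x\xrightarrow{t}\Sigma^n(a)$ with $\cone(s)$ and $\cone(t)$ both in $\A^\perf$, I apply $\Hom_\A(-,b)$ to the defining triangles. By Proposition~\ref{prop:trivvar}, $\Hom_\A^*$ into or out of a perfect object has only finitely many nonzero graded components, which yields
\[
\Hom_\A(a,\Sigma^i(b))\cong\Hom_\A(x,\Sigma^i(b))\cong\Hom_\A(a,\Sigma^{i-n}(b))
\]
for $|i|\gg 0$ and every $b$ in $\A$. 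The function $i\mapsto\mingen_R\Hom_\A(a,\Sigma^i(b))$ is therefore eventually $n$-periodic, in particular bounded, giving $\cx(a)\leqslant 1$. Since $a$ is not perfect, Proposition~\ref{prop:trivvar} forces $\cx(a)\geqslant 1$, and hence $\cx(a)=1$.

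$(\Leftarrow)$ Assume $\cx(a)=1$. My plan is to produce a homogeneous $h\in H^{\geqslant 1}$ of some positive degree $p$ such that $V(\langle h\rangle)\cap V(a)\subseteq V(H^+)$; Proposition~\ref{prop:intersecvar} then forces $a\kos h$ to be perfect, and the Koszul triangle $a\to\Sigma^p(a)\to a\kos h\to\Sigma(a)$ furnishes an isomorphism $a\cong\Sigma^p(a)$ in $\A/\A^\perf$, exhibiting $a$ as periodic of some period dividing $p$. To construct $h$, set $k=R/\frakm_R$, $\bar H=H\otimes_R k$ and $\bar M=\End_\A^*(a)\otimes_R k$. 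Since $R$ is local, Nakayama gives $\dim_k\bar M^n=\mingen_R\Hom_\A(a,\Sigma^n(a))$, so $\cx(a)=1$ translates into $\dim_k\bar M^n$ being bounded. As $\bar H$ is connected graded Noetherian over $k$, Hilbert--Serre yields that the graded $\bar H$-module $\bar M$ has Krull dimension one; the minimal primes $\bar{\frakp}_1,\ldots,\bar{\frakp}_s$ of $\Supp\bar M$ other than the irrelevant ideal $\bar H^+$ are therefore finitely many, and each quotient $\bar H/\bar{\frakp}_i$ is a one-dimensional connected graded $k$-domain. By graded prime avoidance there is a homogeneous $\bar h\in\bar H^{\geqslant 1}$ of some positive degree $p$ with $\bar h\notin\bar{\frakp}_i$ for every $i$; then $\Supp(\bar M/\bar h\bar M)\subseteq V(\bar H^+)$, since in each one-dimensional graded domain $\bar H/\bar{\frakp}_i$ the only graded prime strictly above $(0)$ is the irrelevant ideal. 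Lift $\bar h$ to a homogeneous $h\in H$ of degree $p$.

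The main obstacle is verifying that this lifted $h$ actually satisfies $V(\langle h\rangle)\cap V(a)\subseteq V(H^+)$ in $\Spec H$, rather than merely in $\Spec\bar H$. The construction gives $H^{\geqslant n_0}\cdot N\subseteq\frakm_R\cdot N$ for $N:=\End_\A^*(a)/h\cdot\End_\A^*(a)$ and some integer $n_0$. Given a homogeneous prime $\frakq\in V(\langle h\rangle)\cap V(a)$ with $\frakq\not\supseteq H^+$, one picks $y\in H^{\geqslant 1}\setminus\frakq$; sufficiently high powers satisfy $y^kN\subseteq\frakm_R N$, and invertibility of $y$ in $H_\frakq$ forces $N_\frakq\subseteq\frakm_R N_\frakq$. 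When $\frakm_R\subseteq\frakq$, Nakayama immediately yields $N_\frakq=0$, contradicting $\frakq\in\Supp N$. The remaining case $\frakm_R\not\subseteq\frakq$ requires an additional Krull-intersection or Artin--Rees type argument, exploiting the graded structure to show that a sufficiently high power of $y$ already lies in $\Ann N$, to conclude $N_\frakq=0$ as well. This descent from the reduction $\bar H$ back to $H$, made possible only because $R$ is local, is the technical heart of the argument.
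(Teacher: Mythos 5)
Your forward direction is fine and matches the paper's argument in substance: an isomorphism $a\cong\Sigma^n(a)$ in $\A/\A^\perf$ is witnessed by triangles whose third terms are perfect, so $\Hom_\A(a,\Sigma^i(b))$ becomes $n$-periodic for $|i|\gg 0$, hence bounded, and non-perfectness gives $\cx(a)\geqslant 1$ by Proposition~\ref{prop:trivvar}.

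The reverse direction, however, takes a different route from the paper and the gap you flag is a genuine one that the tools you name cannot close. You want $V(a\kos h)=V(\langle h\rangle)\cap V(a)\subseteq V(H^+)$, i.e.\ $\Supp_H N\subseteq V(H^+)$ for $N=\End^*_\A(a)/h\End^*_\A(a)$, and you try to deduce this from $\Supp_{\bar H}(\bar M/\bar h\bar M)\subseteq V(\bar H^+)$ after reducing modulo $\frakm$. But the reduction only controls the primes of $H$ that contain $\frakm$. For a graded prime $\frakq\in\Supp N$ with $\frakq\not\supseteq\frakm$, the ideal $\frakm H_\frakq$ is the unit ideal, so the containment $N_\frakq\subseteq\frakm^jN_\frakq$ carries no information, and neither Nakayama nor Krull intersection nor Artin--Rees applies because $\frakm H_\frakq$ is not contained in the Jacobson radical of $H_\frakq$. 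This is a real case: when $R$ has positive Krull dimension (e.g.\ the complete intersection setting of Section~\ref{section:7}, where $H^0=R$) there are many homogeneous primes of $H$ meeting $H^0$ in a non-maximal prime, and nothing in the mod-$\frakm$ data rules them out of $\Supp N$.

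The paper avoids the issue by never arguing at the level of $\Spec H$. It chooses $\bar h$ not merely to avoid the minimal primes of $\Supp\bar M$, but to be a regular element on the high-degree tail $X_{\geqslant w}$ of $X=\End^*_\A(a)\otimes_R R/\frakm$ (with $w$ chosen so that $(0:_X\overline{H^+})$ vanishes in degrees $\geqslant w$). The bound $\cx(a)=1$ caps $\dim_{R/\frakm}X^i$, so the injective maps $X^i\xrightarrow{\bar h}X^{i+|\bar h|}$ are eventually isomorphisms; lifting to $h$ and using Nakayama plus a Noetherian stabilization on the chain of tails of $\End^*_\A(a)$, one gets that $h\cdot(-)$ is eventually an isomorphism on $\End^*_\A(a)$. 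The Koszul triangle then gives $\Hom_\A^i(a,a\kos h)=0$ for $|i|\gg 0$, a thick-subcategory propagation (since $a\kos h\in\Thick(a)$) gives $\Hom^i_\A(a\kos h,a\kos h)=0$ for $|i|\gg 0$, and Proposition~\ref{prop:trivvar}~(c)$\Rightarrow$(a) yields perfectness of $a\kos h$ without any localization at primes not over $\frakm$. If you wish to salvage your writeup, replace the support-variety containment by this Hom-vanishing criterion: take $\bar h$ regular on the high-degree tail (not merely prime-avoiding) and finish via Proposition~\ref{prop:trivvar} rather than via $\Spec H$.
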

\begin{proof}
Suppose that $a$ is a periodic object in $\A$, say $a\simeq
\Sigma^n(a)$ in $\A/\A^\perf$ for some non-zero integer $n$. This
means that there exist exact triangles in $\A$ of the form $a'\to a\to
p\to \Sigma(a')$ and $a'\to \Sigma^n(a)\to p'\to \Sigma(a')$ for some
perfect objects $p$ and $p'$. It follows that for $|i|\gg 0$ we have
that $\Hom_\A(a,\Sigma^i(a))\simeq \Hom_\A(a,\Sigma^{i+n}(a))$ and
hence that $\cx(a)\leqslant 1$. Since $a$ is not perfect, that is, $\cx(a)\geqslant
1$, we infer that $\cx(a)=1$.

Conversely, suppose that $\cx(a)=1$ and let $R=H^0$ be a local ring
with maximal ideal $\frakm$. Let $\overline{H}=H\otimes_R R/\frakm$,
which is a homomorphic image of $H$ and hence Noetherian. Let
$X=\Hom^*_\A(a,a)\otimes_R R/\frakm$, which is a finitely generated
$\overline{H}$-module and consequently Noetherian, since
$\Hom_\A^*(a,a)$ is a finitely generated $H$-module. Denote the
inclusion map $H^{+}\hookrightarrow H$ by $\nu$, and
$\overline{H^+}=\Im(\nu\otimes_R 1_{R/\frakm})=\overline{H}^{\geqslant
  1}\subset \overline{H}$. Consider the $\overline{H}$-submodule
\[(0:_X \overline{H^+})= \{ x\in X\mid \overline{H^+}x=(0)\}\] of $X$,
which is a finitely generated $\overline{H}$-module. Since
$\overline{H^+}\cdot (0:_X \overline{H^+})=(0)$, the module
$(0:_X \overline{H^+})$ is a finitely generated $R/\frakm$-module as
$\overline{H}/\overline{H^+}\simeq R/\frakm$. This implies that
$(0:_X \overline{H^+})$ only lives in a finite number of
degrees. Hence there exists an integer $w$ such that
$(0:_X \overline{H^+})_i=(0)$ for $i\geqslant w$.  Since $H$ is
positively graded and $\Hom_\A^*(a,a)$ is a finitely generated
$H$-module, $\Hom_\A(a,\Sigma^i(a))=(0)$ for $i \ll 0$. Therefore,
since $\cx(a)=1$, we infer that $\Hom_\A(a,\Sigma^i(a))\neq (0)$ for
infinitely many $i\gg 0$. This implies that $X_{\geqslant w}\neq (0)$,
and the set of associated primes $\Ass_{\overline{H}} X_{\geqslant w}$
is a finite set consisting of graded prime ideals. The union of these
primes is the set of homogeneous zero-divisors on $X_{\geqslant
  w}$. If $\overline{H^+}\subseteq \mathfrak{p}$ for some graded prime
in $\Ass_{\overline{H}} X_{\geqslant w}$, it follows that
$\overline{H^+}$ annihilates some non-zero element of
$X_{\geqslant w}$, which is a contradiction by the choice of $w$. We
conclude that $\overline{H^+}$ is not contained in any of the prime
ideals in $\Ass_{\overline{H}} X_{\geqslant w}$. The Prime Avoidance
Lemma implies that there exists a homogeneous
$X_{\geqslant w}$-regular element $h'$ in $\overline{H^+}$, that is,
$X_i\extto{h'} X_{i+|h'|}$ is an $(R/\frakm)$-monomorphism for
$i\geqslant w$. This gives rise to the following commutative diagram
\[\xymatrix{
& 
\Hom_\A^{\geqslant w}(a,a)\ar[d]\ar[r]^-h & 
\Hom_\A^{\geqslant w+|h|}(a,a)\ar[d]\\
0\ar[r] & 
\Hom_\A^{\geqslant w}(a,a)\otimes_R R/\frakm \ar[r]^-{h'}\ar[d] & 
\Hom_\A^{\geqslant w+|h'|}(a,a)\otimes_R R/\frakm\ar[d] \\
 & 0 & 0  }\]
where $h$ is an inverse image of $h'$ in $H$. Since $\cx(a)=1$,
there is a positive integer $r_a$ and epimorphisms of $R$-modules
$R^{r_a}\to \Hom_\A(a,\Sigma^n(a))$ for $n$ with $|n|\gg 0$. This
shows that the dimension of the $R/\frakm$-vectorspaces
$\Hom_\A^i(a,a)\otimes_R R/\frakm$ are bounded by $r_a$ for $|i|\gg 0$. Hence
the induced maps
\[\Hom_\A^i(a,a)\otimes_R R/\frakm\extto{h'} 
\Hom_\A^{i+|h'|}(a,a)\otimes_R R/\frakm\] are isomorphisms for
$|i|\gg 0$.  Since each graded piece $\Hom_\A^i(a,a)$ is a
finitely generated $R$-module, it follows from the commutative diagram
above that the map $\Hom_\A^i(a,a)\extto{h}
\Hom_\A^{i+|h|}(a,a)$ is an epimorphism for all $|i|\gg 0$. Let
$M_t= \Hom_\A^{\geqslant N+(t-1)|h|}(a,a)$ for integers $t$ and
$N$. Consider the map $M_t\extto{h\cdot -} M_{t+1}$. We have the
commutative diagram
\[\xymatrix{
0\ar[dr] & & & & \\
  & \Ker(h^2\cdot-)\ar[dr] & & & \\
0\ar[r] & \Ker(h\cdot -) \ar[r]\ar@{^(->}[u] 
        & M_t\ar[r]^{h\cdot-} \ar[dr]_{h^2\cdot-} 
        & M_{t+1}\ar[r]\ar[d]^{h\cdot-} 
        & 0\\
 & & & M_{t+2}\ar[d]\ar[dr] & \\
 & & & 0 & 0}\]
for some $N\gg 0$. Since $M_t$ is a finitely generated $H$-module for
all $t$ and $H$ is Noetherian, there exists an integer $L$ such that
$M_t\extto{h\cdot -} M_{t+1}$ is an isomorphism for all
$t\geqslant L$. Using the triangle
$a\extto{h\cdot -} \Sigma^{|h|}(a)\to a\kos h\to \Sigma(a)$ and the
long exact sequence induced from it, it follows that
$\Hom_\A^i(a,a\kos h)=(0)$ for $|i|\gg 0$.  Since $a\kos h$ is in the
thick subcategory generated by $a$, we infer that
$\Hom_\A^i(a\kos h,a\kos h)=(0)$ for $|i|\gg 0$ and $\cx(a\kos
h)=0$. It then follows that $a\kos h$ is a perfect object and that $a$
is a periodic object in $\A$.
\end{proof}

\section{Function objects}\label{section:5}

Let us begin with explaining one example of a function object. Let
$\C$ be the triangulated tensor category
$(\bfD^-(\L^e),-\otimes^{\mathbb{L}}_\L-,\L)$ for a finite dimensional
$k$-algebra $\L$ where $k$ is a field, and let $\A=\bfD^-(\mod\L)$.
Then $\C$ acts on $\A$, and we have that
\[\Hom_\A(B\otimes_\L^{\mathbb{L}}A,C) \simeq
\Hom_\A(A,\mathbb{R}\Hom_\L(B,C))\] for all objects $A$ and $C$ in
$\A$ and $B$ in $\C$. Then $\mathbb{R}\Hom_\L(-,-)\colon
\C^\op\times\A\to \A$ is called a left function object for the action
of $\C$ on $\A$. This section is devoted to recalling the definition
of and giving some elementary properties of such function objects. In
the next section we discuss the theory of support when the action of
$\C$ on $\A$ has a left function object.

Throughout this section let
$\C=(\C,\otimes,\frake,\fraka,\frakl,\frakr,T,\lambda,\rho)$ be a
triangulated tensor category acting on a small triangulated category
$\A=(\A,\Sigma)$. First we give the definition of a left function
object for an action of $\C$ on $\A$.
\begin{defin}
\begin{enumerate}[\rm(a)]
\item
A \emph{left function object} $F'$ for the action of $\C$ on $\A$
is a functor $F'\colon \C^\op\times \A\to \A$ such that
\begin{enumerate}[\rm(i)]
\item $F'(x,-)\colon \A\to \A$ is a covariant functor for each
  object $x$ in $\C$. 
\item $F'(-,a)\colon \C\to \A$ is a contravariant functor for
  each object $a$ in $\A$. 
\item there is an isomorphism
\[\Hom_\A(x*a,b)\to \Hom_\A(a,F'(x,b))\]
natural in all three variables. 
\end{enumerate}
\item A left function object $F'\colon \C^\op\times \A\to \A$
  is \emph{compatible}
\begin{enumerate}[\rm(i)]
\item \emph{with the triangulation of $\C$} if for each
  triangle $x\extto{u} y\extto{v} z\extto{w} T(x)$ in $\C$ and each
  object $a$ in $\A$, then 
\[F'(T(x),a)\extto{-F'(w,1)} F'(z,a)\extto{F'(v,1)} F'(y,a)\extto{F'(u,1)}
F'(x,a)\] 
is a triangle in $\A$. 
\item \emph{with the triangulation of $\A$} if for each
  triangle $a\extto{u} b\extto{v} c\extto{w} \Sigma(a)$ in $\A$ and
  each object $x$ in $\C$, then  
\[F'(x,a)\extto{F'(1,u)} F'(x,b)\extto{F'(1,v)} F'(x,c)\extto{F'(1,w)}
F'(x,\Sigma(a))\] 
is a triangle in $\A$. 
\end{enumerate}
\end{enumerate}
\end{defin}
In Section~\ref{section:3} we briefly discussed one occurrence of a
right function object in connection with the assumptions of the setup
for support varieties via the action of the graded endomorphism ring
$\End_\C^*(\frake)$. Next we give the precise definition of these
function objects.
\begin{defin}
\begin{enumerate}[\rm(a)]
\item
A \emph{right function object} $F''$ for the action of $\C$ on $\A$
is a functor $F''\colon \A^\op\times \A\to \C$ such that
\begin{enumerate}[\rm(i)]
\item $F''(a,-)\colon \A\to \C$ is a covariant functor for each
  object $a$ in $\A$. 
\item $F''(-,a)\colon \A^\op\to \C$ is a contravariant functor for
  each object $a$ in $\A$. 
\item there is an isomorphism
\[\Hom_\A(x*a,b)\to \Hom_\C(x,F''(a,b))\]
natural in all three variables. 
\end{enumerate}
\item A right function object $F''\colon \A^\op\times \A\to \C$
  is \emph{compatible with the triangulation of $\A$} 
\begin{enumerate}[\rm(i)]
\item if for each triangle
  $x\extto{u} y\extto{v} z\extto{w} \Sigma(x)$ in $\A$ and each object
  $a$ in $\A$, then
\[F''(T(x),a)\extto{-F''(w,1)} F''(z,a)\extto{F''(v,1)}
  F''(y,a)\extto{F''(u,1)} F''(x,a)\] 
is a triangle in $\C$. 
\item and if for each triangle $a\extto{u} b\extto{v} c\extto{w}
  \Sigma(a)$ in $\A$ and each object $x$ in $\A$, then
\[F''(x,a)\extto{F''(1,u)} F''(x,b)\extto{F''(1,v)}
  F''(x,c)\extto{F''(1,w)} F''(x,\Sigma(a))\] is a triangle in $\C$.
\end{enumerate}
\end{enumerate}
\end{defin}
If the action of $\C$ on $\A$ has a left or a right function object,
then each of them is unique up to isomorphism as stated next.
\begin{prop}
\begin{enumerate}[\rm(a)]
\item If the action of $\C$ on $\A$ has a left function object,
  then it is unique up to isomorphism.
\item If the action of $\C$ on $\A$ has a right function object,
  then it is unique up to isomorphism.\qed
\end{enumerate}
\end{prop}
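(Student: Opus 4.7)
The plan is to recognize both statements as instances of uniqueness of right adjoint functors, packaged by the Yoneda lemma. In (a) the defining isomorphism $\Hom_\A(x*a,b)\simeq\Hom_\A(a,F'(x,b))$ presents $F'(x,-)\colon \A\to\A$ as a right adjoint to $x*-\colon \A\to\A$ for each $x\in\C$; in (b) the corresponding identification is that $F''(a,-)\colon \A\to\C$ is a right adjoint to $-*a\colon \C\to\A$ for each $a\in\A$. From this perspective, existence and uniqueness of the desired natural isomorphism up to canonical choice is a formal consequence.

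Concretely, for (a) let $F'$ and $G'$ be two left function objects with structure isomorphisms $\eta^{F'}$ and $\eta^{G'}$. The composite
\[\eta^{G'}_{x,a,b}\comp(\eta^{F'}_{x,a,b})^{-1}\colon \Hom_\A(a,F'(x,b))\to\Hom_\A(a,G'(x,b))\]
is an isomorphism natural in $x$, $a$ and $b$. Fixing $x$ and $b$ and varying $a$, the Yoneda lemma supplies a unique morphism $\theta_{x,b}\colon F'(x,b)\to G'(x,b)$ representing this isomorphism of hom-sets, and $\theta_{x,b}$ is itself an isomorphism because the hom-level map is. Naturality of the composite in $b$ (resp.\ contravariantly in $x$) translates, again by Yoneda, to commutativity of the squares comparing $\theta_{x,b}$ and $\theta_{x,b'}$ (resp.\ $\theta_{x,b}$ and $\theta_{x',b}$) with the action of $F'$ and $G'$ on morphisms. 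This identifies $\theta$ as a natural isomorphism of bifunctors $\C^{\op}\times\A\to\A$.

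Part (b) proceeds by exactly the same scheme, with Yoneda applied inside $\C$ rather than inside $\A$: the composite $\Hom_\C(x,F''(a,b))\to\Hom_\C(x,G''(a,b))$ is natural in all three arguments, and Yoneda in $\C$ yields a unique isomorphism $F''(a,b)\to G''(a,b)$ whose naturality in $a$ and $b$ follows from the joint naturality of the defining hom-isomorphism. The only step requiring any care in either part is verifying that the Yoneda-induced morphisms intertwine the functorial structure in the remaining two variables, but this commutativity is automatic from the joint naturality just recalled. I therefore expect no real obstacle: the proposition is, in essence, a repackaging of the standard uniqueness of right adjoint functors, and the proof is purely formal.
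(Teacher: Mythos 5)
Your proof is correct, and since the paper leaves this proposition to the reader (the \qed appears with no argument given), your Yoneda-lemma / uniqueness-of-adjoints argument is precisely the implicit reasoning the authors have in mind. The one-variable-at-a-time application of Yoneda followed by the observation that joint naturality of the adjunction isomorphisms forces naturality of the induced comparison maps in the remaining variables is the standard and complete way to spell this out.
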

The tensor category $\C$ acting on itself, may have both a left and a
right function object. They need not be isomorphic. However, the
occasions when they are isomorphic can be characterized as
follows. Recall that the tensor product $-\otimes-$ in $\C$ is
\emph{symmetric} if $x\otimes y\simeq y\otimes x$ via a natural
isomorphism in both $x$ and $y$.
\begin{prop}
Suppose that $\C$ acting on itself has a left and a right function
object.  Then the tensor product in $\C$ is symmetric if and only if
the left and the right function objects for $\C$ are isomorphic.\qed
\end{prop}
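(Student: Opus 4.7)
The plan is to prove both directions by a straightforward Yoneda-lemma argument, using the defining adjunctions of the left and right function objects.

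First, I unpack what the two function objects mean when $\C$ acts on itself via $- \otimes -$. The left function object gives a natural isomorphism
\[
\Hom_\C(x\otimes a, b) \;\cong\; \Hom_\C\bigl(a, F'(x,b)\bigr),
\]
so $F'(x,-)$ is right adjoint to $x\otimes -$. The right function object gives
\[
\Hom_\C(x\otimes a, b) \;\cong\; \Hom_\C\bigl(x, F''(a,b)\bigr),
\]
so $F''(a,-)$ is right adjoint to $-\otimes a$. Both $F'$ and $F''$ are functors $\C^\op\times\C\to\C$, so the question of whether they are isomorphic is well-posed.

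For the forward direction, I would assume that $\otimes$ is symmetric, i.e.\ there is a natural isomorphism $\sigma_{x,a}\colon x\otimes a \xrightarrow{\sim} a\otimes x$. Then for all $a,b,x$ in $\C$ I would compose the chain
\[
\Hom_\C\bigl(a, F'(x,b)\bigr) \cong \Hom_\C(x\otimes a, b) \xrightarrow{\sigma^*} \Hom_\C(a\otimes x, b) \cong \Hom_\C\bigl(a, F''(x,b)\bigr),
\]
natural in $a$. By the Yoneda lemma applied to the variable $a$, this produces an isomorphism $F'(x,b) \cong F''(x,b)$, and naturality in $x$ and $b$ is inherited from naturality of the adjunctions and of $\sigma$.

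For the converse, I would assume a natural isomorphism $\theta\colon F' \xrightarrow{\sim} F''$ of functors $\C^\op\times \C\to \C$, and reverse the previous reasoning:
\[
\Hom_\C(x\otimes a, b) \cong \Hom_\C\bigl(a, F'(x,b)\bigr) \xrightarrow{\theta_*} \Hom_\C\bigl(a, F''(x,b)\bigr) \cong \Hom_\C(a\otimes x, b),
\]
natural in all three variables. Fixing $x$ and $a$ and varying $b$, the Yoneda lemma yields a natural isomorphism $x\otimes a \xrightarrow{\sim} a\otimes x$; its naturality in $x$ and $a$ follows because the three isomorphisms above are natural in those variables. This is exactly symmetry of the tensor product in the sense defined just before the proposition.

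The only real care needed is bookkeeping of the Yoneda step and naturality in all three slots; there is no substantial obstacle, since the statement is essentially a formal consequence of the uniqueness of adjoints applied to $x\otimes -$ versus $-\otimes x$.
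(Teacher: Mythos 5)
Your argument is correct, and since the paper states this proposition with an immediate \qed (leaving the verification to the reader), there is no separate proof in the paper to compare against. The route you take — identifying $F'(x,-)$ and $F''(x,-)$ as right adjoints to $x\otimes-$ and $-\otimes x$ respectively, transporting along a symmetry, and invoking Yoneda (equivalently, uniqueness of adjoints) — is the natural and expected one; the only blemish is a minor direction slip in the middle arrow ($\sigma^*$ should be precomposition by $\sigma_{a,x}\colon a\otimes x\to x\otimes a$, not $\sigma_{x,a}$), which you already flagged as a bookkeeping matter.
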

Next we point out how a left function object respects the triangulated
and the tensor structure in $\C$. The following proposition only deals
with left function objects, so we leave it to the reader to formulate
the corresponding results for right function objects.
\begin{prop}\label{prop:leftfuncobj}
Let $F'$ be a left function object for the action of $\C$ on $\A$. 
\begin{enumerate}[\rm(a)]
\item There is a natural isomorphism $F'(x,\Sigma(a))\simeq
  \Sigma(F'(x,a))$ for all objects $x$ in $\C$ and $a$ in $\A$. 
\item There is a natural isomorphism $F'(T(x),a)\simeq
  F'(x,\Sigma^{-1}(a))$ for all objects $x$ in $\C$ and $a$ in $\A$.  
\item There is a natural isomorphism $F'(y\otimes x,a)\simeq
  F'(x,F'(y,a))$ for all objects $x$ and $y$ in $\C$, and $a$ in
  $\A$. 
\item There is a natural isomorphism $a\simeq F'(\frake,a)$ for all
  objects $a$ in $\A$. \qed
\end{enumerate}
\end{prop}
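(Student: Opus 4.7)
The plan is to establish each of the four isomorphisms via the Yoneda lemma, using the defining adjunction
\[\Hom_\A(x*a,b)\simeq \Hom_\A(a,F'(x,b))\]
together with the structural natural isomorphisms $\frakl'$, $\alpha$, $\lambda'$, $\rho'$ of the action. In each case I produce a natural isomorphism of the representable functors $\Hom_\A(-,F'(?,?))$ and then invoke Yoneda, which also gives naturality in the remaining variables.

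For (d), the computation is almost immediate: for any $b$ in $\A$,
\[\Hom_\A(b,F'(\frake,a))\simeq \Hom_\A(\frake*b,a)\simeq \Hom_\A(b,a),\]
where the first isomorphism is the adjunction and the second is induced by $\frakl'_b\colon \frake*b\to b$. Yoneda gives a natural isomorphism $a\simeq F'(\frake,a)$. For (c), the same strategy with $\alpha$ yields
\[\Hom_\A(b,F'(y\otimes x,a))\simeq \Hom_\A((y\otimes x)*b,a)\simeq \Hom_\A(y*(x*b),a),\]
and applying the adjunction twice rewrites this as $\Hom_\A(x*b,F'(y,a))\simeq \Hom_\A(b,F'(x,F'(y,a)))$.

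For (a) and (b), the key is to move a shift across the $*$ using $\lambda'$ and $\rho'$. For (a):
\[\Hom_\A(b,F'(x,\Sigma(a)))\simeq \Hom_\A(x*b,\Sigma(a))\simeq \Hom_\A(\Sigma^{-1}(x*b),a),\]
and $\lambda'_{x,\Sigma^{-1}(b)}\colon x*\Sigma^{-1}(b)\xrightarrow{\sim} \Sigma^{-1}(x*b)$ (built from $\lambda'$ and $\Sigma^{-1}$) lets us rewrite this as $\Hom_\A(x*\Sigma^{-1}(b),a)\simeq \Hom_\A(\Sigma^{-1}(b),F'(x,a))\simeq \Hom_\A(b,\Sigma(F'(x,a)))$. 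For (b), similarly using $\rho'_{x,b}\colon T(x)*b\xrightarrow{\sim}\Sigma(x*b)$,
\[\Hom_\A(b,F'(T(x),a))\simeq \Hom_\A(T(x)*b,a)\simeq \Hom_\A(\Sigma(x*b),a)\simeq \Hom_\A(x*b,\Sigma^{-1}(a)),\]
which in turn equals $\Hom_\A(b,F'(x,\Sigma^{-1}(a)))$.

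The main obstacle, such as there is one, lies not in producing the isomorphisms but in verifying that they are natural in \emph{all} the relevant variables simultaneously; the Yoneda argument delivers naturality in $b$ for free, while naturality in $x$, $y$ or $a$ must be read off from the naturality of $\alpha$, $\frakl'$, $\lambda'$, $\rho'$ and of the adjunction bijection. The arguments do not require the compatibility of $F'$ with the triangulation, only the coherence data of the action, so the statements hold for any left function object in the sense of the definition above.
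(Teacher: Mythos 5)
Your proof is correct, and it is the natural argument. The paper states this proposition with no proof (the \qed appears immediately after the statement), and the Yoneda computation you give — transporting the structural data $\frakl'$, $\alpha$, $\lambda'$, $\rho'$ of the action across the defining adjunction $\Hom_\A(x*b,-)\simeq\Hom_\A(b,F'(x,-))$ — is exactly the standard verification the authors are implicitly delegating to the reader, and your closing remark that the compatibility of $F'$ with the triangulations is not needed here is an accurate observation.
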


\section{Support varieties for actions with a function object}\label{section:6}

This section is devoted to studying support varieties in a small
triangulated category $\A$ having a triangulated tensor category $\C$
acting on $\A$ with a left function object.  We indicate how this
restricts what we can expect to classify, and how we obtain some
control on the homomorphisms between one object and shifts of another
object in $\A$.

Throughout we keep the Assumption~\ref{ass:standing} and add for this
section the following. 
\begin{assumption}\label{ass:function}
For $\C$ and $\A$ the following holds:
\begin{enumerate}[(1)]\setcounter{enumi}{3}
\item There exists a left function object $F'\colon
  \C^\op\times\A\to \A$ for the action of $\C$ on $\A$.
\item The functor $-*a\colon \C\to \A$ is an exact functor for
  all objects $a$ in $\A$.
\end{enumerate}
\end{assumption}
In the presence of a left function object $F'$ for the action of $\C$
on $\A$ we denote the corresponding natural adjunction isomorphism by
\[\varphi=\varphi_{x,a,b}\colon \Hom_\A(x*a,b)\to \Hom_\A(a,F'(x,b))\] 
for all objects $a$ and $b$ in $\A$ and all $x$ in $\C$. 

\begin{remark}
(1)  It is tempting to believe that having this adjunction implies that
\[V(x * a, b) \subseteq V(a) \cap V(b).\]  However this is in general
not true as pointed out in \cite{BSS}.  But for $x = e\kos h$ we do
have $V(e\kos h * a, b) \subseteq V(a) \cap V(b)$.

(2)  When $\A$ is endowed with a theory of support varieties, the thick
subcategories $\X$ of $\A$ are sometimes given as
$\X_V=\{X\in\A\mid V(X)\subseteq V\}$ for some homogeneous subvariety
$V$ of $\Spec H$.  By the above remark one cannot expect that the
subcategories $\X_V$ are tensor subcategories of $\C$.  This is the
case for thick subcategories of the stable category of a $p$-group.
\end{remark}

Next we show that support varieties give us some control of the
homomorphisms between objects in $\A$. To this end we first need to
describe how a function object for the action acts on objects in $\A$.
\begin{lem}\label{lem:functionobjecttilde}
  Let $F'$ be a left function object for the action of $\C$ on $\A$,
  which is compatible with the triangulation in $\C$. Let $h\colon
  \frake\to T^p(\frake)$. Then $F'(\frake\kos h,a)$ is in $\Thick(a)$
  for all $a$ in $\A$.
\end{lem}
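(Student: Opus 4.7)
The plan is to exploit the triangle defining the Koszul object $\frake\kos h$ in $\C$ together with the compatibility of $F'$ with the triangulation of $\C$, and then identify the outer terms of the resulting triangle in $\A$ using Proposition~\ref{prop:leftfuncobj}.

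First I would start with the defining triangle
\[
\frake\xrightarrow{h} T^p(\frake)\to \frake\kos h\to T(\frake)
\]
in $\C$. Since $F'$ is compatible with the triangulation of $\C$, applying $F'(-,a)$ produces a triangle
\[
F'(T(\frake),a)\to F'(\frake\kos h,a)\to F'(T^p(\frake),a)\to F'(\frake,a)
\]
in $\A$ (up to the sign on the connecting morphism, which is irrelevant for membership in a thick subcategory).

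Next I would identify the outer two terms. By Proposition~\ref{prop:leftfuncobj}(d) we have $F'(\frake,a)\simeq a$, and by Proposition~\ref{prop:leftfuncobj}(b) (applied once, respectively $p$ times, or combined with part (a)) we get natural isomorphisms
\[
F'(T(\frake),a)\simeq \Sigma^{-1}(a)\qquad\text{and}\qquad F'(T^p(\frake),a)\simeq \Sigma^{-p}(a).
\]
So the above triangle in $\A$ becomes
\[
\Sigma^{-1}(a)\to F'(\frake\kos h,a)\to \Sigma^{-p}(a)\to a.
\]

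Finally, since $\Thick(a)$ is a triangulated subcategory of $\A$ containing $a$, it is closed under shifts and extensions; both $\Sigma^{-1}(a)$ and $\Sigma^{-p}(a)$ lie in $\Thick(a)$, hence so does the middle term $F'(\frake\kos h,a)$. There is no serious obstacle here once the two identifications of $F'(T^i(\frake),a)$ with $\Sigma^{-i}(a)$ are in hand; the only thing to be slightly careful about is matching signs and orientations of the triangles, but this does not affect the conclusion.
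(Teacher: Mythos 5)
Your proof is correct and follows essentially the same route as the paper: apply $F'(-,a)$ to the defining triangle for $\frake\kos h$ using compatibility with the triangulation of $\C$, identify the outer terms as shifts of $a$ via Proposition~\ref{prop:leftfuncobj}, and conclude by closure of $\Thick(a)$ under shifts and extensions.
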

\begin{proof}
Let $h\colon \frake\to T^p(\frake)$, and let $a$ be in $\A$. Then
$F'(-,a)$ applied to the triangle $\frake\extto{h} T^p(\frake)\to
\frake\kos h\to T(\frake)$ in $\C$ gives rise to the triangle
\[F'(T(\frake),a)\to F'(\frake\kos h,a)\to F'(T^p(\frake),a)\to
F'(\frake,a)\] 
in $\A$. Since $F'(\frake,a)\simeq a$ and $F'(T^p(\frake),a)\simeq
\Sigma^{-p}(a)$ from Proposition~\ref{prop:leftfuncobj}, it follows
directly that $F'(\frake\kos h,a)$ is in $\Thick(a)$. 
\end{proof}
The following lemma is the last preliminary result we need before
proving the main results of this section.  
\begin{lem}\label{lem:functionobjectfactoring}
  Let $F'$ be a left function object for the action of $\C$ on $\A$.
  Fix an object $c$ in $\C$, and let $a$ and $b$ be two objects in
  $\A$. Let $\X$ be a thick subcategory of $\A$ such that
  $c*\X \subseteq \X$ and $F'(c,\X)\subseteq \X$. Then a morphism
  $f\colon c*a\to b$ factors through an object in $\X$ if and only if
  $\varphi(f)\colon a\to F'(c,b)$ factors through an object in $\X$.
\end{lem}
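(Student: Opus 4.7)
The plan is a symmetric diagram chase using the naturality of the adjunction isomorphism $\varphi$ in both of its arguments, together with the closure hypotheses on $\X$. First I record the two naturality statements I will use: for morphisms $\alpha\colon a'\to a$ in $\A$ and $\beta\colon b\to b'$ in $\A$, and any $h\colon c*a\to b$,
\[
\varphi(h\comp(1_c*\alpha))=\varphi(h)\comp\alpha,\qquad
\varphi(\beta\comp h)=F'(c,\beta)\comp\varphi(h).
\]

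For the forward direction, suppose $f\colon c*a\to b$ factors as $c*a\xrightarrow{h} y\xrightarrow{g} b$ with $y\in\X$. Applying naturality in $b$ to the factorization $f=g\comp h$ gives $\varphi(f)=F'(c,g)\comp\varphi(h)$, which is a factorization of $\varphi(f)$ through $F'(c,y)$. Since $F'(c,\X)\subseteq\X$, the object $F'(c,y)$ lies in $\X$, so $\varphi(f)$ factors through $\X$.

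For the converse, suppose $\varphi(f)=\beta\comp\alpha$ with $\alpha\colon a\to z$ and $\beta\colon z\to F'(c,b)$, $z\in\X$. The natural candidate for the intermediate object is $c*z$, which lies in $\X$ by the hypothesis $c*\X\subseteq\X$. Let $\tilde\beta=\varphi^{-1}(\beta)\colon c*z\to b$ be the mate of $\beta$ under the adjunction, and consider the composite $\tilde\beta\comp(1_c*\alpha)\colon c*a\to c*z\to b$. Applying $\varphi$ and using naturality in $a$ and then the defining property $\varphi(\tilde\beta)=\beta$ yields
\[
\varphi\bigl(\tilde\beta\comp(1_c*\alpha)\bigr)=\varphi(\tilde\beta)\comp\alpha=\beta\comp\alpha=\varphi(f).
\]
Since $\varphi$ is a bijection, $f=\tilde\beta\comp(1_c*\alpha)$, which exhibits the desired factorization of $f$ through $c*z\in\X$.

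There is no real obstacle here: the statement is formally a consequence of the adjunction together with the two closure conditions on $\X$, which are exactly what is needed to force the intermediate objects $F'(c,y)$ and $c*z$ arising from the naturality squares to remain in $\X$. The only point to be careful about is orienting the naturality correctly so that the intermediate objects produced by $\varphi$ and $\varphi^{-1}$ are the ones whose membership in $\X$ the hypotheses guarantee.
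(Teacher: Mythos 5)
Your proof is correct and is exactly the expansion of the paper's one-line proof, which simply asserts that the statement follows from naturality of $\varphi$ in its arguments and the closure hypotheses on $\X$. You have correctly identified the two relevant naturality squares and used them symmetrically, with the closure assumptions $F'(c,\X)\subseteq\X$ and $c*\X\subseteq\X$ keeping the intermediate objects $F'(c,y)$ and $c*z$ inside $\X$.
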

\begin{proof}
This follows directly from the fact that $\varphi$ is natural in all
three variables and from the assumptions. 
\end{proof}
Our first main result proves that any morphism
$f\colon a\to \Sigma^i(b)$ in $\A$ factors through an object $c$ with
variety contained in $V(a)\cap V(b)$ for any integer $i$.
\begin{prop}\label{prop:trivvarfactoring}
  Assume that the functor $-*x\colon \C\to \A$ is an exact functor for
  all objects $x$ in $\A$.  Let $F'$ be a left function object for the
  action of $\C$ on $\A$, and assume that it is compatible with the
  triangulation in $\C$.  For all $h\colon \frake\to T^p(\frake)$
    assume that the following diagram commutes for all $b$ in $\A$
\[\xymatrix{
F'(T^p(\frake), b) \ar[r]^-{F'(h,b)}\ar[d]^{\simeq} & F'(\frake,b)\ar@{=}[d] \\
F'(\frake,\Sigma^{-p}(b))\ar[d]^{\simeq} & F'(\frake,b)\ar[d]^{\simeq}\\
\Sigma^{-p}(b) \ar[r]^-{\Sigma^{-p}(h\cdot 1_b)} & b
},\]
where the vertical isomorphisms are given in \textup{Proposition
  \ref{prop:leftfuncobj}}. In particular, 
\[F'(e\kos h,b)\simeq \Sigma^{-p-1}(b\kos h).\]
Let $a$ and $b$ be two objects in $\A$. Then any morphism $a\to
\Sigma^i(b)$ factors through an object with support variety
contained in $V(a)\cap V(b)$, for any integer $i$.
\end{prop}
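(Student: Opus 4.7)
The plan is to factor $f\colon a\to \Sigma^i b$ iteratively through Koszul objects built on $b$, using homogeneous generators of the annihilator $A(a,a)=\Ann_H \Hom_\A^*(a,a)$, so that the final factoring object has variety $V(a)\cap V(b)$. First, by Assumption~\ref{ass:standing}, $H$ is Noetherian and $A(a,a)$ is a graded ideal, so I can choose homogeneous generators $h_1,\dots,h_t$ of $A(a,a)$ with $h_j\in H^{p_j}$. The action of $H$ on $\Hom_\A^*(a,x)$ factors through $\varphi_a\colon H\to \End_\A^*(a)$ for any $x$ in $\A$, and $A(a,a)=\Ker\varphi_a$; therefore $A(a,a)\subseteq A(a,x)$ for every $x$, so in particular each $h_j$ annihilates $\Hom_\A^*(a,x)$ for every $x$ in $\A$.

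Next, set $b_0=b$ and define iteratively $b_j=b_{j-1}\kos h_j$ via the Koszul triangles
\[b_{j-1}\extto{h_j\cdot 1_{b_{j-1}}}\Sigma^{p_j}b_{j-1}\to b_j\extto{\beta_j}\Sigma b_{j-1}.\]
Starting from $f_0=f\colon a\to \Sigma^i b_0$, the inductive step is: given $f_{j-1}\colon a\to \Sigma^{i-(j-1)}b_{j-1}$, I use that $h_j\cdot f_{j-1}=0$ in $\Hom_\A^*(a,b_{j-1})$ (by the previous paragraph), together with the long exact sequence obtained by applying $\Hom_\A(a,-)$ to the above triangle (the connecting map out of $\Hom_\A(a,\Sigma^{i-(j-1)}b_{j-1})$ being multiplication by $h_j$, cf.\ the proof of Proposition~\ref{prop:decompannihilator}(d)), to lift $f_{j-1}$ to some $f_j\colon a\to \Sigma^{i-j}b_j$ satisfying $\Sigma^{i-j}\beta_j\circ f_j=f_{j-1}$. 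Composing the maps $\Sigma^{i-j}\beta_j$ for $j=1,\dots,t$ exhibits $f$ as factoring through $\Sigma^{i-t}b_t$.

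Finally, by iterated application of Proposition~\ref{prop:intersecvar},
\[V(\Sigma^{i-t}b_t)=V(b_t)=V(\langle h_1,\dots,h_t\rangle)\cap V(b)=V(A(a,a))\cap V(b)=V(a)\cap V(b),\]
which gives the desired bound on the support variety of the factoring object. The main obstacle is the inductive lifting step; its validity rests on keeping the annihilation statement $h_j\in A(a,b_{j-1})$ at every stage, which is handled uniformly by the inclusion $A(a,a)\subseteq A(a,x)$ rather than by any stage-by-stage bookkeeping. The function object hypothesis and the commuting diagram in the proposition play the conceptual role of identifying $F'(\frake\kos h,b)\simeq \Sigma^{-p-1}(b\kos h)$, so the Koszul factoring objects $b_j$ can be viewed intrinsically as function objects of Koszul objects in $\C$ built from the generators of $A(a,a)$, although the factorization itself is carried out entirely inside $\A$.
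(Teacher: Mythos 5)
Your proof is correct, and it takes a genuinely different route from the paper's. The paper factors $f$ through the adjunction: it notes via Proposition~\ref{prop:decompannihilator}(b) that (a shift of) $a$ is a direct summand of $(\frake\kos h_1\otimes\cdots\otimes\frake\kos h_t)*a$, transports $f$ across the adjunction $\Hom_\A(c*a,\Sigma^i b)\simeq\Hom_\A(a,F'(c,\Sigma^i b))$, uses the assumed commuting diagram to identify $F'(\frake\kos h_1\otimes\cdots\otimes\frake\kos h_t,\Sigma^i b)$ with a shift of the iterated Koszul object on $b$, and then invokes Lemma~\ref{lem:functionobjectfactoring}. You instead build the factorization entirely inside $\A$: starting from homogeneous generators of $A(a,a)=\Ker\varphi_a$ (and noting $A(a,a)\subseteq A(a,x)$ for all $x$ by graded commutativity, Proposition~\ref{prop:gradcomm}), you lift $f$ step by step along the rotated Koszul triangles $\Sigma^{-1}(b_{j-1}\kos h_j)\to b_{j-1}\xrightarrow{h_j}\Sigma^{p_j}b_{j-1}$, the obstruction at each stage being exactly $h_j\cdot f_{j-1}=0$; the terminal object $b_t=(\cdots(b\kos h_1)\cdots)\kos h_t$ has $V(b_t)=V(A(a,a))\cap V(b)=V(a)\cap V(b)$ by iterated Proposition~\ref{prop:intersecvar}. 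The interesting upshot of your route is that it does not use the function object hypothesis nor the commuting-square assumption at all; the final conclusion of the proposition holds under Assumption~\ref{ass:standing} alone. Your closing remark correctly identifies that the extra hypotheses serve only to interpret the factoring object $b_t$ via $F'$, not to establish the factorization. The paper's approach is presumably framed the way it is because the function-object identification is also what is needed for the connectedness result later in the section, so it is worth keeping in mind even though your proof shows it is not needed here.
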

\begin{proof}
  Suppose that $A(a,a)=\langle h_1,h_2,\ldots,h_t\rangle$ for some
  homogeneous elements $h_1,h_2,\ldots,h_t$ in $H$. Then by
  Proposition~\ref{prop:decompannihilator} (b) the object
  $\Sigma^t(a)$ is a direct summand of
  $(\frake\kos h_1\otimes \cdots \otimes \frake\kos h_t)*a$, recalling
  that $\frake\kos h*a' \simeq a'\kos h$ for all homogeneous elements
  $h$ in $H$ and objects $a'$ in $\A$.  If a shift of a morphism
    $f\colon a\to \Sigma^i(b)$ factors through an object with variety in
    $V(a)\cap V(b)$, then also $f$ has the same property.  Therefore
    we can assume without loss of generality that $\Sigma^t(a)$ is
    $a$.  Consequently $\Hom_\A(a,\Sigma^i(b))$ is a direct summand
  of
  $\Hom_\A((\frake\kos h_1\otimes \cdots \otimes \frake\kos h_t)*a,
  \Sigma^i(b))$ for all integers $i$. Furthermore, note that
\[\Hom_\A((\frake\kos h_1\otimes\cdots\otimes \frake\kos h_t)*a, \Sigma^i(b)) \simeq 
\Hom_\A(a, F'(\frake\kos h_1\otimes \cdots\otimes \frake\kos h_t,
\Sigma^i(b))).\]
Hence by Lemma~\ref{lem:functionobjectfactoring}, we shall see that it
is sufficient to show that 
\[V(F'(\frake\kos h_1\otimes\cdots\otimes \frake\kos h_t,\Sigma^i(b)))
  \subseteq V(a)\cap V(b).\]  
Applying the last assumption multiple times we obtain that 
\[F'(\frake\kos h_1\otimes \cdots \otimes e\kos h_t, b) \simeq 
\Sigma^{-p_t-1}(\cdots (\Sigma^{-p_1-1}(b\kos h_1)\kos h_2)\cdots
)\kos h_t.\]
Then we infer that 
\[V(F'(\frake\kos h_1\otimes \cdots \otimes e\kos h_t, b)) = V(\langle
  h_1,\ldots,h_t\rangle) \cap V(b) = V(a)\cap V(b).\]
Let $\X=\{x\in \A\mid V(x)\subseteq V(a)\cap V(b)\}$.  Then with $c =
\frake\kos h_1\otimes \cdots \otimes \frake\kos h_t$, the
assumptions of Lemma~\ref{lem:functionobjectfactoring} are satisfied.
Hence we conclude that any morphism $f\colon a \to \Sigma^i(b)$ factors
through an object in $\X$.  This completes the proof.
\end{proof}

For the last result of this section we show that the support variety
of an indecomposable object in $\A$ is connected under additional
assumptions. The proof is similar to the one of \cite[Theorem
3.1]{Bergh}.
\begin{prop} 
We assume the following conditions:
\begin{enumerate}[\rm(i)]
\item The functor $-*a\colon \C\to \A$ is an exact functor for
  all objects $a$ in $\A$.
\item There exists a left function object $F'$ for the action of
  $\C$ on $\A$ compatible with the triangulation in $\C$. 
\item $\A$ is a Krull-Schmidt category and the idempotents split
  in $\A/\A^\perf$.
\item The degree zero part $H^0$ of $H$ is a local artinian ring.
\item For all $h\colon \frake\to T^p(\frake)$ the
  following diagram commutes for all $b$ in $\A$
\[\xymatrix{
F'(T^p(\frake), b) \ar[r]^-{F'(h,b)}\ar[d]^{\simeq} & F'(\frake,b)\ar@{=}[d] \\
F'(\frake,\Sigma^{-p}(b))\ar[d]^{\simeq} & F'(\frake,b)\ar[d]^{\simeq}\\
\Sigma^{-p}(b) \ar[r]^{\Sigma^{-p}(h\cdot 1_b)} & b
},\]
where the vertical isomorphisms are given in \textup{Proposition
  \ref{prop:leftfuncobj}}.
\end{enumerate}
Let $a$ be a non-perfect object in $\A$, such that $V(a)=V_1\cup V_2$
for some homogeneous varieties $V_1$ and $V_2$ with $V_1\cap
V_2\subseteq V(H^+)$.  Then, in the Verdier quotient $\A/\A^\perf$ the
object $a$ decomposes as $a_1\amalg a_2$ where $V(a_i)=V_i$ for
$i=1,2$.
\end{prop}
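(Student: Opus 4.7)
My plan is to construct objects $a_1, a_2 \in \A$ with $V(a_i) = V_i$ by iterated Koszul construction, use the function-object hypothesis to prove that $a_1$ and $a_2$ are orthogonal in $\A/\A^\perf$, and then exhibit $a$ as their direct sum in the quotient. \textbf{Step 1 (Realization).} Since $H$ is Noetherian and each $V_i$ is a closed homogeneous subvariety of $V(a)$, I can choose finitely many homogeneous elements $h_1, \ldots, h_s$ and $g_1, \ldots, g_t$ of $H^+$ such that $V(\langle h_1, \ldots, h_s\rangle) \cap V(a) = V_1$ and $V(\langle g_1, \ldots, g_t\rangle) \cap V(a) = V_2$. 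Set $a_1 := a \kos h_1 \kos \cdots \kos h_s$ and $a_2 := a \kos g_1 \kos \cdots \kos g_t$. Iterated application of Proposition~\ref{prop:intersecvar} gives $V(a_i) = V_i$, and by construction $a_i \in \Thick(a)$.

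\textbf{Step 2 (Orthogonality in $\A/\A^\perf$).} By Proposition~\ref{prop:trivvarfactoring}, every morphism $a_i \to \Sigma^n a_j$ with $\{i,j\} = \{1,2\}$ and $n \in \Z$ factors through an object whose variety lies in $V(a_1) \cap V(a_2) = V_1 \cap V_2 \subseteq V(H^+)$, hence through a perfect object. Therefore $\Hom_{\A/\A^\perf}(a_1, \Sigma^n a_2) = 0 = \Hom_{\A/\A^\perf}(a_2, \Sigma^n a_1)$ for every $n \in \Z$.

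\textbf{Step 3 (Assembly).} A standard fact about triangulated categories $\D$ in which idempotents split is the following: if $\X, \Y \subseteq \D$ are thick subcategories with $\Hom^*_\D(\X, \Y) = 0 = \Hom^*_\D(\Y, \X)$, then every object of $\Thick_\D(\X \cup \Y)$ splits as $X \amalg Y$ with $X \in \X$ and $Y \in \Y$. Applied in $\A/\A^\perf$ (where idempotents split by assumption (iii)) with $\X = \Thick(a_1)$ and $\Y = \Thick(a_2)$, the task reduces to showing that the image $\bar a$ of $a$ in $\A/\A^\perf$ belongs to $\Thick(a_1, a_2)$. I would establish this by an induction on the number of Koszul steps, using the compatibility of $*$ with the triangulation (assumption (i)/(v)) to propagate the Koszul triangles
\[
a \longrightarrow \Sigma^{p} a \longrightarrow a \kos h \longrightarrow \Sigma a
\]
and exploiting that the ``leftover'' at each stage has strictly smaller support, eventually landing in $V_1 \cap V_2 \subseteq V(H^+)$ and hence becoming zero in the quotient. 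The resulting decomposition $\bar a \simeq b_1 \amalg b_2$ with $b_i \in \Thick(a_i)$ forces $V(b_i) \subseteq V_i$, and since $V(b_1) \cup V(b_2) = V(a) = V_1 \cup V_2$ with $V_1 \cap V_2 \subseteq V(H^+)$, we conclude $V(b_i) = V_i$.

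\textbf{Main obstacle.} The crux is the Mayer--Vietoris-type inductive argument in Step 3 showing $\bar a \in \Thick(a_1, a_2)$ inside $\A/\A^\perf$. This is precisely where the hypothesis $V_1 \cap V_2 \subseteq V(H^+)$ is indispensable, since it guarantees that iterated Koszul reductions shrink the support down to the perfect locus rather than producing genuine new pieces. The local artinian hypothesis on $H^0$ combined with Krull--Schmidt is used only at the end, to pin down the decomposition so obtained and to identify the components with the prescribed support stratification.
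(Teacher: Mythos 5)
Your Steps 1 and 2 are sound, and the orthogonality mechanism in Step 2 (every map $a_1 \to \Sigma^n a_2$ factors through an object with variety in $V_1\cap V_2\subseteq V(H^+)$, hence through $\A^\perf$) is exactly the ingredient the paper uses, via Proposition~\ref{prop:trivvarfactoring}, at the end of its argument. The idempotent-splitting observation in Step 3 (orthogonal thick subcategories in an idempotent-complete triangulated category force objects of their joint thick closure to split) is also correct.

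The gap is the claim, left unproved, that $\bar a \in \Thick(a_1,a_2)$ in $\A/\A^\perf$. Koszul objects travel \emph{down} the thick-subcategory lattice, not up: the triangle $a \to \Sigma^p a \to a\kos h \to \Sigma a$ places $a\kos h$ in $\Thick(a)$, but gives no handle on producing $a$ from $a\kos h$. Your $a_1 = a\kos h_1\kos\cdots\kos h_s$ and $a_2 = a\kos g_1\kos\cdots\kos g_t$ both lie in $\Thick(a)$ with the correct varieties, yet there is no reason $a$ should lie in $\Thick(a_1,a_2)$; indeed in typical examples the Koszul reduction loses information irrecoverably. The only situation in which Koszuling lets you recover $a$ is when the element annihilates, i.e.\ Proposition~\ref{prop:decompannihilator}(a): if $h\in A(a,a)$ then $a\kos h \simeq \Sigma(a)\amalg\Sigma^p(a)$. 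Your $h_i$ and $g_j$ are not in $A(a,a)$, so this does not apply, and the ``induction on Koszul steps'' you outline does not produce the required containment.

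The paper routes around precisely this difficulty. It picks single elements $\eta_1\in\mathfrak{a}_1$ and $\eta_2\in\mathfrak{a}_2$ whose \emph{product} $\eta_1\eta_2$ lies in $A(a,a)$ (possible because $V(\langle\eta_1\eta_2\rangle)\supseteq V(a)$, so a high power annihilates). Then $\frake\kos\eta_1\eta_2 * a \simeq \Sigma(a)\amalg\Sigma^{m+n}(a)$ does recover $a$ up to shift and direct sum. The octahedral axiom applied to the factorization $\eta_1\eta_2$ in $\C$ yields a triangle $\frake\kos\eta_2 \to \frake\kos\eta_1\eta_2 \to T^n(\frake\kos\eta_1) \to$, which after applying $-*a$ sandwiches $\Sigma(a)\amalg\Sigma^{m+n}(a)$ between $a\kos\eta_2$ and a shift of $a\kos\eta_1$. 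These two outer objects have strictly smaller support, so one can induct on the invariant $\gamma(H/\mathfrak{a}_1)+\gamma(H/\mathfrak{a}_2)$ (polynomial growth), with a base case when one $\gamma$ vanishes (handled using the existence of nonzero perfect objects via Theorem~\ref{thm:realization}). The decomposition of the outer terms then transfers across the triangle, using your orthogonality observation to diagonalize the connecting map, and finally one extracts orthogonal idempotents on $\Sigma(a)$. Without this annihilation-plus-octahedron device, or some substitute for it, Step 3 of your argument cannot be completed.
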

\begin{proof}
When $H^0$ is a local artinian ring, all perfect objects have the same
variety given by the ideal $\frakm_\gr=\langle \rad H^0, H^{\geqslant
  1}\rangle$, which is contained in the variety of any object in $\A$. 

Assume that $V(a)=V_1\cup V_2$ for some homogeneous varieties
$V_1=V(\mathfrak{a}_1)$ and $V_2=V(\mathfrak{a}_2)$ with
$V_1\cap V_2\subseteq V(H^+)$ and $\mathfrak{a}_i$ homogeneous ideals
in $H$ for $i=1,2$.  Denote by $\gamma(U)$ the polynomial growth of
the minimal number of generators of the graded pieces of a positively
graded $H$-module $U$ as a $H^0$-module.  The proof goes by induction
on $\gamma(H/\mathfrak{a}_1) + \gamma(H/\mathfrak{a}_2)$. Note that we
have $\cx(b)=\gamma(H/A(b,b))$ for any object $b$ in $\A$.
Furthermore, we have $\gamma(U/\rad(H^0) U) = \gamma(U)$, when
$U/\rad(H^0)U$ is considered as a graded module over $H/\rad(H^0)
H$. Hence we can think of $H$ as a graded algebra over the field
$H^0/\rad(H^0)$.

Assume first that one of $\gamma(H/\mathfrak{a}_i)$ is zero, say for
$i=1$.  Since $a$ is a non-perfect object, observe that
  $V(a) \neq \emptyset$.  Then by Theorem \ref{thm:realization} there
  exists a nonzero perfect object $p\in \A$. Then $p\amalg a \simeq a$
  in $\A/\A^\perf$.  As $V_1$ is the variety of a perfect object, we
  have $V(p) = V_1$.  Since $V(a) = V_1\cup V_2 = V_2$, we have that
  $V(a) = V_2$.  Hence we can choose $a_1=p$ and $a_2=a$.

Assume that $\gamma(H/\mathfrak{a}_i) > 0$ for $i=1,2$.  Since
$V_1\cap V_2\subseteq V(H^+)$, we infer that
$\gamma(H/\langle \mathfrak{a}_1, \mathfrak{a}_2\rangle) = 0$. Then we
can choose homogeneous elements $\eta_1\in \mathfrak{a}_1$ and
$\eta_2\in\mathfrak{a}_2$ of degrees $m$ and $n$, respectively, such
that
\[\gamma(H/\langle\mathfrak{a}_2,\eta_1\rangle)=\gamma(H/\mathfrak{a}_2) - 1\] 
and
\[\gamma(H/\langle\mathfrak{a}_1,\eta_2\rangle)=\gamma(H/\mathfrak{a}_1) - 1.\] 
We have that $V_i\subseteq V(\langle \eta_i\rangle)$ for $i=1,2$ and
$V(\langle \eta_1\eta_2\rangle)= V(\langle \eta_1\rangle)\cup V(\langle\eta_2\rangle)$,
which contains $V_1\cup V_2=V(a)$.  This implies that $\eta_1\eta_2$
is in $\sqrt{\Ann_H(\Hom_\A^*(a,a))}$. Choosing high enough powers of
$\eta_1$ and $\eta_2$, we can without loss of generality assume that
$\eta_1\eta_2$ is in $\Ann_H(\Hom_\A^*(a,a))$. By Proposition
\ref{prop:decompannihilator} (a) we have that
$\frake\kos\eta_1\eta_2*a\simeq \Sigma(a)\amalg \Sigma^{m+n}(a)$.

By the octahedral axiom there is a triangle $\theta$
\[\frake\kos \eta_2 \to \frake\kos \eta_1\eta_2\to
T^n(\frake\kos\eta_1)\to T(\frake\kos\eta_2)\]
in $\C$.  Then
\begin{align}
    V(\frake\kos\eta_2*a) & = V(\langle \eta_2\rangle)\cap V(a)\notag\\
    & = (V_1\cap V(\langle \eta_2\rangle))\cup V_2.\notag
\end{align}
By induction $\frake\kos\eta_2*a\simeq a_1\amalg a_2$ where
$V(a_1)=V_1\cap V(\langle \eta_2\rangle)$ and $V(a_2)=V_2$.  Similarly
we have
$V(T^n(\frake\kos\eta_1)*a)=V_1\cup (V_2\cap V(\langle
\eta_1\rangle))$,
so that $T^n(\frake\kos\eta_1)*a\simeq a_1'\amalg a_2'$ with
$V(a_1')=V_1$ and $V(a_2')=V_2\cap V(\langle \eta_2\rangle)$ .  Note
that both $V(a_1)\cap V(a_2')$ and $V(a_2)\cap V(a_1')$ are contained
in $V(H^+)$.  Then the triangle $\theta*a$ in $\A$ has the form
\[a_1\amalg a_2 \to \Sigma(a)\amalg \Sigma^{m+n}(a) \to a_1'\amalg
a_2' \extto{\psi} \Sigma(a_1)\amalg \Sigma(a_2).\] By the above
observations and Proposition~\ref{prop:trivvarfactoring} we have that
$\psi=\left(\begin{smallmatrix} u_1 & 0\\ 0 &
    u_2\end{smallmatrix}\right)$ in $\A/\A^\perf$.  The image of this
triangle in $\A/\A^\perf$ is again a triangle.  By the uniqueness
of the cone, it follows that we have two triangles in $\A/\A^\perf$  
\[a_i \to b_i \to a_i'\to \Sigma(a_i)\]
for $i=1,2$.  Hence we have an isomorphism $\varphi\colon
\Sigma(a)\amalg \Sigma^{m+n}(a)\to 
b_1\amalg b_2$ in $\A/\A^\perf$.  Using that these triangles can be
lifted back to $\A$, it is easy to see that $V(b_i) \subseteq V_i$ for
$i=1,2$.  Since $V(a) = V_1\cup V_2 = V(b_1\amalg
b_2) = V(b_1)\cup V(b_2)$, we infer that $V(b_i)=V_i$ for $i=1,2$.

Consider the natural compositions $f_1$ given by 
\begin{multline}
\Sigma(a)\to \Sigma(a)\amalg \Sigma^{m+n}(a) \extto{\varphi} b_1\amalg b_2
\extto{\left(\begin{smallmatrix} 1 & 0\\ 0 &
      0\end{smallmatrix}\right)} b_1\amalg b_2\notag\\ 
\extto{\varphi^{-1}} \Sigma(a)\amalg \Sigma^{m+n}(a) \to
\Sigma(a)
\end{multline} 
and $f_2$ given by 
\begin{multline}
  \Sigma(a)\to \Sigma(a)\amalg \Sigma^{m+n}(a) \extto{\varphi} b_1\amalg b_2
  \extto{\left(\begin{smallmatrix} 0 & 0\\ 0 &
        1\end{smallmatrix}\right)} b_1\amalg b_2\notag\\
  \extto{\varphi^{-1}} \Sigma(a)\amalg
  \Sigma^{m+n}(a) \to \Sigma(a).
\end{multline}
Using Proposition~\ref{prop:trivvarfactoring} we infer that
$f_2f_1=f_1f_2=0$ in $\A/\A^\perf$, so that $f_1$ and $f_2$ are
orthogonal idempotents and their sum is the identity on
$\Sigma(a)$. The claim follows from this.
\end{proof}

\section{Complete intersections}\label{section:7}

This section is devoted to reviewing our theory in the setting of
complete intersections.

Let $A = k\llbracket x_1,x_2,\ldots,x_n\rrbracket$ be the ring of formal power
series in $n$ indeterminants $\{x_1,x_2,\ldots, x_n\}$ over a field
$k$.  Let $(R,\frakm)$ be a complete intersection, where $R =
A/(a_1,\ldots,a_t)$ for a regular sequence $\{a_1,\ldots,a_t\}$ in the
square of the maximal ideal of $A$.  In \cite{Av,AvBu} support
varieties of finitely generated modules over $R$ (and more general
complete intersections) were defined in terms of $\Spec
R/\frakm[\chi_1,\ldots,\chi_t]$, where $\{\chi_1,\ldots,\chi_t\}$ is a
set of cohomological operators on $\Ext^*_R(M,M)$ of degree two for
any finitely generated $R$-module $M$. In our situation, the ring
$R[\chi_1,\ldots,\chi_t]$ can be viewed as a graded subring of the
Hochschild cohomology ring $\HH^*(R)$ (see \cite{SS}).

The derived category $\C'=\bfD^{b}(\mod R\otimes_k R)$ is a
triangulated tensor category via the derived tensor product
$-\otimes_R^{\mathbb{L}}-\colon \C'\times \C'\to \C'$, and it acts on
the derived category $\A=D^{b}(\mod R)$ via the derived tensor
product $-\otimes_R^{\mathbb{L}}-\colon \C'\times\A\to \A$. The stalk
complex $R$, with $R$ concentrated in degree zero, is the tensor
identity.  Then one can show that we have the following commutative
diagram
\[\xymatrix@C=40pt{
\C'\times \A \ar[r]^{-\otimes^{\mathbb{L}}_R-} & \A\\
\C\times \A \ar@{^(->}[u]\ar[r]^{-\otimes_R-} & \A
\ar@{^(->}[u]}
\]
where $\C=\Thick(R)$ inside $\C'$ and $-\otimes_R-$ represents the
total tensor product. Then $\C$ is a triangulated tensor category with
an action on $\A$. 

Consider $S=\End_\C^*(R)=\HH^*(R)$, which by our general theory is a
graded-commutative ring, where we note that $S^0=R$. The ring
$H=R[\chi_1,\ldots,\chi_t]$ can be viewed as a graded subring of $S$,
and the action on $\Ext^*_R(M,N)$ for two finitely generated
$R$-modules $M$ and $N$ factor through the inclusion into $S$. Since
$\Ext^*_R(M,N)$ is a finitely generated module over $H$ for all
finitely generated $R$-modules $M$ and $N$ (see \cite{G}), our
Assumptions~\ref{ass:standing} and \ref{ass:function} are satisfied
with left function object $F'=\mathbb{R}\Hom_R(-,-)$.  Hence we can
apply all the results obtained in the previous sections. In addition
we point out the following.

\begin{thm}\label{thm:ci}
  Let $(R,\frakm)$ be a complete intersection, where 
  \[R=k\llbracket x_1,x_2,\ldots,x_n\rrbracket/(a_1,a_2,\ldots,a_t)\] 
  for a field $k$ and for a regular sequence $\{a_1,a_2,\ldots,a_t\}$ in
  the square of the maximal ideal of the ring of formal power series
  $k\llbracket x_1,x_2,\ldots,x_n\rrbracket$ in $n$ indeterminants
  $\{x_1,x_2,\ldots, x_n\}$.
\begin{enumerate}[\rm(a)]
\item The perfect objects in $\bfD^{b}(\mod R)$ are the perfect 
  complexes. 
\item\sloppy Let $\frakb=(y_1,y_2,\ldots,y_t)$ be an ideal in $R$
  generated by elements in $\frakm$. Let $K(y_1,y_2,\ldots,y_t)$ be
  the Koszul complex on the set of the generators
  $\{y_1,y_2,\ldots,y_t\}$ of $\frakb$. 

  Then $K(y_1,y_2,\ldots,y_t)$ is a perfect complex and
  \[V(K(y_1,y_2,\ldots,y_t)) = V(\langle \frakb,H^{\geqslant 1} \rangle).\]
\item Let $X$ be a perfect complex, then 
\[\quad\quad\quad V(X)=\{\langle\frakp,H^+\rangle\in \Spec H\mid \frakp\in\Spec R
\text{\ with $X_\frakp\neq 0$ in $\bfD^{b}(\mod R_\frakp)$}\}.\]
\item View $M$ in $\mod R$ as a stalk complex
  concentrated in degree $0$, then 
\[V^*(M)=V(M)\cap V(\frakm H),\] 
where $V^*(M)$ is the variety of $M$ defined in \cite{AvBu}. 
\item For any $X$ in $\bfD^{b}(\mod R)$ we have that 
\[V(X) \subseteq \cup_{\frakp\in \Spec R} V(R/\frakp).\]
\end{enumerate}
\end{thm}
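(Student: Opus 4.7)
The plan is to verify the five parts by unpacking the axiomatic definitions from Sections~\ref{section:3}--\ref{section:4} in the concrete setting where $H=R[\chi_1,\ldots,\chi_t]$ with $H^0=R$ and $H^{\geqslant 1}=\langle\chi_1,\ldots,\chi_t\rangle$. Two basic observations feed into everything. First, since each $\chi_i$ raises degree and $\Hom_\A^*(R,R)=R$ lives only in degree zero, each $\chi_i$ acts as zero on $\Hom_\A^*(R,R)$, so $V(R)=V(H^{\geqslant 1})$; moreover, every prime of $H$ contains the nilradical $\sqrt{0_R}$, so $V(H^+)=V(H^{\geqslant 1})$. Second, for a degree-zero element $y\in R$, the Koszul object $R\kos y$ constructed in Section~\ref{section:3} is the mapping cone of multiplication by $y$ on $R$ in $\bfD^{b}(\mod R)$, which coincides up to shift with the classical Koszul complex $K(y)$.

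For (a) I would invoke Proposition~\ref{prop:trivvar} to see that $X$ is perfect in our sense iff $\Ext_R^i(X,Y)=0$ for every $Y\in\mod R$ and every $|i|\gg 0$; since $X$ is a bounded complex with finitely generated cohomology over the Noetherian local ring $R$, this is the classical criterion for $X$ to be quasi-isomorphic to a bounded complex of finitely generated projectives, i.e.\ a perfect complex. The converse is immediate from vanishing of higher $\Ext$ for perfect complexes. For (b), $K(y_1,\ldots,y_t)$ is a bounded complex of free $R$-modules and is therefore perfect. Using the compatibility isomorphism $(\frake\kos h)*a\simeq a\kos h$ iteratively with $a=R$, together with associativity of $\otimes_R^{\mathbb{L}}$, the iterated Koszul object $(\cdots(R\kos y_1)\cdots)\kos y_t$ agrees up to shift with $K(y_1,\ldots,y_t)$. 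Iterating Proposition~\ref{prop:intersecvar} then yields $V(K(y_1,\ldots,y_t))=V(\langle y_1\rangle)\cap\cdots\cap V(\langle y_t\rangle)\cap V(R)=V(\langle\frakb,H^{\geqslant 1}\rangle)$.

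For (c), perfectness places $V(X)\subseteq V(H^+)=V(H^{\geqslant 1})$, so every prime $\frakP\in V(X)$ has the form $\langle\frakp,H^{\geqslant 1}\rangle$ for some $\frakp\in\Spec R$. Because $R$ is local, $H\setminus\frakP$ and $R\setminus\frakp$ intersected with $R$ coincide, and the localization satisfies $\Hom_\A^*(X,X)_\frakP=\Ext_R^*(X,X)\otimes_R R_\frakp$; this is nonzero iff $X_\frakp\not\cong 0$ in $\bfD^{b}(\mod R_\frakp)$, the ``only if'' direction being clear and the ``if'' using that $X_\frakp$ remains perfect so that $\End_{R_\frakp}(X_\frakp)\neq 0$ already detects nonvanishing. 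For (d), $V(\frakm H)$ corresponds bijectively to $\Spec(H/\frakm H)=\Spec k[\chi_1,\ldots,\chi_t]$, and an application of Nakayama to the finitely generated $H$-module $\Ext_R^*(M,M)$ shows that $V(M)\cap V(\frakm H)$ equals the support over $k[\chi]$ of $\Ext_R^*(M,M)/\frakm\Ext_R^*(M,M)$; identifying this with the Avramov--Buchweitz $V^*(M)=\Supp_{k[\chi]}\Ext_R^*(M,k)$ is the standard comparison between the supports of these two finitely generated $k[\chi]$-modules.

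For (e), I would reduce from complexes to modules via the truncation triangles: any $X\in\bfD^{b}(\mod R)$ lies in the thick subcategory generated by its finitely many nonzero cohomology modules, so Proposition~\ref{prop:elempropvariety} gives $V(X)\subseteq\bigcup_i V(H^i(X))$. For each finitely generated $R$-module $M$, a prime filtration $0=M_0\subset M_1\subset\cdots\subset M_n=M$ with $M_j/M_{j-1}\cong R/\frakp_j$ combined with iterated application of the triangle inequality yields $V(M)\subseteq\bigcup_j V(R/\frakp_j)$. The principal obstacle will be part (d): the intrinsic variety $V(M)\cap V(\frakm H)$ is computed from $\Ext_R^*(M,M)$ while the classical $V^*(M)$ is computed from $\Ext_R^*(M,k)$, and bridging the two requires the nontrivial but standard fact that these finitely generated $k[\chi]$-modules have the same support. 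The remaining parts are essentially direct applications of the framework of Sections~\ref{section:3}--\ref{section:4} together with classical commutative algebra.
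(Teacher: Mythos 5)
Your proposal is correct and follows essentially the same route as the paper's proof: part (a) is the same argument (the paper spells out the syzygy-is-projective step that you defer to the ``classical criterion''), parts (b), (c), (e) coincide, and for (d) the paper also passes through $\Supp_{H/\frakm H}(\Ext^*_R(M,M)\otimes_R R/\frakm)$, silently invoking the same identification with the Avramov--Buchweitz variety that you explicitly flag as the one nontrivial comparison.
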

\begin{proof}
(a) The support variety of the stalk complex of $R$ is $V(H^+)$, so
that all perfect complexes over $R$ are perfect objects in $\bfD^{b}(\mod R)$. 

Conversely, let $X$ be a perfect object in $\bfD^{b}(\mod R)$. There exists a
complex of projective $R$-modules $p(X)$ and a quasi-isomorphism
$p(X)\to X$ such that $p(X)^i=(0)$ for $i\gg 0$. Also, there is an
integer $n$ such that $p(X)$ is exact to the left of $p(X)^{n-1}$. By
assumption
\[(0)=\Hom_{\bfD^{b}(\mod R)}(X,R/\frakm[i])\simeq
\Hom_{\bfK^{-,b}(\mod R)}(p(X),R/\frakm[i])\]  
for $|i|\geqslant m$. Let $N=\max\{m,n\}$. Then
$\Hom_{\bfK(R)}(p(X),R/\frakm[N+2])$ equals $\Ext^1_R(\Im
f^{N+1},R/\frakm)$. Hence $\Im f^{N+1}$ is projective, and by soft
truncation $p(X)$ is a perfect complex. 

(b) Let $\frakb=(y_1,y_2,\ldots,y_t)$ be an ideal in $R$ generated by
elements in $\frakm$. Consider the triangles $R\extto{y_i\cdot-} R\to
K_{y_i}\to R[1]$ for $i=1,2,\ldots,t$, where all $\{K_{y_i}\}$ are
perfect complexes. Then the Koszul complex $K(y_1,y_2,\ldots,y_t)$ is
given by $K_{y_1}\otimes_R \cdots\otimes_R K_{y_t}\otimes_R R$, which
clearly is a perfect complex. By Proposition~\ref{prop:intersecvar} we
infer that $V(K(y_1,\ldots,y_t))=V(\langle
y_1,y_2,\ldots,y_t\rangle)\cap V(R)= V(\langle\frakb, H^{\geqslant 1}\rangle)$.

(c) Let $X$ be a perfect complex. Then $\sqrt{\Ann_H(\Hom^*_{\bfD^{b}(\mod
    R)}(X,X))}$ contains $H^{\geqslant 1}$. Hence we only need to find
the variety of the ideal 
\[\fraka = \sqrt{\Ann_R(\Hom^*_{\bfD^{b}(\mod
    R)}(X,X))}=\sqrt{\cap_{i\in\mathbb{Z}}\Ann_R(\Hom_{\bfD^{b}(\mod R)}(X,X[i]))}\] 
in $\Spec R$. Since $\Ann_R(\Hom_{\bfD^{b}(\mod R)}(X,X)) \subseteq
\Ann_R(\Hom_{\bfD^{b}(\mod R)}(X,X[i]))$ for all integers $i$, we infer that 
\[\fraka = \sqrt{\Ann_R(\Hom_{\bfD^{b}(\mod R)}(X,X))}.\]
Then $\fraka\subseteq \frakp$ with $\frakp$ in $\Spec R$ if and only
if 
\[(0)\neq\Hom_{\bfD^{b}(\mod R)}(X,X)_{\frakp}\simeq \Hom_{\bfD^{b}(\mod
  R_{\frakp})}(X_{\frakp},X_{\frakp}).\] 
This is equivalent to that $X_{\frakp}\neq (0)$ in $\bfD^{b}(\mod
R_{\frakp})$. Hence the claim follows. 

(d) Let $M$ be in $\mod R$. Recall that 
\[V^*(M)=\Supp_{H/\frakm H}(\Ext^*_R(M,M)\otimes_R R/\frakm).\] 
Let $\frakp$ be in $V^*(M)$ and $\frakq=\varphi^*(\frakp)$. Then
$\frakp$ is in $V^*(M)$ if and only if
\begin{align}
(0) & \neq (\Ext^*_R(M,M)/\frakm \Ext^*_R(M,M))_{\frakp} \notag\\
    & \simeq \Ext^*_R(M,M)_{\frakq}/\frakm
    \Ext^*_R(M,M)_{\frakq}.\notag
\end{align}
Hence, $\frakp$ is in $V^*(M)$ if and only if
$(0)\neq\Ext^*_R(M,M)_{\frakq}\simeq \Hom_{\bfD^{b}(\mod R)}^*(M,M)_{\frakq}$ or
equivalently $\frakq$ is in $V(M)$. Since $\frakm H\subseteq
\varphi^*(\frakp)$ for all $\frakp$ in $\Spec (H/\frakm H)$, the claim
follows.

(e) Let $X$ be in $\bfD^{b}(\mod R)$. Then $X$ can be filtered in a finite
set of finitely generated $R$-modules $\{M_i\}_{i=1}^m$, and each such
$R$-module $M_i$ can be filtered in a finite set of $\{
R/\frakp_{i_j}\}_{j=1}^{m_i}$ with $\frakp_{i_j}$ in $\Spec R$. The
claim follows from this and Proposition~\ref{prop:elempropvariety}. 
\end{proof}
We can reformulate (c) and (d) of the previous result as follows. 
\begin{prop}
  Let $(R,\frakm)$ be a complete intersection, where 
  \[R=k\llbracket x_1,x_2,\ldots,x_n\rrbracket/(a_1,a_2,\ldots,a_t)\] 
  for a field $k$ and for a regular sequence $\{a_1,a_2,\ldots,a_t\}$ in
  the square of the maximal ideal of the ring of formal power series
  $k\llbracket x_1,x_2,\ldots,x_n\rrbracket$ in $n$ indeterminants
  $\{x_1,x_2,\ldots, x_n\}$. 

  Let $\varphi\colon H\to H/\frakm H$ and $\psi\colon H\to R$ be the
  natural ring homomorphisms, and consider the maps $\varphi^*\colon
  \Spec (H/\frakm)\to \Spec H$ and $\psi^*\colon \Spec R\to \Spec
  H$. 
\begin{enumerate}[\rm(a)]
\item If $X$ is a perfect complex in $\bfD^{b}(\mod R)$, then 
  \[V(X)=\psi^*(V_T(X)),\] 
  where $V_T(X)$ denotes the support of the perfect complex $X$ in the
  sense of \cite{T,N}.
\item If $M$ in $\mod R$ is viewed as a stalk complex
  concentrated in degree $0$, then 
\[\varphi^*(V(M)\cap V(\frakm H)) = V^*(M),\] 
where $V^*(M)$ is the variety of $M$ defined in
  \cite{AvBu}. 
\end{enumerate}
\end{prop}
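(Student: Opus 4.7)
Both statements are direct reformulations of parts (c) and (d) of Theorem~\ref{thm:ci}, with the relevant identifications of prime spectra made explicit through the spectra-level maps $\psi^*$ and $\varphi^*$.

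For (a), the Thomason--Neeman support of a perfect complex $X$ is $V_T(X)=\{\frakp\in\Spec R\mid X_\frakp\neq 0\text{ in }\bfD^{b}(\mod R_\frakp)\}$, so Theorem~\ref{thm:ci}(c) can be rewritten as $V(X)=\{\langle\frakp,H^+\rangle\mid \frakp\in V_T(X)\}$. It remains to identify $\langle\frakp,H^+\rangle$ with $\psi^*(\frakp)=\psi^{-1}(\frakp)$. Writing $H$ as the graded $R$-algebra generated by the cohomological operators $\chi_1,\ldots,\chi_t$ of degree two, and $\psi\colon H\to R$ as the projection killing each $\chi_i$, one has $\psi^{-1}(\frakp)=\langle\frakp,\chi_1,\ldots,\chi_t\rangle$. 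On the other hand $H^+=\langle\sqrt{0_R},\chi_1,\ldots,\chi_t\rangle$, and since the nilradical $\sqrt{0_R}$ sits inside every prime of $R$, the two ideals coincide; this yields (a).

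For (b), Theorem~\ref{thm:ci}(d) asserts $V^*(M)=V(M)\cap V(\frakm H)$, where the identification is through the natural bijection between $\Spec(H/\frakm H)$ and the closed subset $V(\frakm H)\subseteq\Spec H$ induced by the surjection $\varphi\colon H\to H/\frakm H$. At the level of spectra, $\varphi^*\colon\Spec(H/\frakm H)\hookrightarrow\Spec H$ is the injection whose image is exactly $V(\frakm H)$, and under this bijection $V^*(M)\subseteq\Spec(H/\frakm H)$ corresponds to $V(M)\cap V(\frakm H)\subseteq\Spec H$. Reading Theorem~\ref{thm:ci}(d) through this correspondence gives precisely the statement in (b).

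The only genuinely non-formal step in either part is the observation that $\sqrt{0_R}\subseteq\frakp$ used in (a); everything else is bookkeeping on top of Theorem~\ref{thm:ci} and the standard behaviour of contraction and quotient maps on spectra, so I do not expect any substantial obstacle.
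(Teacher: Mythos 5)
Your proof is correct and takes the same route as the paper, which offers no explicit argument beyond the observation that the proposition is a reformulation of Theorem~\ref{thm:ci}(c) and (d). You simply supply the bookkeeping the paper leaves implicit: identifying $\psi^{-1}(\frakp)=\langle\frakp,H^{\geqslant 1}\rangle$ with $\langle\frakp,H^+\rangle$ using $\sqrt{0_R}\subseteq\frakp$, and identifying $\Spec(H/\frakm H)$ with the closed subset $V(\frakm H)\subseteq\Spec H$ via $\varphi^*$.

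One small remark on (b): as written in the paper, $\varphi^*\colon\Spec(H/\frakm H)\to\Spec H$ is the contraction map, whose image is $V(\frakm H)$, so the literal expression $\varphi^*(V(M)\cap V(\frakm H))$ should really be read as applying the inverse of this homeomorphism onto $V(\frakm H)$ (equivalently, $\varphi^*(V^*(M))=V(M)\cap V(\frakm H)$). You implicitly handle this by treating $\varphi^*$ as a bijection onto $V(\frakm H)$ and ``reading the statement through the correspondence,'' which is exactly what is intended in the paper's proof of Theorem~\ref{thm:ci}(d) (where $\frakq=\varphi^*(\frakp)$), so there is no gap.
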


\section{Group rings over commutative Noetherian
 local rings}\label{section:8} 
Here we apply our results to group rings of finite groups over
commutative Noetherian local rings.

For a finite group $G$ and a commutative Noetherian local ring $R$ we
can form the group ring $RG$. For a finitely generated $RG$-module $M$
it is known that $H^*(G,M)=\Ext^*_{RG}(R,M)$ is Noetherian as a module
over $H=H^*(G,R)=\Ext^*_{RG}(R,R)$ \cite{Ev,Go,V}. Let
$\C=(\bfD^{b}(\Proj RG), -\otimes_R-,\frake = R)$ act on
$\A=\bfD^{b}(\mod RG)$. Then our Assumptions~\ref{ass:standing} and
\ref{ass:function} are satisfied, as $-\otimes_R M\colon \C\to \A$ is
an exact functor for all modules/stalk complexes in $\A$ and
$F'=\Hom_R(-,-)\colon \C\times \A\to \A$ is a left function object for
the action which is compatible with the triangulation in $\C$. Hence
we can apply all of our results from the previous sections. In
addition we point out the following.

\begin{thm}
  Let $G$ be a finite group and let $R$ be a commutative Noetherian
  local ring.
\begin{enumerate}[\rm(a)]
\item The perfect objects in $\bfD^{b}(\mod RG)$ are the perfect 
  complexes. 
\item Let $X$ be a perfect complex in $\bfD^{b}(\mod RG)$, then 
\[\quad\quad\quad V(X)=\{\langle\frakp,H^{\geqslant 1}\rangle\in \Spec
H\mid \frakp\in\Spec R
\text{\ with $X_\frakp\neq 0$ in $\bfD^{b}(\mod R_\frakp G)$}\}.\]
\item If $R=k$ is an algebraically closed field and $M$ in $\mod
  RG$ is viewed as a stalk complex concentrated in degree $0$, then
\[V^*(M)=V(M)\] 
where $V^*(M)$ is the variety of $M$ defined in
\cite{Ca}. 
\end{enumerate}
\end{thm}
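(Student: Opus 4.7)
The plan is to mirror the proof of Theorem~\ref{thm:ci}, with the free module $RG$ playing the role that the tensor identity $R$ played in the complete intersection case. The key observation is that $\Hom_\A^*(RG,RG) \cong RG$ is concentrated in degree zero, so $H^{\geqslant 1}$ acts as zero and $V(RG) = V(H^{\geqslant 1}) = V(H^+)$; thus $RG$ is itself a perfect object of $\A = \bfD^{b}(\mod RG)$, and the thick subcategory it generates inside $\A$ is exactly the category of bounded complexes of finitely generated projective $RG$-modules.

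For (a), the containment of perfect complexes in the perfect objects is then immediate. For the converse, let $X$ be a perfect object in $\A$ and pick a bounded-above projective resolution $p(X) \to X$ with $p(X)^i = 0$ for $i \gg 0$ and $p(X)$ exact to the left of $p(X)^{n-1}$ for some integer $n$. Applying Proposition~\ref{prop:trivvar} to the specific object $RG/\rad(RG)$, one obtains an integer $m$ with $\Hom_\A(X, (RG/\rad(RG))[i]) = 0$ for $|i| \geqslant m$. Setting $N = \max\{n,m\}$, the standard identification yields $\Ext^1_{RG}(\Im f^{N+1}, RG/\rad(RG)) = 0$. Choosing a minimal projective cover $P \to \Im f^{N+1}$ with kernel $K \subseteq \rad(P)$, the Ext-vanishing forces $\Hom(K/\rad(RG)\cdot K, RG/\rad(RG)) = 0$, hence $K = \rad(RG)\cdot K$; Nakayama applied successively to the finite-dimensional $R/\frakm G$-module $K/\frakm K$ and to the finitely generated $R$-module $K$ shows $K = 0$, so $\Im f^{N+1}$ is $RG$-projective and a soft truncation exhibits $X$ as a perfect complex.

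For (b), since $X$ is a perfect complex, $H^{\geqslant 1} \subseteq \sqrt{\Ann_H \Hom_\A^*(X,X)}$, so every prime in $V(X)$ has the form $\langle \frakp, H^{\geqslant 1}\rangle$ with $\frakp \in \Spec R$. Such a prime lies in $V(X)$ if and only if $\Hom_\A^*(X,X)_\frakp \neq 0$, and the graded Hom of a perfect complex has only finitely many nonzero pieces, each of which localizes to $\Hom_{\bfD^{b}(\mod R_\frakp G)}(X_\frakp, X_\frakp[i])$. As in the proof of Theorem~\ref{thm:ci}(c), this nonvanishing is equivalent to $X_\frakp \neq 0$ in $\bfD^{b}(\mod R_\frakp G)$.

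For (c), when $R = k$ is an algebraically closed field we have $H = H^*(G,k)$, and Carlson's variety $V^*(M)$ is by definition the support in $\Spec H$ of the graded $H$-module $\Ext^*_{kG}(M,M) \cong \Hom_\A^*(M,M)$, which is precisely $V(M)$. The main obstacle in the whole theorem is the converse direction of (a): one must simultaneously control the commutative $R$-structure (via Nakayama on the local ring $R$) and the noncommutative $RG$-structure (via the minimal projective cover and the semisimple quotient $RG/\rad(RG)$) in order to upgrade Ext-vanishing against simples into genuine $RG$-projectivity of the syzygy.
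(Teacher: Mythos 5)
Your parts (b) and (c) follow the paper's own argument (the paper simply refers back to the proof of Theorem~\ref{thm:ci}(c) and the $H$-module isomorphism $\Hom^*_{\bfD^{b}(\mod RG)}(M,M)\cong\Ext^*_{RG}(M,M)$). Your part (a) is correct but takes a genuinely different route for the converse direction. The paper tests perfectness against the modules $R/\frakp\,G$ for suitable $\frakp\in\Spec R$: given a finitely generated $RG$-module $M$, it chooses an $R$-epimorphism $f\colon M\to R/\frakp$ and uses the trace (averaging) map $\widetilde f(m)=\sum_{g\in G}f(gm)g^{-1}$ to produce an $RG$-homomorphism $M\to R/\frakp\,G$ whose image is not in $\frakm(R/\frakp\,G)$, and then argues that this gives a nonzero class in $\Ext^1_{RG}$ against the relevant cokernel. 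You instead test against the single semisimple quotient $S=RG/\rad(RG)$: the vanishing $\Ext^1_{RG}(\Im f^{N+1},S)=0$ forces $\Hom_S(K/\rad(RG)K,S)=0$ for the kernel $K$ of a minimal projective cover, hence $K=\rad(RG)K$, and then Nakayama twice (first over the artinian algebra $(R/\frakm)G$, then over the local ring $R$) gives $K=0$. Your argument is cleaner and more parallel to Theorem~\ref{thm:ci}(a), where the test object is the residue field; the paper's trace-map device effectively plays the same role but lands in $(R/\frakp)G$ rather than its semisimple top, so the paper has to track the condition ``image not in $\frakm(R/\frakp\,G)$'' where you get to work over a genuinely semisimple ring. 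Both arguments tacitly use that every finitely generated $RG$-module admits a projective cover with kernel inside the radical, a point worth flagging explicitly since it requires idempotents to lift modulo $J(RG)$ (the paper's sketch uses the same hypothesis when it writes ``where $P\to C$ is a projective cover''), but given that shared assumption your proof is sound.
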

\begin{proof}
(a) As before we have that all perfect complexes in $\bfD^{b}(\mod
RG)$ are perfect objects in $\bfD^{b}(\mod RG)$. 

Conversely, let $X$ be a perfect object in $\bfD^{b}(\mod
RG)$. There exists a complex of projective $R$-modules $p(X)$ and a
quasi-isomorphism $p(X)\to X$ such that $p(X)^i=(0)$ for $i\gg
0$. Also, there is an integer $n$ such that $p(X)$ is exact to the
left of $p(X)^{n-1}$. We want to show that $p(X)$ can be softly
truncated to a perfect complex. 

To this end we use the following observations.  Given any finitely
generated $RG$-module $M$, there exists an $R$-epimorphism $f\colon
M\to R/\frakp$ for some $\frakp$ in $\Spec R$. Then the map
$\widetilde{f}\colon M\to R/\frakp G$ given by $\widetilde{f}(m) =
\sum_{g\in G} f(gm)g^{-1}$ is an $RG$-homomorphism, with non-zero
image not contained in $\frakm (R/\frakp G)$. Furthermore, if $M$
occurs as a kernel $0\to M\to P\to C\to 0$, where $P\to C$ is a
projective cover, then there exists an $RG$-homomorphism
$\varphi\colon M\to R/\frakp G$ which induces a non-zero element in
$\Ext^1_{RG}(C,R/\frakp G)$.

Using these observations the proof can be completed in a similar
fashion as the proof of Theorem~\ref{thm:ci} (a).

(b) The proof is similar to the proof of Theorem~\ref{thm:ci} (c).

(c) This is immediate as $\Hom_{\bfD^{b}(\mod RG)}^*(M,M)$ and
$\Ext^*_{RG}(M,M)$ are isomorphic as $H$-modules. 
\end{proof}

\section{Finite dimensional algebras}\label{section:9}

Throughout this section let $\L$ be an indecomposable finite
dimensional algebra over an algebraically closed field $k$ with
Jacobson radical $\rrad$.  We consider the action of the triangulated
tensor category $\C=\bfD^{b}(\B)$ on $\A=\bfD^{b}(\mod\L)$, where $\B$
is the full subcategory of $\L$-$\L$-bimodules which are projective as
a left and as a right module.  The section is devoted to studying
support varieties of objects in $\A$.

A theory of support varieties for $\mod\L$ was introduced in \cite{SS}
and further developed in \cite{EHSST} using the Hochschild cohomology
ring of $\L$.  To ensure the existence of a good theory of support,
two finiteness conditions \textbf{Fg1} and \textbf{Fg2} were
introduced in \cite{EHSST}, which in \cite[Proposition 5.7]{S} are 
shown to be equivalent to the \textbf{Fg} condition:
\begin{enumerate}[\rm(i)]
\item $\HH^*(\L)$ is a Noetherian algebra, 
\item $\Ext^*_\L(\L/\rrad, \L/\rrad)$ is a finitely generated
  $\HH^*(\L)$-module.
\end{enumerate}
We show that our Assumption~\ref{ass:standing} is equivalent to the
condition \textbf{Fg}.  Furthermore, a perfect object in
$\bfD^{b}(\mod\L)$ is characterized as a perfect complex, and the
support variety of $X$ in $\bfD^{b}(\mod\L)$ is shown to be contained
in the union of the support varieties of homology modules of $X$ and
in the union of the support varieties of stalk complexes of which $X$
is built up. Also, the support variety of a module, as defined in
\cite{SS}, coincides with the support variety when viewed as a complex
concentrated in one degree.
 
Our usual Assumptions~\ref{ass:standing} and \ref{ass:function}
translate into the following \textbf{facts} and requirements for the
setting of this section:
\begin{enumerate}[\bf(1)]
\item $\C=(\bfD^{b}(\B),-\otimes_\L-,\frake=\L)$ is a
  triangulated tensor category acting on the triangulated category
  $\A=\bfD^{b}(\mod\L)$.
\item[(2)] $H$ is a positively graded-commutative Noetherian ring with
  a homomorphism of graded rings $H\to \End_\C^*(\frake)=\HH^*(\L)$.
\item[(3)] The left $H$-module $\Hom_{\bfD^{b}(\mod\L)}^*(a,b)$ is
  finitely generated for all objects $a$ and $b$ in
  $\bfD^{b}(\mod\L)$.
\setcounter{enumi}{3}
\item The functor $-*a\colon \bfD^{b}(\B)\to
  \bfD^{b}(\mod\L)$ is an exact functor for all objects $a$ in
  $\bfD^{b}(\mod\L)$.
\item There exists a left function object
  \[F'=\mathbb{R}\Hom_\L(-,-)\colon
  \bfD^{b}(\B)\times\bfD^{b}(\mod\L)\to \bfD^{b}(\mod\L)\] for
  the action of $\bfD^{b}(\B)$ on $\bfD^{b}(\mod\L)$.
\end{enumerate}

We start by discussing the relationship between the condition (3) and
\textbf{Fg}. 
\begin{prop}
  Suppose $\L$ and $H$ satisfy condition \emph{(2)}. Then the
  following are equivalent.
\begin{enumerate}[\rm(i)]
\item $\Ext^*_\L(\L/\rrad,\L/\rrad)$ is a finitely generated
  $H$-module.
\item $\Ext^*_\L(M,N)$ is a finitely generated
  $H$-module for all $M$ and $N$ in $\mod\L$.
\item $\Hom_{\bfD^{b}(\mod\L)}^*(X,Y)$ is a finitely generated
  $H$-module for all $X$ and $Y$ in $\bfD^{b}(\mod\L)$. 
\end{enumerate}
\end{prop}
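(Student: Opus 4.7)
The implications (iii) $\Rightarrow$ (ii) and (ii) $\Rightarrow$ (i) are immediate. For the first, the fully faithful embedding $\mod\L \hookrightarrow \bfD^{b}(\mod\L)$ as stalk complexes in degree $0$ identifies $\Hom^*_{\bfD^{b}(\mod\L)}(M[0], N[0])$ with $\Ext^*_\L(M, N)$ as $H$-modules. For the second, specialize to $M = N = \L/\rrad$.

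The plan for the two non-trivial directions rests on the following Noetherian principle: given any triangle $A \to B \to C \to A[1]$ in $\bfD^{b}(\mod\L)$ and any object $Y$, applying $\Hom^*_{\bfD^{b}(\mod\L)}(-,Y)$ yields a long exact sequence of $H$-modules whose connecting maps are $H$-linear (using Proposition~\ref{prop:gradcomm} to identify the left and right $H$-actions on Hom-spaces). Since $H$ is Noetherian by hypothesis~(2), if two of the three Hom-spaces attached to $A, B, C$ are finitely generated, then so is the third, and the analogous statement holds when applying $\Hom^*_{\bfD^{b}(\mod\L)}(X,-)$.

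For (i) $\Rightarrow$ (ii), decompose $\L/\rrad$ into simples $S_1,\ldots,S_r$; each $\Ext^*_\L(S_i,S_j)$ is then a direct summand of $\Ext^*_\L(\L/\rrad,\L/\rrad)$, hence finitely generated over $H$. Fixing $N$ and inducting on the composition length of $M$, each short exact sequence $0 \to M' \to M \to S \to 0$ with $S$ simple induces a long exact Ext sequence to which the Noetherian principle applies, giving finite generation of $\Ext^*_\L(M, S_j)$ for all $j$. A symmetric induction on the composition length of $N$ then completes the reduction to $\Ext^*_\L(M, N)$ for arbitrary $M, N \in \mod\L$.

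For (ii) $\Rightarrow$ (iii), choose a bounded representative $X^\bullet$ of $X$ consisting of finitely generated $\L$-modules. The stupid-truncation triangle
\[
\sigma^{>k}(X^\bullet) \to X^\bullet \to \sigma^{\leq k}(X^\bullet) \to \sigma^{>k}(X^\bullet)[1]
\]
reduces, by induction on the number of non-zero terms of $X^\bullet$, the finite-generation question to the case where $X$ is a stalk complex; a parallel dévissage on $Y$ brings the problem to $X$ and $Y$ both stalk complexes, which is exactly (ii). The point requiring most care is verifying that every step of the dévissage is uniformly $H$-linear across the triangles employed—precisely what Proposition~\ref{prop:gradcomm} together with hypothesis~(2) guarantee—after which both inductions proceed routinely.
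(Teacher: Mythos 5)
Your proof is correct, and it follows the paper's strategy for the equivalence (ii) $\Leftrightarrow$ (iii), but it differs on (i) $\Leftrightarrow$ (ii).

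For (ii) $\Rightarrow$ (iii), the paper simply says this ``follows by induction on the length of the finite complexes'' using that $H$ is Noetherian; your stupid-truncation d\'evissage is exactly that argument made explicit, and the observation that $H$-linearity of the connecting maps plus $H$ Noetherian forces two-out-of-three finite generation in a long exact sequence is the correct underlying mechanism (the extension/sub/quotient bookkeeping works because of Noetherianity). For (iii) $\Rightarrow$ (ii), both you and the paper use the identification $\Ext^*_\L(M,N)\simeq \Hom^*_{\bfD^b(\mod\L)}(M,N)$ of graded $H$-modules.

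Where you genuinely diverge is on (i) $\Leftrightarrow$ (ii): the paper outsources this to \cite[Proposition~1.4]{EHSST}, whereas you give a self-contained proof by decomposing $\L/\rrad$ into simples (valid since $k$ is algebraically closed, so $\L/\rrad$ is semisimple and every simple $\L$-module occurs as a summand) and then doing a double induction on composition length. This is essentially the content of the cited result, so you are not changing the mathematics, but your version makes the paper more self-contained at the cost of a longer proof. The two-out-of-three Noetherian lemma you use is the same engine that drives the cited proof, so the approaches are morally identical; yours simply inlines the reference.

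One small caveat worth noting: when you change which variable the induction runs on (from $M$ to $N$), you are implicitly invoking Proposition~\ref{prop:gradcomm} to pass between the left and right $H$-actions on $\Hom^*$; this is fine, since graded-commutativity up to sign means the annihilators and hence the finite-generation status agree, but it deserves the explicit mention you gave it.
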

\begin{proof}
The fact that (i) and (ii) are equivalent is proven in
\cite[Proposition 1.4]{EHSST}. 

Since $\Ext_\L^*(M,N)\simeq \Hom_{\bfD^{b}(\mod\L)}^*(M,N)$ as
$H$-modules, we infer that (iii) implies (ii). Now assume that (ii) is
satisfied. Since any object in $\bfD^{b}(\mod\L)$ is isomorphic to a
bounded complex and $H$ is a Noetherian ring, it follows by induction
on the length of the finite complexes that (iii) holds.
\end{proof}

Hence, it follows that \textbf{Fg} is equivalent to the
Assumption~\ref{ass:standing}.  From now on we assume that these
conditions are satisfied for $\L$ and $H$ and in addition that $H^0$
is a (commutative) local artinian algebra.

As we noted in the above proof, $\Ext_\L^*(M,N)$ and
$\Hom_{\bfD^{b}(\mod\L)}^*(M,N)$ are isomorphic as $H$-modules for
all finitely generated $\L$-modules $M$ and $N$.  Hence we have the
following.
\begin{prop}
Any module $M$ in $\mod\L$ viewed as a stalk complex concentrated
in degree $0$ (any single degree) satisfies
\[V_H(M) = V(M),\]
where $V_H(M)$ denotes the support variety of $M$ defined in \cite{SS,
  EHSST}. 
\end{prop}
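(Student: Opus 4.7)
The plan is to recognize that both sides of the equality $V_H(M) = V(M)$ are, by construction, the zero set in $\Spec H$ of the annihilator of a single graded $H$-module, and that this $H$-module is the same on both sides up to a natural isomorphism compatible with the $H$-action. So the proof reduces to identifying the two $H$-module structures on $\Ext^*_\L(M,M) \simeq \Hom^*_{\bfD^b(\mod\L)}(M,M)$.

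First, I would recall definitions. The variety $V_H(M)$ of \cite{SS,EHSST} is defined as $V(\Ann_H \Ext^*_\L(M,M))$, where $H$ acts on $\Ext^*_\L(M,M)$ via the map $H \to \HH^*(\L)$ from hypothesis (2) composed with the Yoneda/Hochschild action $\HH^*(\L) \to \End^*_{\mod\L}(M)$. On the other hand, by Proposition~\ref{prop:annihilator} and the definition in Section~\ref{section:3}, $V(M) = V(M,M) = \Supp_H \Hom^*_{\bfD^b(\mod\L)}(M,M) = V(A(M,M))$, where $A(M,M) = \Ann_H \Hom^*_{\bfD^b(\mod\L)}(M,M)$ and the $H$-module structure is induced by the homomorphism $\varphi_M \colon H \to \End^*_\C(\frake) \to \End^*_{\bfD^b(\mod\L)}(M)$ constructed in Section~\ref{section:2}.

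Next, the shift-invariance Proposition~\ref{prop:elempropvariety}(b) gives $V(M) = V(\Sigma^i M)$ for any $i$, and $\Ext^*_\L$ is likewise shift-invariant, so it suffices to treat the case where $M$ sits as a stalk complex in degree zero. There is a standard natural isomorphism $\Ext^n_\L(M,M) \simeq \Hom_{\bfD^b(\mod\L)}(M, M[n])$ for all $n \geqslant 0$, and for $n < 0$ both sides vanish; this gives an isomorphism of graded abelian groups. The key point to check is that this identification is $H$-linear. Unwinding the definition of $\varphi_M$ from Section~\ref{section:2}, the action of a homogeneous element $h \in H$ of degree $p$ on $\Hom^*_{\bfD^b(\mod\L)}(M, M)$ is given by composition with the natural morphism $M \simeq \L \otimes_\L M \xrightarrow{h * 1_M} \L[p] \otimes_\L M \simeq M[p]$. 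Under the isomorphism with $\Ext$, this matches precisely the action of $h$ on $\Ext^*_\L(M,M)$ through $\HH^*(\L)$, since the composition $H \to \HH^*(\L) \to \End^*_{\mod\L}(M)$ is, by construction of the Hochschild action, also induced by tensoring $M$ with a representative of the Hochschild class and using the left function object adjunction.

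The principal obstacle is this last compatibility check: one needs to verify that the $H$-action obtained from the tensor action of $\bfD^b(\B)$ on $\bfD^b(\mod\L)$, namely $\varphi_M$, coincides with the classical Hochschild-cohomology action on $\Ext^*_\L(M,M)$ via the tensor-then-apply construction. This is essentially a bookkeeping verification on Yoneda compositions versus tensor products, but once in place, equality of annihilators forces $V_H(M) = V(M)$. This handles a stalk in any single degree via the shift invariance noted above.
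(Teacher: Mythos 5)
Your proposal follows essentially the same route as the paper: both sides reduce to the single observation that $\Ext^*_\L(M,M)$ and $\Hom^*_{\bfD^b(\mod\L)}(M,M)$ are isomorphic as graded $H$-modules, whence the supports agree. You spell out the compatibility between the tensor-action $H$-structure via $\varphi_M$ and the classical Hochschild action on $\Ext^*_\L(M,M)$, which the paper takes for granted (it just asserts the isomorphism ``as $H$-modules'' in the preceding proof of the finite-generation equivalence).
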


Since any object $X$ in $\bfD^{b}(\mod\L)$ is quasi-isomorphic to a
bounded complex, $X$ is in $\Thick(\L/\rrad)$. Using this, the
following is an easy consequence of the general theory and
\cite[Proposition 4.4]{SS}. 
\begin{prop}
For any complex $X$ in $\bfD^{b}(\mod\L)$ we have the following:
\begin{enumerate}[\rm(a)]
\item $V(X)=V(X,\L/\rrad)=V(X,X)=V(\L/\rrad,X)$.
\item $V(X) \subseteq V(\L/\rrad)=\Spec H$.\qed
\end{enumerate}
\end{prop}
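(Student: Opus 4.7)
The statement is labelled as an ``easy consequence,'' and the key input is the observation just made in the excerpt that every $X$ in $\bfD^{b}(\mod\L)$ lies in $\Thick(\L/\rrad)$. My strategy is to run two standard thick-subcategory arguments using the elementary variety properties already established in Proposition~\ref{prop:elementarypropvar} and Proposition~\ref{prop:annihilator}, and then cite \cite[Proposition 4.4]{SS} at the very end to identify $V(\L/\rrad)$ with $\Spec H$.

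For part~(a), I start from the defining equality $V(X)=V(X,X)$. The inclusions $V(X,\L/\rrad)\subseteq V(X,X)$ and $V(\L/\rrad,X)\subseteq V(X,X)$ are immediate from Proposition~\ref{prop:annihilator}(b). For the reverse inclusion, fix $X$ and consider the subcategory
\[
\X=\{Y\in\bfD^{b}(\mod\L)\mid V(X,Y)\subseteq V(X,\L/\rrad)\}.
\]
By Proposition~\ref{prop:elementarypropvar}(a)(i) any two of three vertices of a triangle being in $\X$ forces the third to be in $\X$; by (b) the subcategory $\X$ is stable under $\Sigma^{\pm1}$; and by (c) it is closed under direct summands. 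Since $\L/\rrad\in\X$ trivially, we obtain $\X\supseteq\Thick(\L/\rrad)=\bfD^{b}(\mod\L)$, so in particular $X\in\X$ and therefore $V(X,X)\subseteq V(X,\L/\rrad)$. The same argument applied in the first variable (using Proposition~\ref{prop:elementarypropvar}(a)(ii)) gives $V(X,X)\subseteq V(\L/\rrad,X)$, which completes~(a).

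For part~(b), I simply chain the previous equality with Proposition~\ref{prop:annihilator}(a):
\[
V(X)\;=\;V(X,\L/\rrad)\;\subseteq\;V(X,X)\cap V(\L/\rrad,\L/\rrad)\;\subseteq\;V(\L/\rrad).
\]
The identification $V(\L/\rrad)=\Spec H$ under the standing \textbf{Fg} hypothesis is exactly what \cite[Proposition 4.4]{SS} provides, since every homogeneous prime of $H$ supports $\Ext^{*}_{\L}(\L/\rrad,\L/\rrad)$ once the finite-generation condition is in force.

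I do not anticipate a genuine obstacle: the only point that requires a line of care is verifying that $\X$ really is closed under triangles in both variables, which needs the two separate clauses (i) and (ii) of Proposition~\ref{prop:elementarypropvar}(a) (equivalently, the left/right symmetry of the annihilator guaranteed by Proposition~\ref{prop:gradcomm}). Beyond that, every step is a direct citation of a property already proved in \S\S\ref{section:3}--\ref{section:6} or of the referenced result from \cite{SS}.
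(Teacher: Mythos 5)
Your proof is correct and spells out precisely the argument the paper compresses into ``an easy consequence of the general theory and \cite[Proposition 4.4]{SS}'': the observation $\bfD^{b}(\mod\L)=\Thick(\L/\rrad)$ combined with the thick-subcategory closure properties from Propositions~\ref{prop:elementarypropvar} and~\ref{prop:annihilator}, and the cited result from \cite{SS} for the identification $V(\L/\rrad)=\Spec H$. The $\X$-argument in both variables and the chain in (b) are exactly as intended, so this is the same approach with the details filled in.
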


Next we show that the variety of an object in $\bfD^{b}(\mod\L)$ is
contained in the union of the varieties of its homology modules and in
the union of the support varieties of the stalk complexes from which
it is built.  Furthermore the dimension of the support variety and the
complexity of an object are shown to be equal, and periodic stalk
complexes are characterized as eventually $\Omega$-periodic
$\L$-modules.
\begin{prop}\label{prop:elemproperties}
Let $X$ be in $\bfD^{b}(\mod\L)$. Then the following assertions
hold.
\begin{enumerate}[\rm(a)]
\item $V(X)\subseteq \cup_{i\in\mathbb{Z}}V(H^i(X))$.
\item $V(X)\subseteq \cup_{i\in\mathbb{Z}} V(X^i)$.
\item $\dim V(X) = \cx(X)$.
\item A module $M$ in $\mod\L$ is a periodic object in
  $\bfD^{b}(\mod\L)$ if and only if $M$ is an eventually $\Omega$-periodic
  $\L$-module.
\end{enumerate}
\end{prop}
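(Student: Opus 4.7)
The plan is to treat the four parts in turn. Parts (a) and (b) are standard inductions on the ``size'' of $X$ using truncation triangles, part (c) is a Hilbert-polynomial growth equality applied to $\End_\A^*(X)$, and part (d) amounts to unpacking periodicity in the singularity category $\A/\A^\perf$.

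For (a), I would induct on the number of non-zero cohomologies of $X$. With a single non-zero $H^i(X)$, one has $X\simeq H^i(X)[-i]$, so Proposition~\ref{prop:elempropvariety}(b) gives $V(X)=V(H^i(X))$. For the inductive step, a canonical truncation triangle $\tau_{\leqslant n}X\to X\to \tau_{>n}X\to \Sigma\tau_{\leqslant n}X$ together with Proposition~\ref{prop:elempropvariety}(a) yields $V(X)\subseteq V(\tau_{\leqslant n}X)\cup V(\tau_{>n}X)$, and each summand has strictly fewer non-zero cohomologies. Part (b) runs the same argument with the brutal truncations $\sigma_{\leqslant n}X\to X\to \sigma_{>n}X\to \Sigma\sigma_{\leqslant n}X$; the base case is the stalk complex $X^i[-i]$, whose variety equals $V(X^i)$ by Proposition~\ref{prop:elempropvariety}(b).

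For (c), both $\dim V(X)$ and $\cx(X)$ should equal the polynomial rate of growth of $\mingen_{H^0}\End_\A^n(X)$ as $|n|\to\infty$. On the left, $V(X)=\Supp_H\End_\A^*(X)$, and a standard graded Hilbert-polynomial argument for finitely generated modules over a finitely generated graded-commutative Noetherian algebra $H$ with artinian local $H^0$ identifies the Krull dimension of this support with the polynomial growth of $\dim_{H^0/\frakm}\bigl(\End_\A^n(X)\otimes_{H^0}H^0/\frakm\bigr) = \mingen_{H^0}\End_\A^n(X)$. On the right, the remark following the definition of complexity says $\cx(X)$ may be computed on the single object $b=X$ as exactly this growth rate.

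For (d), write $\D_{sg}=\A/\A^\perf$. For a module $M$, the projective cover short exact sequence $0\to \Omega M\to P\to M\to 0$ yields a triangle in $\A$ whose middle term $P$ is perfect, hence becomes an isomorphism $\Sigma^{-1}M\simeq \Omega M$ in $\D_{sg}$; iterating gives $\Sigma^{-n}M\simeq \Omega^n M$ in $\D_{sg}$. Thus $M$ is a periodic object of period $n$ iff $M$ has infinite projective dimension and $M\simeq \Omega^n M$ in $\D_{sg}$. I would then invoke the classical fact (going back to Buchweitz) that two modules $N_1,N_2$ become isomorphic in $\D_{sg}$ iff $\Omega^s N_1$ and $\Omega^s N_2$ are stably isomorphic for $s\gg 0$; applied to $N_2=\Omega^n M$ this reads precisely as eventual $\Omega$-periodicity of $M$. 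The main obstacle will be (c): the precise comparison between the Krull dimension of $\Supp_H\End_\A^*(X)$ and the asymptotic polynomial growth of minimal generator counts over the artinian local $H^0$, which requires some care since $H$ is only graded-commutative (passing to $H_{\even}$ reduces to the honestly commutative case); once this is in hand, the remaining parts are formal.
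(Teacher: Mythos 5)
Your proposal is correct, and for parts (a)--(c) it runs along essentially the same lines as the paper: (a) and (b) are truncation-triangle inductions using Proposition~\ref{prop:elempropvariety}, and (c) reduces modulo $\frakr H$ to a finitely generated graded algebra over the residue field of the artinian local $H^0$ and then invokes the equality of Krull dimension and polynomial growth, exactly as in the paper (the caveat you raise about graded-commutativity is the same issue the paper quietly absorbs in the phrase ``finitely generated graded algebra over a field''). The one place you take a genuinely different route is (d). The paper invokes Buchweitz's theorem that $\bfD^{b}(\mod\L)/\bfD^\perf(\mod\L)\simeq\uMCM(\L)$ for the Gorenstein algebra $\L$ (Gorenstein following from \textbf{Fg}), constructs the equivalence $F$ explicitly by splicing projective resolutions with coresolutions, and tracks the relation $\Omega_\L^s(F(M))\simeq\Omega_\L^t(M)$ through it. You instead use the stabilization description of morphisms in the Verdier quotient,
\[\Hom_{\bfD^{b}(\mod\L)/\bfD^\perf(\mod\L)}(M,N)\;\cong\;\varinjlim_{s}\uHom_\L(\Omega_\L^{s}M,\Omega_\L^{s}N),\]
so that an isomorphism $M\simeq\Omega_\L^{n}M$ in the quotient unwinds at once to a stable isomorphism $\Omega_\L^{s}M\simeq\Omega_\L^{s+n}M$ for $s\gg0$, and conversely. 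This is valid, shorter, and makes no explicit appeal to the Gorenstein property; the paper's route buys the explicit functor $F$. To fully match the paper's definition of a periodic object you should still record the two small facts you implicitly use: the projective cover triangle gives $\Sigma^{-1}M\simeq\Omega_\L M$ in the quotient, and $M$ is non-perfect precisely when it has infinite projective dimension (so ``eventually $\Omega$-periodic'' and ``not perfect plus shift-periodic'' really are talking about the same modules).
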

\begin{proof}
  (a) Here again, we use that any object in $\bfD^{b}(\mod\L)$ is
  isomorphic to a bounded complex. If $X$ is a stalk complex, the
  claim clearly holds.  Suppose that the claim has been shown for all
  complexes of length $n-1$. Let $X\colon \cdots 0\to X^1\extto{d^1}
  X^2\extto{d^2} \cdots \extto{d^{n-1}} X^n\to 0\cdots$ be a complex
  of length $n$. Consider the triangle
\[\xymatrix@C=15pt{%
H^1(X)\ar[d] & & 
\ar@{..}[r]    &  
0\ar[r]\ar[d]  & 
\Ker d^1\ar[d]\ar[r] & 
0\ar[d]\ar@{..}[r] & & & \\
X\ar[d] & & 
\ar@{..}[r] & 
0\ar[r]\ar[d]  & 
X^1\ar@{=}[d]\ar[r] &
X^2\ar@{=}[d]\ar[r] &
\ar@{..}[r] & 
\ar[r] &
X^n\ar@{=}[d]\ar[r] & 
0\ar@{..}[r] & \\
Y &
\ar@{..}[r] & 
0 \ar[r] & 
\Ker d^1\ar[r]  & 
X^1\ar[r] &
X^2\ar[r] &
\ar@{..}[r] & 
\ar[r] &
X^n\ar[r] & 
0\ar@{..}[r] & }\]
By Proposition~\ref{prop:elementarypropvar} $V(X)\subseteq V(H^1(X))\cup
V(Y)$. It is easy to see that $Y$ is isomorphic to $Z\colon
\cdots 0\to X^2/\Im d^1\extto{\overline{d^2}} X^3\extto{d^3} \cdots
\extto{d^{n-1}} X^n\to 0\cdots$, so that $V(Y)=V(Z)$ and by
induction 
\begin{align}
V(X) & \subseteq V(H^1(X)) \cup (\cup_{i=2}^n
V(H^i(Z)))\notag \\
& = \cup_{i=1}^n V(H^i(X))\notag 
\end{align}
since $H^i(Z)=H^i(X)$ for $i=2,3,\ldots,n$. 

(b) Use similar arguments as in the proof of (a).

(c) By assumption $H^0$ is a local ring. Let $\mathfrak{r}$ be the
unique maximal ideal in $H^0$. Since $\mathfrak{r}$ and
$\mathfrak{r}H$ are nilpotent ideals and therefore contained in any
prime ideal, the dimensions
\begin{align}
\dim V(X) = & \dim H/\Ann_H\Hom_{\bfD^b(\mod\L)}^*(X,X)\notag\\
= & \dim \left((H/\mathfrak{r}H)/\Ann_{H/\mathfrak{r}H}
(\Hom_{\bfD^b(\mod\L)}^*(X,X)\otimes_{H^0} H/\mathfrak{r}H)\right)\notag
\end{align} coincide.
For this we observe that if $\overline{\eta}$ is in
\[\Ann_{H/\mathfrak{r}H} \Hom_{\bfD^b(\mod\L)}^*(X,X)\otimes_{H^0} H^0/\mathfrak{r},\]
the element $\eta$ is in \[\sqrt{\Ann_H\Hom_{\bfD^b(\mod\L)}^*(X,X)},\]
and if $\eta$ is in \[\Ann_H\Hom_{\bfD^b(\mod\L)}^*(X,X)\] then
$\overline{\eta}$ is in \[\Ann_{H/\mathfrak{r}H}
\Hom_{\bfD^b(\mod\L)}^*(X,X)\otimes_{H^0} H.\] Now the claim follows
from the fact that the Krull dimension of a finitely generated graded
algebra over a field equals the polynomial growth of that graded ring. 

(d) By the assumptions on $\L$, it must be a Gorenstein algebra
\cite{SS}. Then $\bfD^{b}(\mod\L)/\bfD^\perf(\mod\L)$ is equivalent
to $\uMCM(\L)$, where $\MCM(\L)$ is the full subcategory of the
$\mod\L$ consisting of the maximal Cohen-Macaulay modules and
$\uMCM(\L)$ is $\MCM(\L)$ modulo the ideal generated by the projective
modules \cite{Bu1986}. An equivalence 
\[F\colon \bfD^{b}(\mod\L)/\bfD^\perf(\mod\L)\to \uMCM(\L)\] 
is given as follows. Given an object $X$ in $\bfD^{b}(\mod\L)$,
construct a projective resolution $p(X)\to X$ of $X$. Then
$p(X)^{\geqslant t}$ is quasi-isomorphic to a maximal Cohen-Macaulay
module $M'$ for some $t$. Take a coresolution of $M'$ in projective
$\L$-modules and splice it with $p(X)^{\geqslant t}$ to get an acyclic
complex $X'$. Define $F(X)$ to be the image of the differential
starting in degree $0$ of $X'$. How $F$ acts on morphisms follows
naturally. Moreover, for any module $M$ we have that
$\Omega_\L^s(F(M))$ is isomorphic to $\Omega_\L^t(M)$ for some
positive integers $s$ and $t$.

Assume that $M\simeq M[n]$ in $\bfD^{b}(\mod\L)/\bfD^\perf(\mod\L)$
for some positive integer $n$. Then since $F$ is a triangle
equivalence, $F(M)\simeq F(M[n])\simeq \Omega_\L^{-n}(F(M))$, and
consequently $\Omega_\L^n(F(M))\simeq F(M)$. Then by the above remarks
it follows that $\Omega_\L^{n+p}(M)\simeq \Omega_\L^p(M)$ for some
positive integer $p$ and $M$ is eventually $\Omega$-periodic. 

Assume that $M$ is eventually $\Omega$-periodic. For any $\L$-module
$N$, it is easy to see that $N\simeq \Omega^t(N)[t]$ in
$\bfD^b(\mod\L)/\bfD^{\perf}(\mod\L)$ for all positive integers
$t$. Since by assumption $\Omega^{p+q}_\L(M)\simeq \Omega^q_\L(M)$ for
some positive integers $p$ and $q$, we infer that $M\simeq M[n]$ in
$\bfD^{b}(\mod\L)/\bfD^\perf(\mod\L)$ for some positive integer $n$,
that is, $M$ is periodic in $\bfD^{b}(\mod\L)$.
\end{proof}
\begin{remark}
  The inclusion in (a) is not an equality in general, as there exist
  perfect complexes with homology whose support varieties are not
  contained in $V(H^+)$.
\end{remark}

Now we show that the perfect objects in $\bfD^{b}(\mod\L)$ are
exactly the perfect complexes in $\bfD^{b}(\mod\L)$.
\begin{prop}
  Let $X$ be in $\bfD^{b}(\mod\L)$. Then an object $X$ is perfect if and
  only if $X$ is isomorphic to a perfect complex.
\end{prop}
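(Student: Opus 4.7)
My plan follows the template of the proofs of Theorem~\ref{thm:ci}~(a) and of the analogous statement for group rings in Section~\ref{section:8}, adapted to this setting. For the forward direction, I will first observe that the tensor identity $\L$ (viewed as a stalk complex in degree $0$) is itself a perfect object: indeed $\End^*_{\bfD^{b}(\mod\L)}(\L) \cong \L$ is concentrated in degree zero, so $H^{\geqslant 1}$ annihilates it and lies in $A(\L,\L)$; since every prime ideal contains $\sqrt{0_{H^0}}$, this gives $V(\L) \subseteq V(H^{\geqslant 1}) = V(H^+)$. Since every perfect complex in $\bfD^{b}(\mod\L)$ lies in $\Thick(\L)$ and the class of perfect objects is a thick subcategory of $\bfD^{b}(\mod\L)$ by Proposition~\ref{prop:elempropvariety}, every perfect complex is therefore a perfect object.

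For the backward direction, let $X$ be a perfect object. By Proposition~\ref{prop:trivvar} there exists an integer $m$ with $\Hom_{\bfD^{b}(\mod\L)}(X, (\L/\rrad)[i]) = 0$ for all $|i| \geqslant m$. I will take a projective resolution $p(X) \to X$ with each $p(X)^i$ a finitely generated projective $\L$-module and $p(X)^i = 0$ for $i \gg 0$ (possible since $\L$ is semiperfect and $X$ has finite-dimensional cohomology), and choose $a$ so that $p(X)$ is exact at every position strictly below $a$. Setting $Z^j := \Ker d^j$, this exactness yields short exact sequences $0 \to Z^{j-1} \to p(X)^{j-1} \to Z^j \to 0$ for $j < a$. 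Iterated dimension shifting along these sequences will identify $\Hom_{\bfD^{b}(\mod\L)}(X, (\L/\rrad)[i]) \cong \Ext^1_\L(Z^N, \L/\rrad)$ for suitable $N$ (depending on $i$, and pushed far below $a$ as $|i|$ grows).

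Choosing $|i|$ large enough, the perfect assumption will then force $\Ext^1_\L(Z^N, \L/\rrad) = 0$ for some $N < a$. Since $\L/\rrad$ is the direct sum of all simple $\L$-modules and $Z^N$ is finitely generated, $Z^N$ must be projective. By exactness at position $N$ the short exact sequence $0 \to Z^N \to p(X)^N \to p(X)^N/Z^N \to 0$ will split, so $p(X)^N/Z^N$ is projective as well, making the soft truncation $\tau^{\geqslant N} p(X)$ a bounded complex of finitely generated projectives quasi-isomorphic to $X$. The main obstacle will be carefully establishing the identification between $\Hom_{\bfD^{b}(\mod\L)}(X, (\L/\rrad)[i])$ and $\Ext^1_\L(Z^N, \L/\rrad)$ via iterated dimension shift, as in the proof of Theorem~\ref{thm:ci}~(a); the remaining ingredients (splitting of the short exact sequence and projectivity from vanishing of $\Ext^1$ against all simples) are classical over finite dimensional algebras.
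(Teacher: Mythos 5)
Your proposal follows essentially the same route as the paper. The forward direction in the paper uses Proposition~\ref{prop:elemproperties}(b) (i.e.\ $V(X)\subseteq\cup_i V(X^i)$) together with $V(\L)\subseteq V(H^+)$; your version via thick-subcategory closure and $\Thick(\L)$ is the same observation packaged differently, and both are fine. For the backward direction the paper also reduces to vanishing of $\Hom(pX,\L/\rrad[i])$, and it additionally uses vanishing against $D(\L^\op)[i]$ to locate the degree beyond which $pX$ is exact; you instead read off exactness from the boundedness of the cohomology of $X$, which serves the same purpose.

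One sentence in your reduction is logically reversed and should be repaired. You write that since $Z^N$ is projective, the short exact sequence $0\to Z^N\to p(X)^N\to p(X)^N/Z^N\to 0$ splits, \emph{hence} $p(X)^N/Z^N$ is projective. But projectivity of the kernel of a surjection does not force the surjection to split; the sequence splits precisely because $p(X)^N/Z^N$ is projective, which is what you are trying to establish. The correct argument is that $p(X)^N/Z^N\cong Z^{N+1}$ (using exactness at $N+1$ as well, so take $N$ at least two below $a$), and that the vanishing $\Hom_{\bfD^b(\mod\L)}(X,\L/\rrad[i])=0$ for \emph{all} $|i|\geqslant m$ makes $Z^j$ projective for every $j$ below a common threshold, in particular both $Z^N$ and $Z^{N+1}$. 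Then $p(X)^N/Z^N=Z^{N+1}$ is projective, the sequence splits, and the soft truncation is a perfect complex. This is precisely what the paper's phrase ``$pX$ is split exact beyond degree $N$'' encodes. With that small repair your argument matches the paper's.
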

\begin{proof}
  Suppose that $X$ is isomorphic to a perfect complex $P$. Since
  $V(\L)\subseteq V(H^+)$, it follows directly from
  Proposition~\ref{prop:elemproperties} (b) that $X$ is a perfect
  object.

Suppose that $X$ in $\bfD^{b}(\mod\L)$ is perfect.  Then
$\Hom_{\bfD^{b}(\L)}(X,\L/\rrad[i])=(0)$ for all $i$ such that
$|i|\geqslant N$ for some $N$. There exists a complex of projective
modules $pX$ and a quasi-isomorphism $pX\to X$ such that $(pX)^i=(0)$
for $i\gg 0$. By assumption
\[(0)=\Hom_{\bfD^{b}(\L)}(pX,\L/\rrad[i])\simeq
\Hom_{\bfK^{^-,b}(\L)}(pX,\L/\rrad[i])\] for $|i|\geqslant N$. In particular,
$\Hom_{\bfK^{-,b}(\L)}(pX,D(\L^\op)[i])=(0)$ when $|i|\geqslant N$, so that
$pX$ is exact beyond $N$ from degree zero. Then having
$\Hom_{\bfK^{-,b}(\L)}(pX,\L/\rrad[i])=(0)$ when $|i|\geqslant N$ implies that
$pX$ is split exact beyond degree $N$. Hence we can choose $pX$
such that $(pX)^i=(0)$ for $|i| > N$. This proves that $X$ is
isomorphic to the perfect complex $pX$.
\end{proof}

To discuss properties of complexes with complexity $d>0$ we need the
following characterization of the Koszul objects $\L\kos h$ for
$h\colon \L\to \L[n]$ in the setting of this section when $n\geqslant
1$.

\begin{lem}
  Let $h\colon \L\to \L[n]$ be in
  $\Hom_{\bfD^{b}(\B)}(\L,\L[n])$ for $n\geqslant 1$. Then there exists a
  $\L^e$-module $M_h$ such that $\L\kos h$ is quasi-isomorphic to a
  shift of the complex
\[\cdots 0\to M_h\to P^{-n+2}\to \cdots \to P^{-1}\to P^0\to
0\cdots\] 
with homology isomorphic to $\L$ in degrees $0$ and $n$.
\end{lem}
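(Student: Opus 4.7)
The plan is to construct $M_h$ via a pushout from a projective bimodule resolution of $\L$, and then identify the resulting complex with $\L\kos h$ through the Yoneda interpretation of the class $h$. First I would fix a projective resolution
\[
\cdots \to P^{-k}\to \cdots \to P^{-1}\to P^0\to \L\to 0
\]
of $\L$ in $\mod\L^e$ with each $P^{-i}$ projective over $\L^e$, so that in particular each $P^{-i}$ lies in $\B$. Under the standard identification $\Hom_{\bfD^{b}(\B)}(\L,\L[n])\simeq \Ext^n_{\L^e}(\L,\L)$, the morphism $h$ is represented by a homomorphism $\bar h\colon \Omega^n(\L)\to \L$, where $\Omega^n(\L)$ is the $n$-th syzygy of $\L$ sitting inside $P^{-n+1}$ as $\ker(P^{-n+1}\to P^{-n+2})$ (for $n\geqslant 2$; for $n=1$ one takes $\Omega^1(\L)=\ker(P^0\to \L)$).

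Next I would form $M_h$ as the pushout of the inclusion $\Omega^n(\L)\hookrightarrow P^{-n+1}$ along $\bar h$. The universal property of the pushout, combined with the exactness of the original resolution, produces a Yoneda $n$-extension
\[
0 \to \L \to M_h \to P^{-n+2}\to \cdots \to P^{-1}\to P^0\to \L\to 0
\]
whose class in $\Ext^n_{\L^e}(\L,\L)$ is $h$. Removing the two outer copies of $\L$ produces the complex displayed in the statement, and a direct calculation from exactness of the Yoneda sequence shows that its homology is $\L$ at the two extremal positions (as the kernel of the leftmost map and as the cokernel of the rightmost map) and vanishes in between, as claimed.

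To match an appropriate shift of this complex with $\L\kos h$, I would split the Yoneda $n$-extension into $n$ short exact sequences by passing to successive images; each such short exact sequence lifts to a distinguished triangle in $\bfD^{b}(\B)$. Splicing these triangles by iterated application of the octahedral axiom yields both that the composite connecting morphism $\L\to \L[n]$ produced by the extension agrees with $h$, and that the total cone is a shift of the complex $[M_h\to P^{-n+2}\to \cdots \to P^0]$. The main obstacle will be the bookkeeping in this iterated octahedral argument, keeping track of signs and shift conventions throughout; however, the pushout construction of $M_h$ is tailored precisely so that the composite connecting morphism equals $h$, which in turn forces the resulting cone to be identified with $\L\kos h$ up to the required shift.
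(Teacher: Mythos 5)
Your construction of $M_h$ is exactly the one the paper uses: push out the short exact sequence $0\to\Omega^n_{\L^e}(\L)\to P^{-n+1}\to \Omega^{n-1}_{\L^e}(\L)\to 0$ along the cocycle $\bar h\colon \Omega^n_{\L^e}(\L)\to\L$ representing $h$. The difference is in how you then identify the resulting complex with $\L\kos h$. The paper works entirely at the chain level: it represents $h$ as a chain map from the brutally truncated resolution $\mathbb{P}_t$ of $\L$ to the stalk complex $\L[n]$, writes out the mapping cone $\L\kos h$ explicitly, and exhibits an acyclic subcomplex of that cone whose quotient is precisely $[M_h\to P^{-n+2}\to\cdots\to P^0]$. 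You instead propose to stay in the derived category, splice the Yoneda $n$-extension together by iterated octahedra, and match connecting morphisms. This is sound, and the pushout is indeed tailored so that the dimension-shift isomorphism $\Hom_{\underline{\mod}\L^e}(\Omega^n_{\L^e}(\L),\L)\cong\Ext^n_{\L^e}(\L,\L)$ returns $h$. The trade-off is that the octahedral route requires more careful tracking of shifts and signs (which you flag but do not carry out), whereas the paper's explicit-chain-map route is a one-step diagram chase; a somewhat cleaner version of your argument would skip the iterated octahedra entirely and simply observe that the middle complex $E=[M_h\to P^{-n+2}\to\cdots\to P^0]$ fits into a triangle $\L[n-1]\to E\to \L\extto{h}\L[n]$ (monomorphism of complexes with cokernel quasi-isomorphic to $\L$), which directly exhibits $E$ as $\L\kos h[-1]$.
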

\begin{proof}
Fix a minimal projective resolution $\mathbb{P}\colon \cdots \to
P^{-n}\to P^{-n+1}\to \cdots \to P^{-1}\to P^0\to \L\to 0$ of $\L$
over $\L^e$. 

As an object in $\bfD^{b}(\B)$ the stalk complex $\L$ is
quasi-isomorphic to the truncated complex $\mathbb{P}_t\colon \cdots
\to P^{-n}\to P^{-n+1}\to \cdots \to P^{-1}\to P^0\to 0\cdots$. Then
$h$ is a map from $\mathbb{P}_t$ to $\L$ in degree $n$, and $\L\kos h$
is given by
\[\cdots\to P^{-n-1}\to P^{-n}
\extto{\left(\begin{smallmatrix} h\\ d^{-n}\end{smallmatrix}\right)} 
\L\amalg P^{-n+1}\extto{(0~d^{-n+1})} P^{-n+2}\cdots \to P^{-1}\to P^0\to
0\cdots.\]
We have the following commutative diagram
\[\xymatrix@C=30pt{%
\cdots\ar[r] & 
P^{-n}\ar[r]\ar@{=}[d] &
\Omega^n_{\L^e}(\L)\ar[r]\ar[d] & 
0 &
&
&
\\
\cdots\ar[r] & 
P^{-n}\ar[r]^-{\left(\begin{smallmatrix} h\\
      d^{-n}\end{smallmatrix}\right)} \ar[d] &
\L\amalg P^{-n+1}\ar[r]^-{(0~d^{-n+1})}\ar[d] &
P^{-n+2}\ar[r]\ar@{=}[d] & 
\cdots\ar[r] &
P^0\ar[r]\ar@{=}[d] &
0\\
\cdots \ar[r] & 
0\ar[r] &
M_h\ar[r] &
P^{-n+2}\ar[r] & 
\cdots\ar[r] &
P^0\ar[r] &
0
}\]
where the maps are coming from the pushout diagram
\[\xymatrix{
0\ar[r] &
\Omega^n_{\L^e}(\L) \ar[r]\ar[d]^h &
P^{-n+1}\ar[r]\ar[d] &
\Omega^{n-1}_{\L^e}(\L) \ar[r]\ar@{=}[d] &
0\\
0\ar[r] &
\L\ar[r] &
M_h\ar[r] &
\Omega^{n-1}_{\L^e}(\L) \ar[r] &
0}\]
Since the upper sequence is acyclic, $\L\kos h$ is quasi-isomorphic to
the complex in the lower row. It is clear from the above pushout
diagram that the homology in degrees $0$ and $n$ are isomorphic to
$\L$.
\end{proof}

Using this we show that $\L\kos h\otimes_\L X$ and $M_h\otimes_\L X$
have the same varieties.

\begin{lem}\label{lem:complexbimod}
\sloppy Let $X$ be in $\bfD^{b}(\mod\L)$, and let $h\colon \L\to \L[n]$ be in 
$\Hom_{\bfD^{b}(\B)}(\L,\L[n])$  for $n\geqslant 1$. Then 
\[V(\L\kos h\otimes_\L X)=V(M_h\otimes_\L X).\]
\end{lem}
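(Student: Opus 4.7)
The plan is to use the explicit description of $\L\kos h$ provided by the preceding lemma and reduce the computation of $V(\L\kos h\otimes_\L X)$ to that of $V(M_h\otimes_\L X)$ modulo a perfect object.

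First, by the preceding lemma, $\L\kos h$ is isomorphic in $\bfD^b(\B)$ to the complex
\[
C\colon \cdots 0\to M_h\to P^{-n+2}\to\cdots\to P^0\to 0\cdots
\]
with $M_h$ placed in degree $-n+1$. Since the action of $\bfD^b(\B)$ on $\bfD^b(\mod\L)$ is exact in the first variable (the objects of $\B$ are projective on either side, so tensoring preserves quasi-isomorphisms), the isomorphism passes to $\L\kos h\otimes_\L X\cong C\otimes_\L X$ in $\bfD^b(\mod\L)$, hence $V(\L\kos h\otimes_\L X)=V(C\otimes_\L X)$.

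Next, I would exploit the stupid truncation. Let $C'$ denote the subcomplex of $C$ consisting of the projective part $P^{-n+2}\to\cdots\to P^0$. The short exact sequence $0\to C'\to C\to M_h[n-1]\to 0$ of complexes yields a triangle in $\bfD^b(\B)$, and tensoring over $\L$ with $X$ produces a triangle
\[
C'\otimes_\L X\to C\otimes_\L X\to (M_h\otimes_\L X)[n-1]\to (C'\otimes_\L X)[1]
\]
in $\bfD^b(\mod\L)$. The key observation is that $C'\otimes_\L X$ is a perfect complex: each $P^i$ is a direct summand of $\L\otimes_k\L$, so each term $P^i\otimes_\L X^j$ is a summand of $\L\otimes_k\L\otimes_\L X^j\simeq \L\otimes_k X^j$, which is a finitely generated free $\L$-module; boundedness of both $C'$ and $X$ then makes $C'\otimes_\L X$ a bounded complex of finitely generated projective $\L$-modules. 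By the earlier identification of perfect objects with perfect complexes in $\bfD^b(\mod\L)$, we conclude that $V(C'\otimes_\L X)\subseteq V(H^+)$, and the same holds for its suspension.

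Applying the triangle property of support varieties (Proposition \ref{prop:elempropvariety}) to the displayed triangle yields
\[
V(C\otimes_\L X)\subseteq V(H^+)\cup V(M_h\otimes_\L X),\qquad V(M_h\otimes_\L X)\subseteq V(C\otimes_\L X)\cup V(H^+).
\]
Since $H^0$ is local artinian, the point $V(H^+)$ is contained in the support variety of every object of $\A$ (as used in the proof of the connectedness proposition in Section~\ref{section:6}). Therefore both inclusions simplify to the equality $V(C\otimes_\L X)=V(M_h\otimes_\L X)$, and combining this with $V(\L\kos h\otimes_\L X)=V(C\otimes_\L X)$ completes the proof. The main subtlety is making the comparison $\L\kos h\otimes_\L X\simeq C\otimes_\L X$ rigorous in view of the fact that $M_h$ need not itself lie in $\B$; this is essentially dealt with by the construction in the previous lemma, where $C$ arises as the quasi-isomorphic image of an explicit complex of objects in $\B$, so that the identification survives application of the exact action functor $-\otimes_\L X$.
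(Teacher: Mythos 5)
Your proposal is correct and follows essentially the same route as the paper's proof: both exhibit a triangle (yours by stupid truncation of $C$, the paper's as the cone of $\beta_h\otimes\id_X$ -- the same triangle up to rotation and shift) whose third vertex is a bounded complex of projectives, note that this vertex is a perfect object, and conclude the equality of varieties by the triangle inequality together with the fact that $V(H^+)$ is contained in every nonempty support variety. Your justification that $C'\otimes_\L X$ is a bounded complex of finitely generated projective $\L$-modules is a small, useful elaboration the paper leaves implicit.
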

\begin{proof}
Let $X\colon \cdots 0 \to X^0\extto{d^0} X^1\to \cdots \to X^t\to 
0\cdots$ be in $\bfD^{b}(\mod\L)$, and let $h\colon \L\to \L[n]$ be in 
$\Hom_{\bfD^{b}(\B)}(\L,\L[n])$.

Recall that $\L\kos h\colon \cdots 0\to M_h\extto{\beta_h} P^{-n+2}\to
\cdots \to P^{-1}\to P^0\to 0\cdots$. Let $P\colon \cdots 0 \to
P^{-n+1}\to \cdots \to P^0\to 0$.

Then 
\[M_h\otimes_\L X\extto{\beta_h\otimes\id_{X}} P\otimes_\L X
\to \L\kos h\otimes_\L X\to \]
is a triangle with $\cone(\beta_h\otimes\id_{X})\simeq
\L\kos h\otimes_\L X$. Since $P\otimes_\L X$ is a perfect
complex, it follows that $V(M_h\otimes_\L
X)=V(\L\kos h\otimes_\L X)$. 
\end{proof}
We end by showing how any object in $\bfD^{b}(\mod\L)$ can be
reduced to a perfect complex by tensoring with the special bimodules
$M_h$ introduced above.
\begin{prop}
Let $X$ be in $\bfD^{b}(\mod\L)$. Suppose that $\cx(X)=d$. 
\begin{enumerate}[\rm(a)]
\item There exist $d$ homogeneous elements $h_1$,
$h_2$,\ldots, $h_d$ in $H$ such that
$M_{h_1}\otimes_\L\cdots\otimes_\L M_{h_d}\otimes_\L X$ is
isomorphic to a perfect complex. 
\item If $\L$ is selfinjective, then as an object in
  $\bfK^{-,b}(\mod\L)$ the complex $M_{h_1}\otimes_\L\cdots\otimes_\L
  M_{h_d}\otimes_\L X$ is isomorphic to a direct sum of a perfect
  complex and an acyclic complex.
\end{enumerate}
\end{prop}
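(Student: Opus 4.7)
The plan is to use an iterated Koszul construction to reduce $X$ to a perfect complex: the $h_i$ will be chosen as a homogeneous system of parameters so that the support variety gets cut down to $V(H^+)$; in (b), selfinjectivity lets us upgrade the resulting derived isomorphism to a splitting in $\bfK^{-,b}(\mod\L)$.

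For (a), Proposition \ref{prop:elemproperties}(c) gives $\dim V(X) = \cx(X) = d$. Setting $I = \Ann_H\Hom_{\bfD^{b}(\mod\L)}^*(X,X)$, the quotient $H/I$ is a finitely generated graded-commutative algebra of Krull dimension $d$ over the local artinian ring $H^0$, and therefore admits a homogeneous system of parameters: homogeneous elements $h_1,\ldots,h_d \in H^{\geqslant 1}$ whose images in $H/I$ cut the Krull dimension to zero. Equivalently,
\[ V(\langle h_1,\ldots,h_d\rangle) \cap V(X) \subseteq V(H^+). \]
An induction on $d$ combining Proposition \ref{prop:intersecvar}, Lemma \ref{lem:complexbimod} and the identification $(\frake\kos h)*Y \simeq Y\kos h$ (applied successively with $Y = X,\ M_{h_d}\otimes_\L X,\ M_{h_{d-1}}\otimes_\L M_{h_d}\otimes_\L X,\ldots$) yields
\[ V\bigl(M_{h_1}\otimes_\L\cdots\otimes_\L M_{h_d}\otimes_\L X\bigr) = V(\langle h_1,\ldots,h_d\rangle)\cap V(X) \subseteq V(H^+). \]
Hence $Y := M_{h_1}\otimes_\L\cdots\otimes_\L M_{h_d}\otimes_\L X$ is a perfect object of $\bfD^{b}(\mod\L)$, so by the preceding proposition it is isomorphic in $\bfD^{b}(\mod\L)$ to a perfect complex $P$.

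For (b), assume $\L$ is selfinjective, so each finitely generated projective $\L$-module is also injective; in particular every term of $P$ is both projective and injective. Since $P$ is a bounded above complex of projectives, $\Hom_{\bfD^{b}(\mod\L)}(P,-) = \Hom_{\bfK^{-,b}(\mod\L)}(P,-)$, so the isomorphism $P \simeq Y$ in $\bfD^{b}(\mod\L)$ lifts to a quasi-isomorphism $f\colon P \to Y$ in $\bfK^{-,b}(\mod\L)$. Dually, since $P$ is a bounded complex of injectives, $\Hom_{\bfD^{b}(\mod\L)}(-,P) = \Hom_{\bfK^{-,b}(\mod\L)}(-,P)$, so $f^{-1}\in\Hom_{\bfD^{b}(\mod\L)}(Y,P)$ lifts to $g\colon Y \to P$ in $\bfK^{-,b}(\mod\L)$. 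The equality $g\circ f = \id_P$ holds in $\bfD^{b}(\mod\L)$, and applying once more $\Hom_{\bfD^{b}(\mod\L)}(P,P) = \Hom_{\bfK^{-,b}(\mod\L)}(P,P)$ shows it already holds in $\bfK^{-,b}(\mod\L)$. Thus $f$ is a split monomorphism in $\bfK^{-,b}(\mod\L)$ with cofiber $A$ acyclic (being the cofiber of a quasi-isomorphism), giving the desired splitting $Y \cong P \oplus A$.

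The main obstacle is the two-sided lifting in part (b): producing a genuine splitting in $\bfK^{-,b}(\mod\L)$ rather than merely in $\bfD^{b}(\mod\L)$ requires invoking projectivity of the terms of $P$ in one direction and injectivity in the other, and it is precisely here that selfinjectivity of $\L$ enters essentially. For part (a) the subtle point is that the graded system of parameters must lie in $H^+$, which is guaranteed because $H^0$ is local artinian and the $h_i$ can be taken in $H^{\geqslant 1}$.
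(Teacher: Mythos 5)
Your argument is correct and follows essentially the same route as the paper's proof: choose $h_1,\ldots,h_d \in H^{\geqslant 1}$ forming a homogeneous system of parameters for $H/A(X,X)$, use Proposition \ref{prop:intersecvar} together with $(\frake\kos h)*Y \simeq Y\kos h$ and Lemma \ref{lem:complexbimod} to see that the support variety of the iterated tensor lies in $V(H^+)$, and conclude perfection via the preceding proposition. The one genuine addition is that you actually spell out part (b): the paper dismisses it with ``the claims in (a) and (b) follow from this,'' whereas you supply the argument that selfinjectivity makes the terms of $P$ both projective and injective, so that the isomorphism $P\simeq Y$ in $\bfD^{b}(\mod\L)$ lifts in both directions to $\bfK^{-,b}(\mod\L)$ and $gf = \id_P$ already holds there, giving the split triangle $Y\cong P\oplus\cone(f)$ with $\cone(f)$ acyclic. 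This is the right argument and fills a gap the paper leaves to the reader.
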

\begin{proof}
  Suppose that $\cx(X)=d$. Then there exist $d$ homogeneous
  elements $h_1$, $h_2$,\ldots, $h_d$ in $H^{\geqslant 1}$
  such that $\dim H/\langle
  h_1,h_2,\ldots,h_d,A(X,X)\rangle=0$. By Proposition~\ref{prop:intersecvar} the variety
  $V(\L\kos h_1\otimes_\L\cdots\otimes_\L \L\kos h_d\otimes_\L X)$ is
  equal to the variety of the ideal $\langle
  h_1,h_2,\ldots,h_d,A(X,X)\rangle$, hence contained in $V(H^+)$. Using
  Lemma~\ref{lem:complexbimod} the complex 
  $M_{h_1}\otimes_\L\cdots\otimes_\L M_{h_d}\otimes_\L X$ is
  perfect. The claims in (a) and (b) follow from this.
\end{proof}

\end{document}